\renewcommand{\|}{|}
\renewcommand{\ae}{a.e.\@\xspace}
\newcommand{\rrho}{\rho}
\newcommand{\U}{\ensuremath{\mathscr U}}
\newcommand{\F}{\ensuremath{\mathscr F}}
\renewcommand{\P}{\ensuremath{\mathscr P}}
\newcommand{\RR}{R}
\newcommand{\ee}{\delta_1}
\newcommand{\eee}{\delta_2}
\newcommand{\rr}{r_b}
\newcommand{\rrr}{\iota}
\newcommand{\w}{\omega}
\newcommand{\x}{x}
\newcommand{\X}{\ensuremath{X}}
\renewcommand{\d}{\ensuremath{\partial}}
\newcommand{\mc}{\mathcal}
\newcommand{\alphlist}{\begin{list}{(\alph{enumi})}{\usecounter{enumi}}}
\newcommand{\romanlist}{\begin{list}{(\roman{enumi})}{\usecounter{enumi}}}
\newcommand{\listend}{\end{list}}
\renewcommand{\:}{\colon}
\newcommand{\ssq}{\ensuremath{\subseteq}}
\newcommand{\eps}{\ensuremath{\varepsilon}}
\newcommand{\lam}{\ensuremath{\lambda}}
\newcommand{\T}{\ensuremath{\mathbb{T}}}
\newcommand{\N}{\ensuremath{\mathbb{N}}} 
\newcommand{\R}{\ensuremath{\mathbb{R}}}
\newcommand{\Z}{\ensuremath{\mathbb{Z}}}
\newcommand{\I}{\ensuremath{\mathcal I}}
\newcommand{\A}{\ensuremath{\mathcal A}}
\newcommand{\artanh}{\ensuremath{\operatorname{artanh}}}
\newcommand{\Leb}{\ensuremath{\operatorname{Leb}}}
\theoremstyle{plain} 
\newtheorem{thm}{Theorem}[section]
\newtheorem{lem}[thm]{Lemma}
\newtheorem{prop}[thm]{Proposition}
\newtheorem{claim}{Claim}[thm]
\theoremstyle{definition}
\newtheorem{defn}[thm]{Definition}
\theoremstyle{remark}
\newtheorem*{rem}{Remark}
\numberwithin{equation}{section}
\numberwithin{thm}{section}
\newcommand*{\rom}[1]{\expandafter\@slowromancap\romannumeral #1@}
\renewcommand*{\@fnsymbol}[1]{\ensuremath{\ifcase#1\or \dagger\or *\or \ddagger\or
   \mathsection\or \mathparagraph\or \|\or **\or \dagger\dagger
   \or \ddagger\ddagger \else\@ctrerr\fi}}
\title{Non-smooth saddle-node bifurcations \rom{3}: strange attractors in continuous time}
\author{G. Fuhrmann\thanks{Institute of Mathematics, FSU Jena, Germany. Email: {\tt gabriel.fuhrmann@uni-jena.de}}}
\date{\today}
\begin{document}
\maketitle
\begin{abstract}
Non-smooth saddle-node bifurcations give rise to minimal sets of interesting geometry built of so-called strange non-chaotic attractors.
We show that certain families of quasiperiodically driven logistic differential equations undergo a non-smooth bifurcation.
By a previous result on the occurrence of non-smooth bifurcations in forced discrete time dynamical systems,
this yields that within the class of families of quasiperiodically driven differential equations,
non-smooth saddle-node bifurcations occur in a set with non-empty $\mc C^2$-interior.
\end{abstract}
\section{Introduction}
The logistic differential equation is a model for single-species populations and as such it is certainly among the most famous ode's from mathematical biology.
Much research has been carried out in order to understand the behaviour of a single species in a fluctuating environment both in the applied sciences as well as in pure math 
\cite{Jilson1980,rosenblat1980,bardimartino1983,arrigonisteiner1985,Cushing1986289,vancecoddington1989,doi:10.1080/10236190108808308,
c39ac8e4237042f6a279ada6481f080c,brauersanchez2003,xuboycedaley2005,castilho20071,bravermanmamdani2008,Liu2010730}.
Having in mind that tidal effects--which result from the gravitational interplay of the moon and the sun--are almost surely quasiperiodic,
it is particularly desirable to understand the effect of quasiperiodic forcing when seeking an understanding of long-term ecological behaviour
(see, e.g. \cite{sonechkinivachtchenko2001,spies2007}).
However, up to now, there are few studies taking into account quasiperiodic forcing of the logistic equation in continuous time 
(see the discussion below Theorem~\ref{thm: main existence continuous time}).

An intriguing feature of quasiperiodically forced systems is the occurrence of strange non-chaotic attractors (SNA's).
In the discrete time setting,
the mechanism behind the creation of SNA's as well as their geometry are well-understood \cite{Keller,JagerETDS,JagerAMS,Jager,GrogerJager,fuhrmann2014,fuhrmanngrogerjager14}.
In \cite{fuhrmann2014}, the author has shown that within the class of $\mc C^2$-families of quasiperiodically forced (qpf) monotone interval maps, SNA's occur in a 
set with non-empty interior (see Theorem~\ref{thm: existence of sna discrete time} below).
While the occurrence of strange attractors in systems with low dynamical complexity (in the sense of zero entropy) is with no doubt a fascinating fact in itself,
it seems worth remembering that a main motivation for their study actually came from ode's driven at incommensurate frequencies \cite{GOPY}.
As a matter of fact, the first examples of SNA's happened to be encountered in a 
continuous time setting \cite{Vinograd,millionscikov,koltyzhenkov1987,lipsnitskii} (for a discussion, see also \cite{johnson1982,jorbaetal2007}).
Nonetheless, all examples in the continuous time setting are basically 
projective actions of linear cocycles \cite{Vinograd,millionscikov,koltyzhenkov1987,lipsnitskii,bjerklovcontinuoustime}.\footnote{We should remark that in this linear setting, 
the existence of SNA's is equivalent to the \emph{non-uniform hyperpolicity} of the respective cocycle, a property extensively studied in the 
literature. Results closely related to the ones of the present work can be found (though again, for discrete time systems) in \cite{herman1983,young1997,bjerklov2007}.}

A natural setting for the creation of SNA's are non-smooth saddle-node bifurcations of one-parameter families of 
driven one-dimensional systems (see Section~\ref{sec: saddle-node bif}).
Here, they occur as the outcome of the collision of two continuous invariant curves.
The present work shows that
the property of undergoing such a non-smooth bifurcation has--analogously to the discrete time case--non-empty interior in the $\mc C^2$-topology in the class 
of qpf families of 
one-dimensional ode's (see Theorem~\ref{thm: main existence continuous time}).

The proof of this--in a sense--abstract fact has a consequence which is important from the applied point of view introduced above:
our core idea is to consider the logistic differential equation with quasiperiodic additive forcing and reduce its dynamics--by means of a suitable Poincar\'e section--to those 
of qpf maps that verify the assumptions of Theorem~\ref{thm: existence of sna discrete time}, that is, to maps for which there exists an SNA.
Now, an easy argument shows that the respective reduced system possesses an SNA if and only if the original system does (see Section~\ref{sec: continuous time preliminaries}). 
While the main work thus happens to be the rather technical analysis of the qpf logistic ode (carried out in Section~\ref{sec: logistic ode}), 
the robustness of non-smooth bifurcations in the continuous time case
comes as a by-product of the application of Theorem~\ref{thm: existence of sna discrete time}.
Furthermore, with little extra effort, we can carry over the geometric findings from the discrete time setting in \cite{fuhrmanngrogerjager14} to the present situation 
(see Theorem~\ref{t.main}).

Our main results are contained in Section~\ref{sec: main results}.
Their proofs can be found in the last section of this article.
In the remainder of the current section, we introduce some basic notation and review some facts and definitions from non-autonomous bifurcation theory, fractal geometry
and the discrete time analogue to what we consider in this work.

\paragraph{Acknowledgements.}
I would like to thank Tobias J{\"a}ger for introducing me to the problem and for his helpful remarks on an earlier version of this manuscript.
This work was supported by an Emmy-Noether-Grant of the German Research Council
(DFG grant JA 1721/2-1) and is part of the activities of the Scientific Network ``Skew product dynamics and multifractal analysis'' (DFG grant OE 538/3-1).
\subsection{Setting and Notation}\label{sec: setting and notation}
Throughout this article, let $\X\ssq \R$ be a non-degenerate interval (possibly non-compact), $\T=\R/\Z$, and $D\geq2$.
Given a \emph{non-autonomous vector field}, that is, a map $F\: \T^D\times\X \to \R$,
we study \emph{(local) skew product flows} or, more precisely, \emph{forced one-dimensional (local) flows} of the form
\begin{align}\label{eq: skew product flow preliminaries}
\Xi \: U\ssq \R\times\T^D\times\X \to \T^D\times\X,   \qquad
(t,\theta,x) \mapsto (t\cdot\rho+\theta,\xi(t,\theta,x)),
\end{align}
where $\rho\in \R^D$ and $U$ is the domain of $\xi$ which is the unique (under mild assumptions) maximal solution of
\begin{align}\label{eq: fibre ode general}
\d_t \xi(t,\theta,x)= F\left(t\cdot\rho+\theta, \xi(t,\theta,x)\right)
\end{align}
with $\xi(0,\theta,x)=x$ for each $(\theta,x) \in \T^D\times\X$.
$\T^D$ is called the \emph{base space} or simply \emph{base} of the flow $\Xi$ in \eqref{eq: skew product flow preliminaries}.
We say the differential equation \eqref{eq: fibre ode general} as well as the flow $\Xi$ are \emph{driven by $\rho$}.
Given $\rho$, we may further say $\Xi$ is \emph{generated} by $F$.

We throughout assume that $\rho$ satisfies the following slow recurrence condition.
\begin{defn}\label{def: diophantine continuous}
We say $\rho\in \R^D$ is \emph{Diophantine (of type $(\mathscr C,\eta)$)} if there are $\mathscr C>0$ and $\eta\in \R$ such that
\begin{align*}
 \forall k \in \Z^D \setminus \{0\}\:\left|\sum_{i=1}^D\rho_i k_i \right|\geq \mathscr C |k|^{-\eta}.
\end{align*}
\end{defn}
\begin{rem}
Given $\eta_0>D+1$, it is well-known that almost every $\rho\in \R^D$ is Diophantine of type $(\mathscr C,\eta_0)$ for some $\mathscr C>0$.
\end{rem}

We assume $F$ to be $\mc C^2$ in the following. 
In particular, this implies existence and uniqueness of $\xi(\cdot,\theta,x)$ for all $(\theta,x)\in \T^D\times \X$ (on some non-degenerate time-interval containing $0$)
due to the Picard-Lindel{\"o}f Theorem (see, e.g. \cite[Chapter~\rom{2}, Theorem~1.1 \& 3.1]{Hartman}).
Note that this yields
\begin{align}\label{eq: cocycle property}
 \xi(t+\tau,\theta,x)=\xi(t,\theta+\tau\rho,\xi(\tau,\theta,x)).
\end{align}

Reversing time, we introduce 
\begin{align}\label{eq: xi inverse}
\xi^-\:(t,\theta,x)\mapsto \xi(-t,\theta,x) 
\end{align}
which obviously solves 
\eqref{eq: fibre ode general} with the right-hand side replaced by \begin{align}\label{eq: inverse non-autonomous vectorfield}
F^-\left(t\cdot\rrho^-+\theta, \xi^-(t,\theta,x)\right),
\end{align}
where $\rrho^-=-\rrho$ and $F^-=-F$.
Note that $\xi^-(t,t\cdot\rrho+\theta,\xi(t,\theta,x))=x$ because of \eqref{eq: cocycle property}. 

Although it is standard, we want to mention that throughout this article, we slightly abuse notation by occasionally not distinguishing elements or subsets of $\T^d$ from such of its cover $\R^d$ ($d\in\N$).
For example, we write $|\theta-\theta'|$ for the distance of $\theta,\theta'\in \T^d$ in the metric inherited from the Euclidean norm $|\cdot|$ in $\R^d$.
Further, we identify the tangent space of $\T^d$ at any $\theta$ with $\R^d$.
When speaking of directional derivatives of $\xi(t,\cdot,x)$, we actually have in mind the respective derivatives of a lift of $\xi$ (see \cite[Definition~A.1.19]{KatokHasselblatt}).
In this sense, given $\vartheta\in \R^d\setminus \{0\}$, it is clear what is to be understood by $\d_\vartheta \xi(t,\theta,x)$.
Higher derivatives are denoted and understood analogously.
Typically, we consider directional derivatives with respect to a vector $\vartheta$ with $|\vartheta|=1$ and write $\vartheta \in \mathbb S^{d-1}$ in this case.

\subsection{Non-autonomous saddle-node bifurcations}\label{sec: saddle-node bif}
An \emph{invariant graph} of a local skew-product flow $\Xi$--we might occasionally speak of invariant graphs of $F$ if $F$ is the corresponding non-autonomous vector field and $\rho$ is fixed--is a measurable
function $\phi \:\T^D\to \X$ such that its graph $\Phi=\{(\theta,x)\: x=\phi(\theta)\}$ is invariant under $\Xi$, or equivalently,
\begin{align*}
 \xi\left(t,\theta,\phi(\theta)\right)=
\phi(t \cdot\rrho+\theta) \quad (t\in\R).
\end{align*}
By a slight abuse of notation, we refer by invariant graph to both the map $\phi$ as well as the corresponding point set 
$\Phi=\{(\theta,x)\in\T^D\times \X\:x=\phi(\theta)\}$ which we throughout denote by a capital letter.
We identify invariant graphs which coincide $\Leb_{\T^D}$-almost everywhere,
where $\Leb_{\T^D}$ denotes  Lebesgue measure on $\T^D$.

Associated to each invariant graph $\phi$, there is an invariant ergodic measure $\mu_\phi$ given by
$\mu_\phi(A)=\Leb_{\T^D}(\pi_1(A\cap\Phi))$ for each Borel set $A\ssq\T^D\times\X$, where
$\pi_1:\T^D\times \X\ni(\theta,x)\mapsto \theta$ denotes the canonical projection to the base coordinate.
In fact, the converse is true as well: to each ergodic measure $m$ there is an invariant graph $\phi$ such that $m=\mu_\phi$ (see \cite[Theorem~1.8.4]{arnoldrandomdynamical}). 
Hence, studying the invariant graphs of $\Xi$ amounts to studying its ergodic measures.

Whether an invariant graph $\phi$ attracts or repels nearby orbits, is determined by its \emph{Lyapunov exponent}
\begin{align*}
\lam(\phi)= 1/t\cdot \int_{\T^D}\!\log\left|\d_x \xi\left(t,\theta,\phi(\theta)\right)\right| \,d\theta,
\end{align*} 
which is easily seen to be independent of the particular choice for $t>0$.
If $\lam(\phi)<0$, then $\phi$ is attracting and if $\lam(\phi)>0$, then $\phi$ is repelling (see \cite[Proposition~3.3]{JagerNonLin} for a precise statement).

Denote the set of non-autonomous $\mc C^2$-vector-fields on $\T^D\times \X$ by $\F(\X)$ (keeping the dimension $D$ implicit).
The set of $\mc C^2$-one-parameter families in $\mathscr F(\X)$ is denoted by
\begin{align*}
 \P(\X)=\left\{\left. {\left(F_\beta\right)}_{\beta\in[0,1]} \right | F_\beta\in \, \F(\X) \text{ for all } \beta\in[0,1] \text{ and } (\beta,\theta,x)\mapsto F_\beta(\theta,x) \text{ is } \mc C^2\right\}.
\end{align*}
We may denote elements of $\P(\X)$ also by $\hat {F}={(F_\beta)}_{\beta\in[0,1]}$.
We endow $\P(\X)$ with the 
extended metric
\begin{align*}
 d\big(\hat F,\hat G\big)=\sup_{\substack{(\theta,x)\in \T^D\times \X\\ \beta\in[0,1]}} \sum_{\substack{s_1,s_2,s_3\in\{0,1,2\}\\ s_1+s_2+s_3\leq2}}
\big|\d_\beta^{s_1}\d_\theta^{s_2}\d_x^{s_3}F_\beta(\theta,x)-\d_\beta^{s_1}\d_\theta^{s_2}\d_x^{s_3}G_\beta(\theta,x)\big|.
\end{align*}
With $\tilde d=d/(1+d)$, we may consider $\P(\X)$ a metric space and refer to the respective topology as $\mathcal{C}^2$-topology in all of the following.

In this article, we study local bifurcations of invariant graphs, that is, given $\hat F\in \P(\X)$ we investigate bifurcations of the corresponding graphs 
${(\phi_\beta)}_{\beta\in[0,1]}$ which are assumed to be entirely contained\footnote{We say an invariant graph $\phi$ is \emph{entirely contained} in some set $\Gamma$ if there is a representative $\tilde \phi$
in the equivalence class of $\phi$ whose graph satisfies $\tilde \Phi\ssq\Gamma$.} 
in a compact section $\Gamma=\T^D\times[\gamma^-,\gamma^+]\ssq\T^D\times\X$. 
In particular, we are interested in saddle-node bifurcations, that is, we study ``collisions'' of an attracting with a repelling graph.
A natural setting for these collisions to occur is the subset $\mathscr S(\X)\ssq \P(\X)$ where
each ${(F_\beta)}_{\beta\in[0,1]}\in \mathscr S(\X)$ satisfies the following assumptions for
all $\beta\in[0,1]$ and $\theta\in\T^D$ if applicable
\begin{enumerate}[${(\mathscr S}1)$]
	\item $F_\beta(\theta,\gamma^+) \leq 0 \text{ and } F_\beta(\theta,\gamma^-) \leq  0$;\label{axiom: S0}
 	\item in $\Gamma$, there exist two invariant continuous graphs for $F_0$ but no invariant graph for $F_1$; \label{axiom: S1}
	\item $\d_\beta F_\beta(\theta,x)\leq0$ and there is $\theta_0$ such that $\partial_\beta F_{\beta}(\theta_0,x) < 0$ for all  $x\in[\gamma^-,\gamma^+]$;\label{axiom: S2}
	\item $\d_x^2 F_\beta(\theta,x)<0$ for all $x\in [\gamma^-,\gamma^+]$.\label{axiom: S3}
\end{enumerate}
Here, $(\mathscr S\ref{axiom: S0})$--$(\mathscr S\ref{axiom: S2})$ guarantee that the two initial invariant graphs approach each other monotonously and collide (for a detailed discussion, see \cite{Anagnostopoulou}).
Assumption $(\mathscr S\ref{axiom: S3})$ ensures that there are at most two distinct invariant graphs and that the two invariant graphs of $F_0$ are attracting and repelling, respectively 
\cite[Theorem~2.1]{Anagnostopoulou}.
Note that $\mathscr S(\X)$ has non-empty interior in the $\mc C^2$-topology \cite[Theorem~3.1]{stark1997}.

The next statement describes the possible outcomes of saddle-node bifurcations in the present non-autonomous setting.

\begin{thm}[{cf. \cite[Theorem~3.1]{nunez}, \cite[Theorem~7.1]{Anagnostopoulou}}]
\label{thm: saddle-node}
Fix $\rrho\in \R^D$ and ${(F_\beta)}_{\beta\in[0,1]}\in \mathscr S(\X)$. 
There exists a unique critical parameter $\beta_c\in(0,1)$ such that the following holds.
\romanlist
\item If $\beta<\beta_c$, then $F_\beta$ has two invariant graphs
  $\phi^-_\beta<\phi^+_\beta$ in $\Gamma$, both of which are
  continuous. 
  Further, $\lambda(\phi^-_\beta)>0$ and $\lambda(\phi^+_\beta)<0$. 
\item If $\beta>\beta_c$, then $F_\beta$ has no invariant graphs in
  $\Gamma$.
\item If $\beta=\beta_c$, then one of the following two alternatives
  holds.
\alphlist
\item[\underline{\em Smooth bifurcation:}]  $F_{\beta_c}$ has a
  unique invariant graph $\phi_{\beta_c}$ in $\Gamma$, which satisfies
  $\lambda(\phi_{\beta_c})=0$. Either $\phi$ is continuous, or it contains both
  an upper and lower semi-continuous representative in its equivalence
  class.\footnote{We call an invariant graph \emph{continuous} if it allows for a continuous representative and similarly, we call it
  \emph{semi-continuous} if it allows for a semi-continuous representative. 
  Observe that hence, we call an invariant graph \emph{non-continuous} if there is no 
  continuous representative.}
\item[\underline{\em Non-smooth bifurcation:}] $F_{\beta_c}$ has
  exactly two invariant graphs $\phi^-_{\beta_c}<\phi^+_{\beta_c}$ \ae in
  $\Gamma$. The graph $\phi^-_{\beta_c}$ is lower semi-continuous,
  whereas $\phi^+_{\beta_c}$ is upper semi-continuous, but none of the
  graphs is continuous and there exists a residual set
  $\Omega\ssq\T^d$ such that
  $\phi^-_{\beta_c}(\theta)=\phi^+_{\beta_c}(\theta)$ for all
  $\theta\in\Omega$. \listend \listend
\end{thm}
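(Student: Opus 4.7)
The plan is to set $\beta_c$ as the supremum of parameters in $[0,1]$ at which $F_\beta$ admits an invariant graph in $\Gamma$, to construct the attracting and repelling graphs for $\beta<\beta_c$ as monotone iterates of the boundaries $\gamma^\pm$, and to analyze the critical case $\beta=\beta_c$ by passing to monotone limits. The dichotomy at $\beta_c$ hinges on whether the two semi-continuous limit graphs agree Lebesgue-almost everywhere or not.

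Let $\mathcal B=\{\beta\in[0,1]: F_\beta \text{ admits an invariant graph in } \Gamma\}$. Then $0\in\mathcal B$ and $1\notin\mathcal B$ by $(\mathscr S\ref{axiom: S1})$, and $\mathcal B$ is downward closed by $(\mathscr S\ref{axiom: S2})$ together with the comparison theorem for ordinary differential equations. Thus $\beta_c:=\sup\mathcal B\in(0,1)$. For $\beta<\beta_c$, an attracting graph arises as the pointwise limit
\begin{align*}
\phi^+_\beta(\theta)=\lim_{t\to\infty}\xi(t,\theta-t\rrho,\gamma^+),
\end{align*}
which exists and is monotone decreasing in $t$ by $F_\beta(\cdot,\gamma^+)\leq 0$; it is therefore upper semi-continuous, and its invariance follows from \eqref{eq: cocycle property}. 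The repelling graph $\phi^-_\beta$ is obtained by the analogous construction applied to the reversed flow $\xi^-$ starting from $\gamma^-$. By strict concavity $(\mathscr S\ref{axiom: S3})$, no third invariant graph can lie between $\phi^-_\beta$ and $\phi^+_\beta$; the variational equation combined with concavity then forces $\lambda(\phi^+_\beta)<0$ and $\lambda(\phi^-_\beta)>0$, while hyperbolicity yields continuity of both graphs via standard perturbation arguments (cf. \cite{stark1997}). This proves (i); note that the two graphs remain strictly separated throughout $[0,\beta_c)$, since a collision would contradict the definition of $\beta_c$.

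Monotonicity of $F_\beta$ in $\beta$ forces $\phi^+_\beta$ to be non-increasing and $\phi^-_\beta$ non-decreasing in $\beta$, so the pointwise limits $\phi^\pm_{\beta_c}(\theta):=\lim_{\beta\uparrow\beta_c}\phi^\pm_\beta(\theta)$ exist. Continuous dependence of $\xi$ on parameters shows these are invariant for $F_{\beta_c}$, with $\phi^+_{\beta_c}$ upper semi-continuous and $\phi^-_{\beta_c}$ lower semi-continuous, and by $(\mathscr S\ref{axiom: S3})$ there are no further invariant graphs at $\beta_c$. Since any invariant graph for some $\beta>\beta_c$ would persist under decreasing $\beta$, no such graph can exist, giving (ii). At $\beta=\beta_c$: if $\phi^-_{\beta_c}=\phi^+_{\beta_c}$ Lebesgue-almost everywhere, we are in the smooth case (a); otherwise, consider the coincidence set $\Omega:=\{\theta: \phi^-_{\beta_c}(\theta)=\phi^+_{\beta_c}(\theta)\}$, which is a $G_\delta$ subset of $\T^D$ (by semi-continuity of $\phi^+_{\beta_c}-\phi^-_{\beta_c}$) that is invariant under the minimal rotation by $\rrho$; hence $\Omega$ is either empty or residual, and nonemptiness yields case (b). The main obstacle is precisely this last step -- establishing that $\Omega$ is nonempty in the non-smooth alternative; this requires exploiting the vanishing of the Lyapunov exponents at $\beta_c$ together with concavity along orbits, and is carried out in detail in \cite{nunez, Anagnostopoulou}.
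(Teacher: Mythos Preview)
The paper does not prove this theorem; it is quoted from \cite{nunez,Anagnostopoulou} and used as a black box. So there is no ``paper's proof'' to compare against, and your sketch should be read as an attempt to reconstruct the argument from those references.

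Your overall architecture is the standard one and is sound: define $\beta_c$ via the supremum, build $\phi^\pm_\beta$ as monotone pull-back limits of $\gamma^\pm$, and pass to monotone limits at $\beta_c$. The concavity argument for the signs of the Lyapunov exponents when two distinct graphs exist is correct, and you are right to flag the nonemptiness of $\Omega$ as the genuinely hard step and to defer it.

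There are, however, two real gaps short of that. First, the sentence ``a collision would contradict the definition of $\beta_c$'' is not justified as written: if $\phi^+_{\beta_0}=\phi^-_{\beta_0}$ at some $\beta_0<\beta_c$, you still have an invariant graph, so $\beta_0\in\mathcal B$ and nothing is contradicted. What you need is that a \emph{unique} (hence neutral) invariant graph cannot persist to any $\beta>\beta_0$; this is where the strict part of $(\mathscr S\ref{axiom: S2})$ is used in an essential way, and the argument (roughly: integrate the strict decrease along the orbit through $\theta_0$ to force the graph below $\gamma^-$) is not entirely trivial. Second, in the non-smooth alternative you never argue that $\phi^\pm_{\beta_c}$ are \emph{not} continuous. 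The point is that if, say, $\phi^+_{\beta_c}$ were continuous, then together with $\lambda(\phi^+_{\beta_c})<0$ (which does follow from concavity once the graphs are a.e.\ distinct) it would be uniformly hyperbolic and hence persist for $\beta$ slightly above $\beta_c$, contradicting (ii). Likewise, you assert $\lambda(\phi_{\beta_c})=0$ in the smooth case without explanation; the same persistence-under-hyperbolicity reasoning is what rules out $\lambda\neq 0$.
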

The graphs appearing in a non-smooth bifurcation are the main theme of this article.
\begin{defn}
 A non-continuous invariant graph $\phi$ is called a \emph{strange non-chaotic attractor (SNA)} if $\lam(\phi)<0$; it is called a \emph{strange non-chaotic repeller (SNR)} if $\lam(\phi)>0$.
\end{defn}
\subsection{Basic notions from fractal geometry}
We will describe the geometry of the SNA's that arise in a non-smooth bifurcation in terms of some concepts from fractal geometry, which we
introduce in this paragraph.

Let $Y$ be a metric space. 
For $\varepsilon>0$,
we call a finite or countable collection $\{A_i\}$ of subsets of $Y$
an {\em $\varepsilon$-cover} of $A$ if $|A_i|\leq\varepsilon$ for each
$i$ and $A\subseteq\bigcup_i A_i$.
\begin{defn}
For $A\subseteq Y$, $s\geq 0$ and $\varepsilon>0$, we define
\[
\mathcal H_\varepsilon^s(A)=\inf\left\{\left.\sum\limits_i
    \left|A_i\right|^s \ \right|\ \{A_i\}\text{ is an
    $\varepsilon$-cover of $A$}\right\}
\]
and call
\[
	\mathcal H^s(A)=\lim\limits_{\varepsilon\to 0} \mathcal H_\varepsilon^s(A)
\]
the \emph{$s$-dimensional Hausdorff measure} of $A$. The
\emph{Hausdorff dimension} of $A$ is defined by
\[
	D_H(A)=\sup\{s\geq 0 \mid \mathcal H^s(A)=\infty\}.
\]
\end{defn}
\begin{rem}
 Notice that $D_H$ is obviously monotone, that is, if $A\subseteq B$, then $D_H(A)\leq D_H(B)$.
\end{rem}

\begin{defn}
  The \emph{lower} and \emph{upper box-counting dimension} of a
  totally bounded subset $A\subseteq Y$ are defined as
\begin{align*}
  \underline D_B(A)=\liminf\limits_{\varepsilon\to 0}\frac{\log N(A,\varepsilon)}{-\log\varepsilon},\\
  \overline D_B(A)=\limsup\limits_{\varepsilon\to 0}\frac{\log
    N(A,\varepsilon)}{-\log\varepsilon},
\end{align*}
where $N(A,\varepsilon)$ is the smallest number of sets of diameter at
most $\varepsilon$ needed to cover $A$.  If $\underline
D_B(A)=\overline D_B(A)$, then we call their common value $D_B(A)$ the
\emph{box-counting dimension} (or \emph{capacity}) of $A$.
\end{defn}
\begin{lem}[{\cite[Corollary~2.4]{falconer2003}}] \label{lem: lipschitz image hausdorff dimension}
  Let $Y$ and $Z$ be two metric spaces and assume that $g:A\subseteq
  Y\to Z$ is a bi-Lipschitz continuous map. Then $D_H(g(A))=D_H(A)$.
\end{lem}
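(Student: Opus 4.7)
The plan is to establish the standard Lipschitz estimate $\mathcal{H}^s(f(A)) \leq L^s \mathcal{H}^s(A)$ for any Lipschitz $f$ with constant $L$, and then apply it to both $g$ and $g^{-1}$.

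First, I would fix $s \geq 0$ and $\varepsilon > 0$, and let $\{A_i\}$ be any $\varepsilon$-cover of $A$. Replacing $A_i$ with $A_i \cap A$ if necessary, I may assume $A_i \subseteq A$. If $f \: A \to Z$ is Lipschitz with constant $L > 0$, then $\{f(A_i)\}$ covers $f(A)$ and each piece satisfies $|f(A_i)| \leq L |A_i| \leq L \varepsilon$. Hence $\{f(A_i)\}$ is an $L\varepsilon$-cover of $f(A)$ and
\[
 \mathcal{H}^s_{L\varepsilon}\bigl(f(A)\bigr) \;\leq\; \sum_i |f(A_i)|^s \;\leq\; L^s \sum_i |A_i|^s.
\]
Taking the infimum over all $\varepsilon$-covers $\{A_i\}$ and letting $\varepsilon \to 0$ yields
\[
 \mathcal{H}^s\bigl(f(A)\bigr) \;\leq\; L^s\, \mathcal{H}^s(A).
\]

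Since $L > 0$, this inclusion gives $\{s \geq 0 : \mathcal{H}^s(f(A)) = \infty\} \subseteq \{s \geq 0 : \mathcal{H}^s(A) = \infty\}$, and hence $D_H(f(A)) \leq D_H(A)$ by taking suprema in the definition given in the paper.

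Finally, I would apply this observation twice. Since $g$ is bi-Lipschitz, both $g \: A \to g(A)$ and its inverse $g^{-1} \: g(A) \to A$ are Lipschitz (with some constants $L, L' > 0$). The first application yields $D_H(g(A)) \leq D_H(A)$, while the second, applied to $g^{-1}$ and the set $g(A)$, yields $D_H(A) = D_H(g^{-1}(g(A))) \leq D_H(g(A))$. Combining the two inequalities gives the claimed equality. There is no real obstacle here; the only minor point requiring care is the passage from the Lipschitz estimate on Hausdorff measure to the statement about Hausdorff dimension in the paper's slightly non-standard formulation, but this is handled by the set-inclusion argument above.
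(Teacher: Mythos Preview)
Your argument is correct and is precisely the standard textbook proof; this is essentially what appears in Falconer's book at the cited location. Note that the paper itself does not give a proof of this lemma at all --- it is quoted as a known result with a reference to \cite{falconer2003} --- so there is no ``paper's own proof'' to compare against beyond the source you have reproduced.
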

\begin{thm}[{\cite[Corollary~12]{Howroyd1996} \& \cite[Corollary~4]{Howroyd1995}}]\label{thm: Hausdorff dimension product sets}
  Suppose $Y$ and $Z$ are two metric spaces and consider the Cartesian
  product space $Y\times Z$ equipped with the maximum metric. Then for
  $A\subseteq Y$ and $B\subseteq Z$ totally bounded, we have
\begin{align*}
	D_H(A\times B)\leq D_H(A)+\overline D_B(B)\qquad \text{and} \qquad
D_H(A\times B)\geq D_H(A)+D_H(B).
\end{align*}
\end{thm}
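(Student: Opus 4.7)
The plan is to establish the two inequalities separately using standard techniques from fractal geometry: a direct covering argument for the upper bound, and a mass-distribution-principle argument for the lower bound.

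First I would prove $D_H(A\times B)\leq D_H(A)+\overline D_B(B)$ by a direct covering computation exploiting the product structure of balls in the maximum metric. Fix any $s>D_H(A)$ and $t>\overline D_B(B)$, and let $\varepsilon>0$ be small. By the definition of upper box-counting dimension, $N(B,\delta)\leq \delta^{-t}$ for all sufficiently small $\delta$. I would then choose an $\varepsilon$-cover $\{A_i\}$ of $A$ with $\sum_i|A_i|^s$ close to $\mathcal H^s_\varepsilon(A)$ (which is finite since $s>D_H(A)$), and, for each $i$, cover $B$ by at most $|A_i|^{-t}$ sets $B_{i,j}$ of diameter at most $|A_i|$. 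In the maximum metric each $A_i\times B_{i,j}$ has diameter $\leq|A_i|\leq\varepsilon$, so $\{A_i\times B_{i,j}\}$ is an $\varepsilon$-cover of $A\times B$ with
\[
\sum_{i,j}|A_i\times B_{i,j}|^{s+t}\;\leq\;\sum_i|A_i|^{-t}\cdot|A_i|^{s+t}\;=\;\sum_i|A_i|^s.
\]
Taking infima and letting $\varepsilon\to 0$ gives $\mathcal H^{s+t}(A\times B)<\infty$, hence $D_H(A\times B)\leq s+t$; letting $s\searrow D_H(A)$ and $t\searrow \overline D_B(B)$ finishes this half.

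For the lower bound $D_H(A\times B)\geq D_H(A)+D_H(B)$ I would apply a Frostman-type lemma to each factor and combine the resulting measures. For any $s<D_H(A)$ one obtains a nontrivial finite Borel measure $\mu_A$ supported on $A$ with $\mu_A(B(a,r))\leq C_A r^s$ for all $a$ and $r$; analogously one gets $\mu_B$ on $B$ for any $t<D_H(B)$. The product $\mu_A\otimes\mu_B$ is then a nontrivial Borel measure on $A\times B$, and since open balls in the maximum metric factorise as products of balls,
\[
(\mu_A\otimes\mu_B)\bigl(B((a,b),r)\bigr)\;=\;\mu_A(B(a,r))\,\mu_B(B(b,r))\;\leq\;C_AC_B\,r^{s+t}.
\]
The mass distribution principle then yields $\mathcal H^{s+t}(A\times B)>0$, so $D_H(A\times B)\geq s+t$, and letting $s\nearrow D_H(A)$ and $t\nearrow D_H(B)$ completes the argument.

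The hard part is securing the Frostman-type measures in the lower bound: while this is classical for analytic subsets of Euclidean space (or of a suitably nice Polish space), in the general metric-space setting it is genuinely nontrivial and requires Howroyd's net-measure-plus-compactness construction, which is precisely the substance of the cited references. The upper-bound half, by contrast, is a routine covering estimate once one notices that the maximum metric makes $\diam(A_i\times B_{i,j})=\max(\diam A_i,\diam B_{i,j})$, so that the $s$- and $t$-scales decouple cleanly.
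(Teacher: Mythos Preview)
The paper does not prove this statement; it is quoted from Howroyd's work without argument, so there is no in-paper proof to compare against. Your sketch is correct and standard: the upper bound is the elementary product-cover computation (essentially as in Falconer's textbook), and the lower bound is the Frostman/mass-distribution argument applied to the product measure. You also correctly isolate the only genuinely nontrivial point---obtaining Frostman measures on arbitrary metric spaces rather than on analytic subsets of Euclidean space---and rightly attribute this to Howroyd's work, which is precisely why the paper cites it rather than proving it.
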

\begin{lem}[\cite{falconer2003}]\label{lem: box dimension} 
 Given a totally bounded set with well-defined box-counting dimension $D_B(A)$, we have $D_B(A)=D_B\left(\overline A\right)$.
 Moreover, if $A\ssq \R^d$ and $\textrm{Leb}_{\R^d}(A)>0$, then $D_B(A)=d$.
\end{lem}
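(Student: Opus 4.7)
The plan is to handle the two assertions separately; both are classical, so I would briefly indicate the proofs rather than develop them in full.

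For the first assertion, I would show that $N(A,\varepsilon)=N(\overline A,\varepsilon)$ for every $\varepsilon>0$, from which the equality of both the lower and upper box-counting dimensions follows immediately by plugging into the definitions. The inequality $N(\overline A,\varepsilon)\geq N(A,\varepsilon)$ is trivial, as any cover of $\overline A$ is in particular a cover of $A$. For the reverse inequality, the key point is that diameter is preserved under taking closures: if $\{A_i\}_{i=1}^N$ is a finite collection of subsets of $Y$ with $|A_i|\leq\varepsilon$ and $A\subseteq\bigcup_i A_i$, then the closures $\overline{A_i}$ satisfy $|\overline{A_i}|=|A_i|\leq\varepsilon$, and the set $\bigcup_i\overline{A_i}$ is closed (being a finite union of closed sets) and contains $A$, hence also $\overline A$. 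Therefore $\{\overline{A_i}\}$ is an $\varepsilon$-cover of $\overline A$ of the same cardinality.

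For the second assertion, the upper bound $\overline D_B(A)\leq d$ is standard: since $A$ has positive Lebesgue measure it is in particular bounded (after possibly truncating to a set of positive measure inside a large cube, which can only decrease the dimension), and a bounded subset of $\R^d$ can be covered by $O(\varepsilon^{-d})$ sets of diameter $\varepsilon$ (for instance by intersecting with a grid of cubes of sidelength $\varepsilon/\sqrt d$). For the lower bound, I would use the isodiametric-style estimate $\operatorname{Leb}_{\R^d}(B)\leq c_d |B|^d$ valid for every bounded $B\subseteq\R^d$, where $c_d$ is a dimensional constant. If $\{A_i\}$ is any $\varepsilon$-cover of $A$, then
\begin{align*}
0<\operatorname{Leb}_{\R^d}(A)\leq \sum_i\operatorname{Leb}_{\R^d}(A_i)\leq c_d\sum_i|A_i|^d\leq c_d\, N(A,\varepsilon)\,\varepsilon^d,
\end{align*}
so $N(A,\varepsilon)\geq c_d^{-1}\operatorname{Leb}_{\R^d}(A)\,\varepsilon^{-d}$, which gives $\underline D_B(A)\geq d$ upon taking the $\liminf$ as $\varepsilon\to 0$.

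There is essentially no obstacle here; the only point that requires a moment of care is that diameter is preserved under closure in the first part, and that a covering by sets of diameter $\leq\varepsilon$ entails a volume bound via the trivial isodiametric-type inequality in the second. Since this lemma is explicitly attributed to \cite{falconer2003}, I would present it as a brief remark referencing those arguments rather than reproducing them.
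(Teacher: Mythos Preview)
The paper does not supply its own proof of this lemma; it is stated with a citation to \cite{falconer2003} and left unproved. Your argument is correct and is essentially the standard one found in that reference: closures preserve diameter (giving $N(A,\varepsilon)=N(\overline A,\varepsilon)$), and a Lebesgue-volume bound against the covering number yields the lower estimate $\underline D_B(A)\geq d$. One small remark: your parenthetical about truncating to a bounded subset is unnecessary here, since the paper's definition of box-counting dimension already requires $A$ to be totally bounded, hence bounded in $\R^d$; but this does not affect the validity of what you wrote.
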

\begin{defn}
  For $d\in\N$, we call a Borel set $A\subseteq Y$ \emph{countably
    $d$-rectifiable} if there exists a sequence of Lipschitz
  continuous functions $(g_i)_{i\in\N}$ with $g_i:A_i\subseteq\R^d\to
  Y$ such that $\mathcal H^d(A\backslash\bigcup_i g_i(A_i))=0$. A
  finite Borel measure $\mu$ is called \emph{$d$-rectifiable} if
  $\mu=\Theta\mathcal H^d\!\!\restriction_A$ for some countably
  $d$-rectifiable set $A$ and some Borel measurable density
  $\Theta:A\to[0,\infty)$.
\end{defn}
\subsection{Existence and geometry of SNA's of forced interval maps}\label{sec: skew product maps}
In this section, we review the situation for discrete time systems.
Let us thus consider \emph{qpf monotone interval maps}
 \begin{align}\label{eq: skew-product}
 f\: \T^d\times \X \to \T^d\times \X, \quad\qquad
 (\theta,x)\mapsto \left(\theta+\w,f_{\theta}(x)\right),
\end{align}
where $d\in\N$, $\X$ is as above, $f_{\theta}(\cdot)$ is monotonously increasing for each fixed $\theta$, and $\w\in\T^d$ satisfies the following slow-recurrence condition 
(compare to Definition~\ref{def: diophantine continuous}).
\begin{defn}\label{def: diophantine}
We say $\w\in\T^d$ is \emph{Diophantine (of type $(\mathscr C,\eta)$)} if there are $\mathscr C>0$ and $\eta\in \R$ such that
\begin{align*}
 \forall k \in \Z^d \setminus \{0\}\:\sup_{p\in\Z}\left|\sum_{i=1}^d\w_i k_i+p \right|\geq \mathscr C |k|^{-\eta}.
\end{align*}
\end{defn}

We call $f_{\theta}(\cdot)$ the \emph{fibre map} corresponding to $\theta \in \T^d$. 
The notions of invariant graphs, the associated invariant measures, and SNA's are analogously defined as in the continuous-time case, with
the Lyapunov exponent of an invariant graph $\tilde\phi$ given by
\begin{align*}
\int_{\T^d}\!\log\left|\d_x f_{\theta}\left(\tilde\phi(\theta)\right)\right| \,d\theta.
\end{align*}

\begin{thm}[cf. \cite{fuhrmann2014}]\label{thm: existence of sna discrete time}
Let $\X\ssq\R$ be a non-degenerate interval, suppose $\omega \in \T^d$ is Diophantine and consider the space of one-parameter families
\begin{align*}
{\mc P}_\omega(\X)=\left\{\left(f_\beta\right)_{\beta \in[0,1]}\:  [0,1]\times\T^d\times \X\ni(\beta,\theta,x)\mapsto \left(\theta+\rho,f_{\beta,\theta}(x)\right) \text { is }\mc C^2 \right\}
\end{align*} 
equipped with the $\mc C^2$-metric.\footnote{Which is similarly defined as in the continuous time case.}
There exists a non-empty open set ${\mc U}_\omega(\X)\ssq {\mc P}_\omega(\X)$ such that each ${(f_\beta)}_{\beta\in[0,1]}\in {\mc U}_\omega(\X)$ admits an SNA and an SNR for some $\beta_c\in(0,1)$.
\end{thm}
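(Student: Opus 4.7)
The plan is to locate the desired open set $\mathcal{U}_\omega(\X)$ inside the discrete-time saddle-node setting $\mathscr{S}(\X)\subseteq\mathcal{P}_\omega(\X)$, which is already known to be $\mathcal{C}^2$-open (the discrete analogue of \cite{stark1997} applies verbatim). On $\mathscr{S}(\X)$, the discrete-time version of Theorem~\ref{thm: saddle-node} provides a unique critical parameter $\beta_c$ at which one of the two alternatives---smooth or non-smooth collision---occurs. Since the smooth alternative always forces a single invariant graph with vanishing Lyapunov exponent, it suffices to exhibit a $\mathcal{C}^2$-open family of $(f_\beta)\in\mathscr{S}(\X)$ on which every admissible invariant graph has Lyapunov exponent uniformly bounded away from $0$; this automatically forces the non-smooth branch and produces the required SNA and SNR.

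To construct such a family I would use a GOPY-style (pinched skew product-inspired) recipe: choose fibre maps $f_{\beta,\theta}$ on $[\gamma^-,\gamma^+]$ which are strongly contracting near the upper branch and strongly expanding near the lower branch, with a narrow ``crossover window'' in the base $\T^d$ where the two branches come close. The parameter $\beta$ moves the whole family downward (so that $(\mathscr{S}1)$--$(\mathscr{S}3)$ are satisfied), and the shape is arranged so that for every $\theta$ outside the crossover window both $\log|\d_x f_{\beta,\theta}(\phi^-_\beta(\theta))|$ and $\log|\d_x f_{\beta,\theta}(\phi^+_\beta(\theta))|$ are uniformly bounded away from $0$ with opposite signs. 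Then, given any continuous invariant graph $\phi$ of $f_\beta$ entirely contained in $\Gamma$, $\phi$ must agree with $\phi_\beta^-$ or $\phi_\beta^+$ (by $(\mathscr{S}4)$, i.e.\ concavity, there are at most two such graphs), so it inherits the uniform Lyapunov bound.

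The core step is to make this uniform bound robust as $\beta\to\beta_c$, even as the two graphs collide. The control comes from the Diophantine condition on $\omega$: it forces the base orbit $(\theta+n\omega)_{n\in\N}$ to return to the crossover window with a quantitatively bounded frequency (a polynomial unique-ergodicity estimate). Birkhoff averages of $\log|\d_x f_\beta|$ along orbits of $\phi^\pm_\beta$ can therefore be split into a dominant contribution from the hyperbolic region (of definite sign and uniform size) plus an error from the crossover returns that can be absorbed. This yields $\lambda(\phi^-_\beta)\geq c>0$ and $\lambda(\phi^+_\beta)\leq -c<0$ uniformly in a neighbourhood of $\beta_c$, hence the smooth alternative of Theorem~\ref{thm: saddle-node} is excluded on $\mathcal{U}_\omega(\X)$.

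Finally, $\mathcal{C}^2$-openness follows because all three ingredients are $\mathcal{C}^2$-robust: the axioms $(\mathscr{S}1)$--$(\mathscr{S}4)$ are open conditions; the hyperbolicity estimates on $\log|\d_x f_\beta|$ away from the crossover region depend only on $\mathcal{C}^1$-data and persist; and the return-time control is a property of $\omega$ alone. The main obstacle is the uniform Lyapunov-exponent bound up to $\beta=\beta_c$: near the collision the two graphs approach each other and the fibre derivative along them may degenerate, so the quantitative splitting of the Birkhoff average into a hyperbolic contribution and a small crossover error requires a delicate large-deviation-type argument that exploits the Diophantine recurrence statistics of $\omega$ in tandem with the geometric design of the family.
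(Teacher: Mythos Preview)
This theorem is not proved in the present paper; it is quoted from \cite{fuhrmann2014}, and the paper only \emph{specifies} the set $\mathcal U_\omega(\X)$ through the list of conditions $(\mathcal A1)$--$(\mathcal A15)$ and the refined statement Theorem~\ref{thm: sink-source orbit refined}. So there is no ``paper's own proof'' to compare with in detail, but the paper does reveal enough of the architecture of the actual argument to assess your plan against it.

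Your overall strategy---embed the family in the saddle-node class $\mathscr S(\X)$, invoke the dichotomy of Theorem~\ref{thm: saddle-node}, and rule out the smooth alternative by keeping the Lyapunov exponents bounded away from zero---is the right skeleton. The gap is in the step you yourself flag as the main obstacle. Your proposal is to split the Birkhoff average of $\log|\partial_x f_{\beta,\theta}(\phi^\pm_\beta(\theta))|$ into a hyperbolic bulk and a ``crossover'' error controlled by Diophantine return statistics of a \emph{fixed} window. This does not work as stated: as $\beta\uparrow\beta_c$ the two graphs approach one another, and the set of $\theta$ where $\phi^\pm_\beta(\theta)$ lies in the neutral zone (where $|\partial_x f_{\beta,\theta}|$ is close to $1$) is not a fixed window---it can in principle grow to full measure. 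The Diophantine condition by itself only controls visits to sets of fixed size; it gives no a priori bound on the measure of $\{\theta:\phi^+_\beta(\theta)\approx\phi^-_\beta(\theta)\}$.

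What the actual construction uses, and what is missing from your sketch, is precisely the second-derivative input: condition $(\mathcal A\ref{axiom: 9c})$ demands $\partial_\vartheta^2 f_{\beta,\theta}(x)>s>0$ on a specific convex set $\mathcal J_{0,\beta}\subseteq\mathcal I_0$, together with the geometric constraint $(\mathcal A\ref{axiom: 8c})$ that orbits can pass from $C$ to $E$ \emph{only} through $\mathcal I_0$. These allow a multiscale induction (the ``sink-source orbit'' scheme underlying Theorem~\ref{thm: sink-source orbit refined}) producing nested critical sets $\mathcal I_n$ of exponentially shrinking diameter on which the graphs are allowed to be close; off $\mathcal I_n$ the non-degenerate bump forces them apart. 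The Diophantine condition enters not as a crude return-frequency bound but to guarantee that successive critical sets are disjoint for the required number of iterates (this is the role of the inequality $3|\mathcal I_0|<\mathscr C'(2KM)^{-\eta}$). A bare $\mathcal C^1$ large-deviation estimate cannot substitute for this; the $\mathcal C^2$ geometry of the crossover is essential, and it is why the open set $\mathcal U_\omega(\X)$ is cut out by $\mathcal C^2$ conditions rather than $\mathcal C^1$ ones.
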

We specify the set $\mc U_\omega(\X)$ in Section~\ref{sec: logistic ode} (see Theorem~\ref{thm: sink-source orbit refined}).
It is essentially given by a number of $\mc C^2$-estimates on the map $f$ restricted to a section $\T^d\times[\gamma^-,\gamma^+]$ (with $f_\beta(\gamma^\pm)\leq \gamma^\pm$) 
that contains the SNA/SNR-pair of the previous statement.

We next provide a finer geometric description of the SNA $\tilde\phi_{\beta_c}^+$ that occurs in the previous theorem and its corresponding ergodic measure $\mu_{\tilde\phi_{\beta_c}^+}$.
We moreover give a simple description of the minimal set in the section $\Gamma=\T^d\times [\gamma^-,\gamma^+]$ 
as the \emph{maximal invariant set (in $\Gamma$)}. 
For $\beta\in[0,1]$, this is given by
\begin{equation*}
  \tilde\Lambda_\beta \ = \ \bigcap_{n\in \Z} f^n_{\beta}(\Gamma).
\end{equation*}
Note that $\tilde\Lambda_{\beta_c}\neq\emptyset$. 
It turns out that
\begin{align*}
  \psi_{\tilde\Lambda_{\beta_c}}^-(\theta) \ = \inf \tilde\Lambda_{\beta_c}(\theta) \quad\textrm{ and } 
  \quad \psi_{\tilde\Lambda_{\beta_c}}^+(\theta) \ = \ \sup\tilde\Lambda_{\beta_c}(\theta) 
\end{align*}
are lower and upper semi-continuous representatives of $\tilde\phi_{\beta_c}^+$ and $\tilde\phi_{\beta_c}^-$.\footnote{The semi-continuity
is a general fact (see \cite{stark2003}), the other statement follows from the definition of $\mc U_\w(\X)$ (see \cite{phdfuhrmann2015})}

The next assertion is a crucial ingredient for our geometric analysis.
\begin{prop}[cf. \cite{fuhrmanngrogerjager14}]\label{prop: lipschitz}
Let $\omega$ and ${(f_\beta)}_{\beta\in[0,1]}$ be as in Theorem~\ref{thm: existence of sna discrete time}.
There is an increasing sequence of sets $\tilde\Omega_j\ssq\T^d$ such that $\psi_{\tilde\Lambda_{\beta_c}}^+\!\!\!\restriction_{\tilde\Omega_j}$ is Lipschitz continuous for all $j\in\N$ and
$D_H(\tilde\Omega_\infty)\leq d-1$, where $\tilde\Omega_\infty=\T^d\setminus\bigcup_{j\in\N}\tilde\Omega_j$.
An analogous result holds for the repeller $\psi_{\tilde\Lambda_{\beta_c}}^-$.
\end{prop}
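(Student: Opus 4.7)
The plan is to adapt the strategy of \cite{fuhrmanngrogerjager14} to the present setting, exploiting the negative Lyapunov exponent $\lam^+=\lam(\tilde\phi^+_{\beta_c})<0$ of the attractor in order to control fiberwise contraction along preimages. The stratification of $\T^d$ will be built from the rate at which the backward cocycle derivative realizes $\lam^+$.

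Concretely, for $\theta\in\T^d$ and $n\in\N$ I would introduce the backward Birkhoff sum
\begin{align*}
S_n(\theta) \ = \ \sum_{i=1}^{n} \log\left|\d_x f_{\beta_c,\theta-i\w}\!\left(\psi^+_{\tilde\Lambda_{\beta_c}}(\theta-i\w)\right)\right|.
\end{align*}
Since $\w$ is Diophantine and the ergodic measure $\mu_{\tilde\phi^+_{\beta_c}}$ projects to Lebesgue on $\T^d$, the Birkhoff ergodic theorem (applied to the invertible base dynamics) yields $S_n(\theta)/n\to\lam^+<0$ for Lebesgue-almost every $\theta$. I would then set
\begin{align*}
\tilde\Omega_j \ = \ \{\theta\in\T^d : S_n(\theta) \leq n\lam^+/2 + \log j \text{ for all } n\in\N\},
\end{align*}
so that $(\tilde\Omega_j)_{j\in\N}$ is increasing and its union has full Lebesgue measure.

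To obtain Lipschitz continuity on each $\tilde\Omega_j$, I would fix $\theta,\theta'\in\tilde\Omega_j$ at small distance and use the identity $\psi^+_{\tilde\Lambda_{\beta_c}}(\theta)=f^n_{\beta_c,\theta-n\w}(\psi^+_{\tilde\Lambda_{\beta_c}}(\theta-n\w))$, which follows from invariance of the graph. Splitting the difference $\psi^+_{\tilde\Lambda_{\beta_c}}(\theta)-\psi^+_{\tilde\Lambda_{\beta_c}}(\theta')$ into a fiber contribution (distinct values at the preimages, then pulled forward $n$ steps through $f^n$) and a base contribution (same initial value, varying through the base coordinate), the fiber part contracts by the chain-rule product $\exp(S_n(\theta))\leq j e^{n\lam^+/2}$, while the base part decomposes telescopically into $n$ terms, each involving a partial product of fiber derivatives that is again controlled by the estimate defining $\tilde\Omega_j$ together with the $\mc C^2$-bounds provided by membership in $\mc U_\w(\X)$. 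Choosing $n$ comparable to $|\lam^+|^{-1}\log|\theta-\theta'|^{-1}$ then balances both contributions and yields the desired Lipschitz bound, with constant depending only on $j$.

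The hard part will be the Hausdorff dimension estimate $D_H(\tilde\Omega_\infty)\leq d-1$, as Birkhoff's theorem alone gives only $\Leb_{\T^d}(\tilde\Omega_\infty)=0$. To promote this to a codimension-one bound I would follow \cite{fuhrmanngrogerjager14} and exploit the sink-source geometry implicit in the definition of $\mc U_\w(\X)$: points in $\tilde\Omega_\infty$ must visit the pinched fibres (where attractor and repeller coincide) with an atypical frequency, and this forces $\tilde\Omega_\infty$ to be covered by countably many graphs of Lipschitz functions defined on $(d-1)$-dimensional parameter sets; Lemma~\ref{lem: lipschitz image hausdorff dimension} then yields $D_H(\tilde\Omega_\infty)\leq d-1$. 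The corresponding assertion for the repeller $\psi^-_{\tilde\Lambda_{\beta_c}}$ follows from the time-reversed argument, where the Lyapunov exponent is positive and the Birkhoff sum runs over forward iterates.
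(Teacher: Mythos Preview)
The paper does not actually prove Proposition~\ref{prop: lipschitz}; it is quoted from \cite{fuhrmanngrogerjager14} and used as a black box. So there is no in-paper argument to compare against, and the relevant question is whether your sketch reproduces what that reference does.

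Your Pesin-type stratification via backward Birkhoff sums is a reasonable route to Lipschitz continuity on sets of full Lebesgue measure, but as written it has a gap and, more importantly, it cannot deliver the Hausdorff-dimension bound. For the Lipschitz part, the contraction factor $\exp(S_n(\theta))$ is the derivative of $f^n_{\beta_c,\theta-n\w}$ evaluated \emph{along the graph} $\psi^+_{\tilde\Lambda_{\beta_c}}$, whereas the mean-value theorem hands you a derivative at an intermediate point between $\psi^+_{\tilde\Lambda_{\beta_c}}(\theta-n\w)$ and $\psi^+_{\tilde\Lambda_{\beta_c}}(\theta'-n\w)$; since $\psi^+_{\tilde\Lambda_{\beta_c}}$ is discontinuous, this intermediate point can be far from the graph, and you have not said how $(\mc A\ref{axiom: 1c})$--$(\mc A\ref{axiom: 16c})$ close that gap. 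This is fixable, but it already hints that the correct stratification should be tied to the geometry of $\mc U_\w(\X)$ rather than to Birkhoff averages of a discontinuous observable.

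The decisive problem is the bound $D_H(\tilde\Omega_\infty)\leq d-1$. With your definition, $\tilde\Omega_\infty$ is the set of Birkhoff-atypical points for the (non-continuous) observable $\log|\d_x f_{\beta_c,\theta}(\psi^+_{\tilde\Lambda_{\beta_c}}(\theta))|$, and such sets have no reason to be of codimension one; in multifractal settings they frequently carry full Hausdorff dimension. Your last paragraph effectively switches to a different stratification (``covered by countably many Lipschitz graphs over $(d-1)$-dimensional parameter sets''), but that statement concerns a different exceptional set, not the Birkhoff-sum set you defined. In \cite{fuhrmanngrogerjager14} the sets $\tilde\Omega_j$ are built from the multiscale construction underlying $\mc U_\w(\X)$: they are complements of iterates of the shrinking critical regions $\mc I_n$, on which the attractor sits in the contracting interval $C$ and uniform fibre contraction (from $(\mc A\ref{axiom: 1c})$) gives the Lipschitz bound directly; the super-exponential decay of $|\mc I_n|$ then yields the codimension-one Hausdorff estimate on the residual set. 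If you want a self-contained proof, you should abandon the Birkhoff-sum definition and work with that geometric stratification from the outset.
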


\begin{thm}[cf. \cite{fuhrmanngrogerjager14}]\label{t.main discrete}
  Let $\omega$ and ${(f_\beta)}_{\beta\in[0,1]}$ be as in Theorem~\ref{thm: existence of sna discrete time}. Then the SNA $\psi_{\tilde\Lambda_{\beta_c}}^+$ satisfies the following.
\begin{enumerate}[(i)]
\item 
  $D_B\left(\Psi_{\tilde\Lambda_{\beta_c}}^+\right)=d+1$ and
  $D_H\left(\Psi_{\tilde\Lambda_{\beta_c}}^+\right)=d$.
\item The measure $\mu_{\psi_{\tilde\Lambda_{\beta_c}}^+}$ 
is $d$-rectifiable.
\item $\tilde\Lambda_{\beta_c}$
  is minimal. We have
  $\tilde\Lambda_{\beta_c}=\overline{\Psi_{\tilde\Lambda_{\beta_c}}^-}=
  \overline{\Psi_{\tilde\Lambda_{\beta_c}}^+}$.
\item $\psi_{\tilde\Lambda_{\beta_c}}^+$ 
is the only semi-continuous representative in its equivalence class. 
\end{enumerate}
  Analogous results hold for the repeller $\psi_{\tilde\Lambda_{\beta_c}}^-$.
\end{thm}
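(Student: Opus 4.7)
The proof would essentially be a distillation of the analysis carried out in \cite{fuhrmanngrogerjager14}; the decisive input is Proposition~\ref{prop: lipschitz}, which stratifies the base into countably many pieces $\tilde\Omega_j$ over which $\psi^+_{\tilde\Lambda_{\beta_c}}$ is Lipschitz, with the exceptional set $\tilde\Omega_\infty$ of Hausdorff dimension at most $d-1$. I would treat the four items in the order (iii), (i), (ii), (iv), since (iii) is used twice below.

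For (iii) I would first observe that both $\overline{\Psi^\pm_{\tilde\Lambda_{\beta_c}}}$ are closed invariant subsets of the maximal invariant set $\tilde\Lambda_{\beta_c}$. The concavity-type assumption built into $\mathcal U_\omega(\X)$ (made explicit in Theorem~\ref{thm: sink-source orbit refined}) forces the fibres of $\tilde\Lambda_{\beta_c}$ to be intervals of the form $[\psi^-_{\tilde\Lambda_{\beta_c}}(\theta),\psi^+_{\tilde\Lambda_{\beta_c}}(\theta)]$, which collapse to single points on the residual set $\Omega$. Combining this with the semi-continuity of the two graphs one concludes that $\overline{\Psi^\pm_{\tilde\Lambda_{\beta_c}}}$ meet every fibre of $\tilde\Lambda_{\beta_c}$, and then a standard argument (exploiting that the base rotation is minimal and that every closed invariant subset must project onto all of $\T^d$) yields equality with $\tilde\Lambda_{\beta_c}$ and hence the desired minimality.

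For the upper bound in (i), I would decompose $\Psi^+_{\tilde\Lambda_{\beta_c}}$ into the countable union of its traces over the $\tilde\Omega_j$'s plus the trace over $\tilde\Omega_\infty$. Each trace over $\tilde\Omega_j$ is a bi-Lipschitz image of $\tilde\Omega_j$ under $\theta\mapsto(\theta,\psi^+_{\tilde\Lambda_{\beta_c}}(\theta))$ and hence has Hausdorff dimension at most $d$ by Lemma~\ref{lem: lipschitz image hausdorff dimension}; the remainder sits inside $\tilde\Omega_\infty\times\X$ whose Hausdorff dimension is bounded by $(d-1)+\overline D_B(\X)=d$ via Theorem~\ref{thm: Hausdorff dimension product sets}. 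Countable stability of $D_H$, together with the trivial lower bound $D_H(\Psi^+_{\tilde\Lambda_{\beta_c}})\geq D_H(\T^d)=d$ coming from the Lipschitz projection, gives $D_H(\Psi^+_{\tilde\Lambda_{\beta_c}})=d$. For the box-counting dimension I would use (iii) to identify $\overline{\Psi^+_{\tilde\Lambda_{\beta_c}}}=\tilde\Lambda_{\beta_c}$, and then exploit the explicit $\mathcal C^2$-estimates encoded in $\mathcal U_\omega(\X)$ to show that $\tilde\Lambda_{\beta_c}$ has positive Lebesgue measure; Lemma~\ref{lem: box dimension} then yields $D_B(\Psi^+_{\tilde\Lambda_{\beta_c}})=D_B(\tilde\Lambda_{\beta_c})=d+1$. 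Part (ii) is now immediate: since $D_H(\tilde\Omega_\infty)\leq d-1$ implies $\Leb_{\T^d}(\tilde\Omega_\infty)=0$, the push-forward measure $\mu_{\psi^+_{\tilde\Lambda_{\beta_c}}}$ is supported on the countable union of Lipschitz graphs over the $\tilde\Omega_j$'s, and is therefore $d$-rectifiable by construction. Finally, (iv) falls out of (iii) in a short step: any semi-continuous representative $\tilde\psi$ of the equivalence class satisfies $\tilde\Psi\ssq\tilde\Lambda_{\beta_c}$ by maximality, hence $\tilde\psi\leq\psi^+_{\tilde\Lambda_{\beta_c}}$ pointwise; conversely, density of $\tilde\Psi$ in $\tilde\Lambda_{\beta_c}$ together with upper semi-continuity of $\tilde\psi$ forces the reverse inequality.

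The main obstacle is not any of the dimension bookkeeping, which is essentially forced by Proposition~\ref{prop: lipschitz} once available, but rather the two ``rigidity'' statements: the interval structure of the fibres of $\tilde\Lambda_{\beta_c}$ and the positivity of $\Leb_{\T^d\times\X}(\tilde\Lambda_{\beta_c})$. Both require extracting quantitative information from the explicit definition of $\mathcal U_\omega(\X)$, and cannot be deduced from the soft hypotheses of $(\mathscr S\ref{axiom: S0})$--$(\mathscr S\ref{axiom: S3})$ alone; this is where I expect most of the work to go.
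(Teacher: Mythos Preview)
Your plan is essentially the right one and matches what the paper does: the paper does not prove this theorem at all but cites \cite{fuhrmanngrogerjager14}, adding only the remark that part (ii) and the Hausdorff-dimension half of (i) are direct consequences of Proposition~\ref{prop: lipschitz}. Your outline of (i), (ii) via Proposition~\ref{prop: lipschitz} and of the box-counting dimension via (iii) and Lemma~\ref{lem: box dimension} is exactly how the paper later argues the continuous-time analogue in Section~\ref{sec: geometry}.

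One correction to your closing paragraph: the two ``rigidity'' facts you single out as the main obstacle are in fact almost free. The fibres of $\tilde\Lambda_{\beta_c}=\bigcap_n f_{\beta_c}^n(\Gamma)$ are intervals simply because each $f_{\beta_c,\theta}$ is a monotone interval map, so $f_{\beta_c}^n(\Gamma)$ has interval fibres for every $n$ and nested intersections of intervals are intervals; no concavity or $\mc C^2$-estimates are needed. Positivity of $\Leb_{\T^d\times\X}(\tilde\Lambda_{\beta_c})$ then follows from Fubini together with the fact, guaranteed by the non-smooth alternative in Theorem~\ref{thm: saddle-node}, that $\psi^-_{\tilde\Lambda_{\beta_c}}<\psi^+_{\tilde\Lambda_{\beta_c}}$ almost everywhere. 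The genuinely delicate part of the argument in \cite{fuhrmanngrogerjager14} is rather (iii), where one has to upgrade ``$\overline{\Psi^\pm}$ meets every fibre'' to the full equality $\overline{\Psi^\pm}=\tilde\Lambda_{\beta_c}$; your sketch there is too vague, and the actual proof uses the uniqueness of semi-continuous representatives together with the fact that boundary graphs of closed invariant sets are themselves semi-continuous invariant graphs. Also, in (iv) you only treat the upper semi-continuous case; you should add a line ruling out a lower semi-continuous representative (which would have to coincide with $\psi^-_{\tilde\Lambda_{\beta_c}}$ by the symmetric argument, contradicting $\psi^-_{\tilde\Lambda_{\beta_c}}\neq\psi^+_{\tilde\Lambda_{\beta_c}}$ a.e.).
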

\begin{rem}
 Part (ii) as well as the statement concerning the Hausdorff dimension are direct implications of Proposition~\ref{prop: lipschitz} (cf. \cite[Theorem~3.2]{fuhrmanngrogerjager14}).
\end{rem}

\section{Main results}\label{sec: main results}
The primary work of this article is to show the following.
\begin{thm}\label{thm: main 1}
Let $h$ be a non-increasing $\mc C^2$-bump-function $h\:\R_{\geq 0}\to [0,1]$ with $h'(0)=0$, $h'\!\!\restriction_{(0,\RR)}<0$ 
(for some $\RR>0$), $h''(0)<0$, $h(y)=0$ for all $y\geq\RR$, and $h(0)=1$.
Given Diophantine $\rho\in\R^D$ of type $(\mathscr C,\eta)$, there is $\mathscr R=\mathscr R(|\rho|,\mathscr C,\eta)$ such that the family of qpf skew product flows ${(\Xi_{\beta})}_{\beta\in[0,1]}$ 
driven by $\rho$ and generated by the non-autonomous vector fields
\[
F_\beta(\theta,x)=-bx^2+b-\beta b/(1-b^{-1/2}) \cdot h(\|\theta\|),
\]
undergoes a non-smooth saddle-node bifurcation within $\T^D\times [-1,1]$ if $R\geq \mathscr R$ and $b$ is sufficiently large.
\end{thm}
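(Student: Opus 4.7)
The plan is to follow the route announced in the introduction: pass from the continuous-time family ${(\Xi_\beta)}_{\beta\in[0,1]}$ to a discrete-time family of qpf monotone interval maps via a Poincar\'e section, verify that the reduced family lies in the open set $\mc U_\omega(\X)$ of Theorem~\ref{thm: existence of sna discrete time}, and finally transfer the resulting SNA/SNR pair back to the flow using the equivalence anticipated in Section~\ref{sec: continuous time preliminaries}. Having an SNA and an SNR at our disposal for some $\beta_c\in(0,1)$, Theorem~\ref{thm: saddle-node} then forces a non-smooth saddle-node bifurcation in $\T^D\times[-1,1]$.

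The natural section is the time-$1$ map $\Phi_\beta\colon \T^D\times[-1,1]\to \T^D\times[-1,1]$ given by $(\theta,x)\mapsto(\theta+\rho,\xi_\beta(1,\theta,x))$. This is a qpf monotone interval map: monotonicity follows from uniqueness of the fibre solutions, and invariance of $\T^D\times[-1,1]$ is guaranteed by $F_\beta(\theta,\pm 1)\leq 0$ for $\beta\in[0,1]$ and $b$ large. A short calculation shows that the Diophantine condition on $\rho\in\R^D$ (Definition~\ref{def: diophantine continuous}) transfers, with suitably modified constants depending on $|\rho|$, to the discrete Diophantine condition on its projection $\omega$ to $\T^D$; if need be, one replaces the time-$1$ section by a hyperplane cross-section in $\T^D$, whose first-return map is a rotation by ratios of the $\rho_i$'s on $\T^{D-1}$ and for which the Diophantine transfer is classical.

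The main technical effort goes into showing $(\Phi_\beta)\in\mc U_\omega(\X)$, which according to the paper is specified by explicit $\mc C^2$-estimates in Section~\ref{sec: logistic ode}. Qualitatively, the strategy is this. In the regime $b\to\infty$, outside the support $\{\|\theta\|\leq \RR\}$ of the bump, the fibre ODE $\dot\xi = -b\xi^2 + b$ drives the attracting and repelling graphs exponentially fast (at rate of order $2b$) onto the constant equilibria $\pm 1$. Inside the bump, the additional term $-\beta b/(1-b^{-1/2}) \cdot h(\|\cdot\|)$ pushes these equilibria downwards, and the coefficient $b/(1-b^{-1/2})$ is engineered precisely so that at $\beta=1$, $\theta=0$ the right-hand side becomes strictly negative on $[-1,1]$, i.e.\ the fibre equation loses its zero. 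Picking $\RR$ as a function of $|\rho|,\mathscr C,\eta$ ensures that every base trajectory $\{\theta+t\rho\}_{t\in[0,1]}$ spends enough time away from the bump to yield the required exponential contraction, while the Diophantine condition on $\rho$ provides the controlled recurrence of the trajectory to $\{\|\cdot\|\leq \RR\}$ needed to reconstruct the sink--source structure of Theorem~\ref{thm: sink-source orbit refined}. The assumptions $h'(0)=0$ and $h''(0)<0$ translate into the non-degeneracy of the perturbation near the critical base point $\theta=0$ required by the discrete-time hypotheses.

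The main obstacle will be the quantitative $\mc C^2$-analysis of the time-$1$ map $\Phi_\beta$, whose fibre maps are defined only implicitly via integration of the time-inhomogeneous Riccati equation $\dot\xi = -b\xi^2 + b - g_\beta(t)$ with $g_\beta(t)=\beta b/(1-b^{-1/2})\cdot h(\|\theta+t\rho\|)$. Extracting two-sided estimates on $\xi_\beta(1,\theta,x)$ together with its $\theta$- and $\beta$-derivatives up to order two, uniformly in $(\theta,x,\beta)$ and sharp enough to fit the explicit inequalities defining $\mc U_\omega(\X)$, is delicate and requires combining the variational equation along near-equilibrium orbits with Gronwall-type arguments and the explicit local form of $h$ near $0$. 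Once these bounds are established, Theorem~\ref{thm: existence of sna discrete time} supplies an SNA $\tilde\phi^+_{\beta_c}$ and an SNR $\tilde\phi^-_{\beta_c}$ for $\Phi_{\beta_c}$; pulling them back to the flow yields the corresponding SNA/SNR pair for $F_{\beta_c}$, and the assumptions $(\mathscr S\ref{axiom: S0})$--$(\mathscr S\ref{axiom: S3})$ are straightforward to check for ${(F_\beta)}_{\beta\in[0,1]}$ from the construction, so Theorem~\ref{thm: saddle-node} delivers the non-smooth bifurcation.
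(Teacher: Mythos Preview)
Your overall route---reduce to a discrete qpf interval map via a Poincar\'e section, verify membership in $\mc U_\omega(\X)$, apply Theorem~\ref{thm: existence of sna discrete time}, and pull the SNA/SNR pair back to the flow via Proposition~\ref{prop: invariant graph for time-1 map is invariant for the flow}---is exactly the paper's strategy, and your identification of the quantitative $\mc C^2$-analysis of the fibre solutions as the crux is accurate (this is the content of Section~\ref{sec: logistic ode}, with Lemma~\ref{lem: nondegenerate bump} on the second $\theta$-derivatives being the hardest part).

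Two technical points, however, are wrong as stated. First, the time-$1$ map is not the right section: the continuous Diophantine condition on $\rho\in\R^D$ does \emph{not} in general imply that $\rho\bmod\Z^D\in\T^D$ is Diophantine in the discrete sense (take $\rho=(1,\sqrt 2)$, which is continuous-Diophantine but projects to a non-minimal rotation of $\T^2$). The paper therefore works from the outset with the hyperplane section $\{\theta_D=0\}$ and the first-return map with rotation vector $\omega=(\rho_1/\rho_D,\ldots,\rho_{D-1}/\rho_D)\in\T^{D-1}$, for which the Diophantine transfer is immediate. You mention this as a fallback; it should be the primary choice. Second, your claim that $\T^D\times[-1,1]$ is forward-invariant because $F_\beta(\theta,\pm1)\leq0$ is false: the inequality at $x=-1$ means orbits \emph{exit} through the lower boundary. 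The paper never asserts invariance; instead it restricts to parameters $\beta\in[0,\beta_+]$ so that orbits starting in $C$ stay above $-2$ (Proposition~\ref{prop: expansion in E contraction in C}\eqref{label: claim a}), and the relevant discrete condition is the one-sided $(\mc A\ref{axiom: 4c})$, namely $\tilde\xi_{\beta,\theta}(\gamma^\pm)\leq\gamma^\pm$, not two-sided invariance.
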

In fact, we prove more: we show that for large enough $b$, the family ${(\Xi_{\beta})}_{\beta\in[0,1]}$ can be reduced to 
a family of qpf monotone interval maps which lies in the set $\mc U_\w(\X)$ of Theorem~\ref{thm: existence of sna discrete time} (for appropriate $\w$ and $\X$ [see Section~\ref{sec: poincare map}]).
This yields the following corollary.
\begin{thm}\label{thm: main existence continuous time}
Suppose $\rho\in\R^D$ is Diophantine. Then there is a set
$ \U_\rho(\X) \, \ssq\, \P(\X)$ with non-empty interior in the
$\mathcal{C}^2$-topology such that each family of flows ${(\Xi_\beta)}_{\beta\in[0,1]}$ driven by $\rho$ and generated 
by some ${(F_\beta)}_{\beta\in[0,1]}\in \U_\rho(\X)$ undergoes a non-smooth saddle-node bifurcation.
\end{thm}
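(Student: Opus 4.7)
The plan is to deduce Theorem \ref{thm: main existence continuous time} from Theorem \ref{thm: main 1} together with the $\mc C^2$-openness in Theorem \ref{thm: existence of sna discrete time}, the bridge being a Poincar\'e reduction of the skew flow to a qpf interval map. The explicit family supplied by Theorem \ref{thm: main 1} will serve as a ``seed'' family sitting in the interior of the set $\U_\rho(\X)$ to be constructed.

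First, following the program outlined in Section~\ref{sec: continuous time preliminaries}, I would associate to a family $\hat F=(F_\beta)\in\P(\X)$ (close enough to the seed) a family of qpf monotone interval maps $\hat f=(f_\beta)\in\mc P_\w(\X)$ by choosing a suitable transverse section $\Sigma\ssq\T^D\times\X$ and taking the first return map. Since $\rho$ is Diophantine, one can arrange the base frequency $\w$ on $\Sigma$ to be Diophantine as well. The assignment $\hat F\mapsto\hat f$ is $\mc C^2$-continuous near the seed, because the time-$t$ map of an ODE depends $\mc C^2$-smoothly on the vector field and on initial data, and because on a small $\mc C^2$-neighbourhood the return time stays bounded and uniformly bounded away from zero. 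This step must be carried out in a way that survives perturbations of $\hat F$, which is the main technical hurdle.

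The second, conceptually decisive observation is that $\hat F$ undergoes a non-smooth saddle-node bifurcation if and only if $\hat f$ does. Under the suspension structure, the invariant graphs of the flow $\Xi_\beta$ are in natural bijection with those of the return map $f_\beta$, their Lyapunov exponents agree up to the positive factor given by the mean return time, and semi-continuity, equality $\ae$, and (non-)continuity are preserved by the correspondence. In particular an SNA/SNR pair for $\Xi_{\beta_c}$ corresponds to an SNA/SNR pair for $f_{\beta_c}$, in the sense of Theorem \ref{thm: saddle-node}.

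With these two ingredients in hand, the theorem follows formally. Let $\mc U_\w(\X)\ssq\mc P_\w(\X)$ be the non-empty open set from Theorem \ref{thm: existence of sna discrete time} and set
\[
\U_\rho(\X) \ = \ \bigl\{\hat F\in\P(\X) \, : \, \hat f \in \mc U_\w(\X)\bigr\}\,,
\]
the preimage of $\mc U_\w(\X)$ under the Poincar\'e reduction. Continuity of the reduction in the $\mc C^2$-topology makes $\U_\rho(\X)$ open, and by the equivalence of non-smooth bifurcations for $\hat F$ and $\hat f$, each element of $\U_\rho(\X)$ undergoes a non-smooth saddle-node bifurcation. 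Finally, Theorem \ref{thm: main 1} applied with $b$ large and $R\geq\mathscr R$ produces the seed $\hat F^0$ whose reduction explicitly satisfies the $\mc C^2$-estimates defining $\mc U_\w(\X)$ (as recorded in Theorem~\ref{thm: sink-source orbit refined}); hence $\hat F^0\in\U_\rho(\X)$, so $\U_\rho(\X)$ is non-empty and thus has non-empty $\mc C^2$-interior. The main obstacle is not in this last formal wrap-up but in the Poincar\'e reduction itself: one must ensure that the section can be chosen so that trajectories of every sufficiently close $\hat F$ cross it transversally with uniformly bounded return time, and that the reduced return map satisfies the $\mc C^2$-bounds required to lie in $\mc U_\w(\X)$; this is precisely what the detailed analysis of the qpf logistic equation in Section~\ref{sec: logistic ode} is designed to accomplish.
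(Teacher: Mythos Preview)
Your proposal is correct and follows essentially the same route as the paper: define $\U_\rho(\X)$ as the preimage of $\mc U_\w(\X)$ under the Poincar\'e reduction, use Theorem~\ref{thm: main 1} to exhibit a seed, and invoke $\mc C^2$-continuity of the reduction together with Proposition~\ref{prop: invariant graph for time-1 map is invariant for the flow} for the flow/map correspondence.

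One point is worth simplifying. You flag as the ``main technical hurdle'' that the section must be chosen so that trajectories of all nearby $\hat F$ cross it transversally with uniformly bounded return time. In the paper's setup this worry evaporates: the section is taken purely in the base, $\Sigma=\{\theta_D=0\}\times\X$, and since the base motion $\theta\mapsto\theta+t\rho$ is independent of $F$, transversality is automatic and the return time is the constant $1/\rho_D$ for every $\hat F\in\P(\X)$. (One only needs to modify $F$ outside $\Gamma$ to make orbits globally defined, which is harmless for the local bifurcation.) The genuine technical work is therefore not in setting up the Poincar\'e map but in verifying that the seed's return map actually lands in $\mc U_\w(\X)$---exactly the content of Section~\ref{sec: logistic ode}.
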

Coming back to the particular vector fields of Theorem~\ref{thm: main 1}, observe that we immediately get 
an analogous result for the skew product flow family generated by
\[
 L_\beta(\theta,x)=2/r\cdot bx\cdot(r-x)-\beta b/(1-b^{-1/2})  \cdot h(\|\theta\|),
\]
for some $r>0$. 
In other words, Theorem~\ref{thm: main 1} indeed guarantees the occurrence of a non-smooth saddle-node bifurcation for
the logistic differential equation with a (certain) quasiperiodic harvesting term.

In \cite{bjerklovcontinuoustime}, a similar Riccati equation as in Theorem~\ref{thm: main 1} is considered.
There as well, the existence of SNA's is proven, however, in a regime already beyond the saddle-node bifurcation.
Hence, in the words of the situation considered in \cite{bjerklovcontinuoustime}, 
the novelty of Theorem~\ref{thm: main 1} is that we describe the \emph{transition} from uniform to non-uniform hyperbolicity.
On a technical level, this difference is most visibly reflected in the fact that in the present work we have to cope
with second derivatives of the flow.

Note that by the application of Theorem~\ref{thm: existence of sna discrete time},
our approach focusses on the geometry of the mechanism by which SNA's are created.
This geometric insight shows that
even if in general, analytical results on the occurrence of non-smooth bifurcations for particular ode's might still be subject to rather technical considerations, 
the proof in Section~\ref{sec: logistic ode} should basically be extendable to situations with 
non-autonomous vector fields similar to the one above (for example, we believe that it should be possible to replace $h$ by an arbitrary $\mc C^2$-function [on $\T^D$]
with a unique non-degenerate global maximum).
In a nutshell, it is not so much the particular choice of the vector fields $F_\beta$ but the assumption of general 
features--like the concavity of the functions $F_\beta(\theta,\cdot)$ and the decreasing dependence on 
$\beta$--which guarantee a non-smooth saddle-node bifurcation (see Figure~\ref{fig: SNA}).
\begin{figure} 
\centering
\begin{tabular}{c}
\vspace{-35pt} 
\hspace{-20pt}
\subfloat{\includegraphics[scale=0.7]{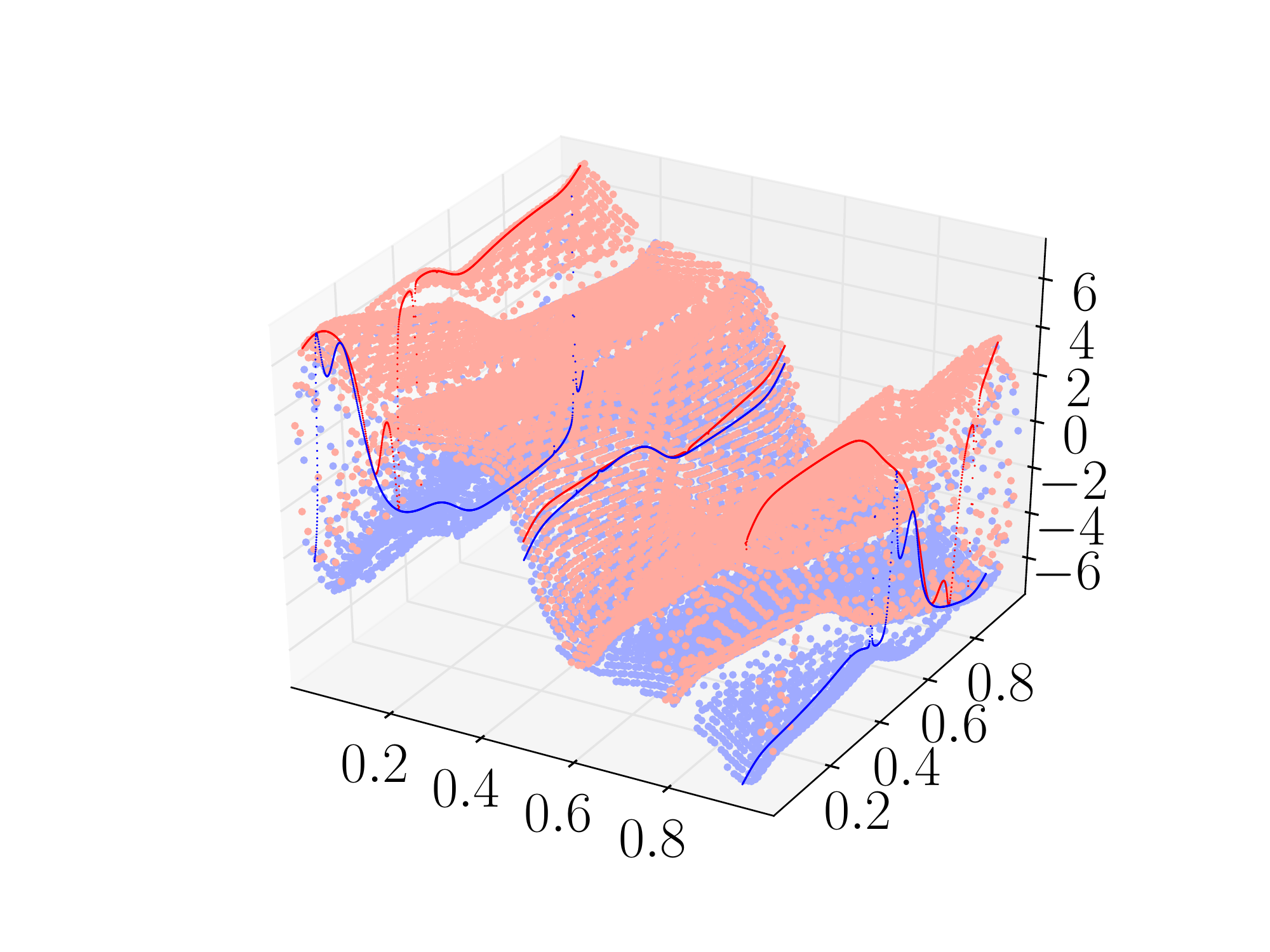}}\\
\raisebox{10mm}{\subfloat{\includegraphics[scale=0.4]{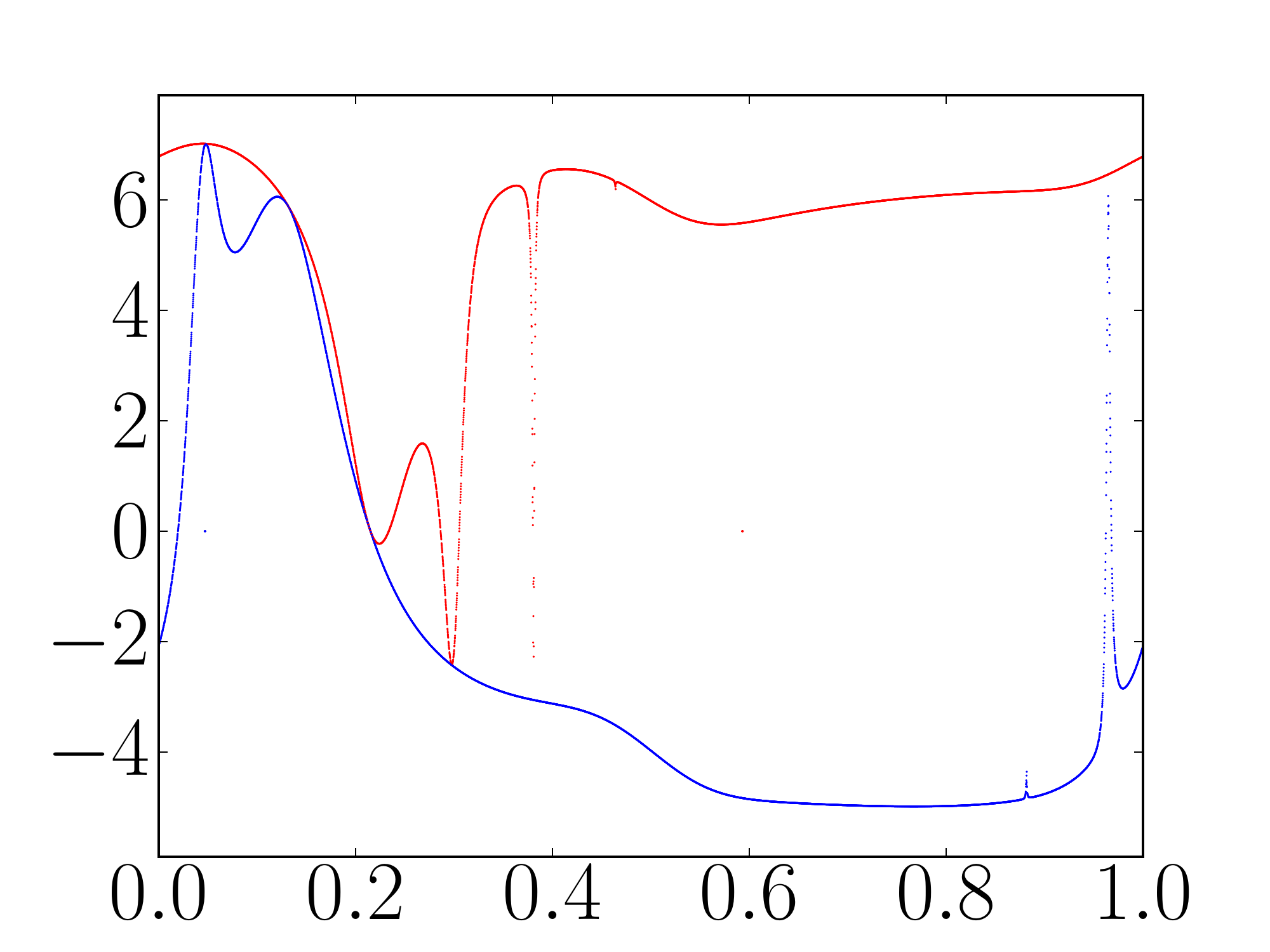}}}
\vspace{-10pt}
\end{tabular}
\caption{Invariant graphs of a skew product flow with $D=2$ close to a non-smooth saddle-node bifurcation. 
The considered family of vector fields is given by $F_\beta(\theta,x)=-x^2+b-\beta\cdot(2-\cos^{11}(2\pi \theta_1)-\cos^{11}(2\pi \theta_2))/4$ where $\theta =(\theta_1,\theta_2)$.
We put $\rrho=((\sqrt{5} - 1)/2,\pi)$, $b=100$, and $\beta=176.01538$.
In the upper picture, the attractor is pale red, the repeller is depicted in pale blue. 
The curves in deep red and blue show slices of the attractor and repeller, respectively, for three different fixed values of $\theta_1$. 
The lower picture allows a closer look at the section close to $\theta_1=0$.} 
\label{fig: SNA}
\end{figure}

Finally, another merit of the reduction to the discrete time setting is that we can--with only a little extra work--carry over Theorem~\ref{t.main discrete} to the continuous time setting.
Hence, we obtain a fairly comprehensive description of the geometry of the SNA and the maximal invariant set in $\Gamma=\T^D\times [\gamma^-,\gamma^+]$ (recall that we are dealing with local bifurcations).
For $\beta\in[0,1]$, this is--similarly to the discrete time case--given by
\begin{equation*}
  \Lambda_\beta \ = \ \bigcap_{t\in\R} \Xi_{\beta}(t,\Gamma).
\end{equation*}
Note that $\Lambda_\beta$ is non-empty for $\beta\leq \beta_c$ due to Theorem~\ref{thm: saddle-node}.
Analogously to the discrete time case, we have that
\begin{align*}
  \phi_{\Lambda_\beta}^-(\theta) \ = \inf \Lambda_{\beta}(\theta) \quad\textrm{ and } 
  \quad \phi_{\Lambda_\beta}^+(\theta) \ = \ \sup\Lambda_{\beta}(\theta) 
\end{align*}
are lower and upper semi-continuous invariant graphs, respectively, and hence representatives of the invariant 
graphs that appear along the saddle-node bifurcation of ${(\Xi_\beta)}_{\beta\in[0,1]}$.

\newpage
\begin{thm}\label{t.main}
  Let $\rho$ and ${(\Xi_\beta)}_{\beta\in[0,1]}$ be as in Theorem~\ref{thm: main existence continuous time}. 
  Then the SNA $\phi_{\Lambda_{\beta_c}}^+$ appearing at the critical parameter $\beta_c$ satisfies the following.
\begin{enumerate}[(i)]
\item 
  $D_B\left(\Phi_{\Lambda_{\beta_c}}^+\right)=D+1$ and
  $D_H\left(\Phi_{\Lambda_{\beta_c}}^+\right)=D$.
\item The measure $\mu_{\phi_{\Lambda_{\beta_c}}^+}$ 
is $D$-rectifiable.
\item $\Lambda_{\beta_c}$
  is minimal. We have  $\Lambda_{\beta_c}=\overline{\Phi_{\Lambda_{\beta_c}}^-}=
  \overline{\Phi_{\Lambda_{\beta_c}}^+}$. \label{item: 3 geometric continuous}
\item $\phi_{\Lambda_{\beta_c}}^+$ 
is the only semi-continuous representative in its equivalence class. 
\end{enumerate}
  Analogous results hold for the repeller $\phi_{\Lambda_{\beta_c}}^-$.
\end{thm}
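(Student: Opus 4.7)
The plan is to reduce everything to Theorem~\ref{t.main discrete} via the Poincaré-section construction alluded to in Section~\ref{sec: continuous time preliminaries} and used in the proof of Theorem~\ref{thm: main existence continuous time}. Concretely, by the construction of $\U_\rho(\X)$, there exists a codimension-one subtorus $\Sigma\ssq\T^D$ transversal to $\rrho$ with constant return time $\tau>0$ such that the induced first-return map $f_{\beta_c}\:\Sigma\times\X\to\Sigma\times\X$ is a qpf monotone interval map lying in $\mc U_\w(\X)$ for the Diophantine rotation vector $\w\in\Sigma\cong\T^d$ ($d=D-1$) induced by $\rrho$. Write $\tilde\Lambda_{\beta_c}$ and $\tilde\Psi^{\pm}_{\tilde\Lambda_{\beta_c}}$ for the corresponding discrete maximal invariant set and its upper/lower bounding graphs supplied by Theorem~\ref{t.main discrete}. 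Since $\xi$ is jointly $\mc C^2$, the suspension map
\[
\Theta\:[0,\tau]\times\Sigma\times\X\to \T^D\times\X,\qquad (t,\sigma,x)\mapsto\Xi_{\beta_c}(t,\sigma,x),
\]
descends under the obvious identification at $t=\tau$ to a $\mc C^2$-diffeomorphism $\hat\Theta$ from the suspension of $f_{\beta_c}$ onto $\T^D\times\X$, and one checks via the cocycle property~\eqref{eq: cocycle property} that $\Lambda_{\beta_c}=\hat\Theta(\mathrm{susp}(\tilde\Lambda_{\beta_c}))$ and $\Phi^{\pm}_{\Lambda_{\beta_c}}=\hat\Theta(\mathrm{susp}(\tilde\Psi^{\pm}))$. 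Locally, $\hat\Theta$ is bi-Lipschitz and identifies $\Phi^+_{\Lambda_{\beta_c}}$ with $\tilde\Psi^+\times[0,\tau]$.

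For (i), Lemma~\ref{lem: lipschitz image hausdorff dimension} reduces the computation of $D_H(\Phi^+_{\Lambda_{\beta_c}})$ to that of $D_H(\tilde\Psi^+\times[0,\tau])$; applying both inequalities of Theorem~\ref{thm: Hausdorff dimension product sets} and using $D_H([0,\tau])=\overline D_B([0,\tau])=1$ yields
\[
D_H\bigl(\Phi^+_{\Lambda_{\beta_c}}\bigr)\;=\;D_H\bigl(\tilde\Psi^+\bigr)+1\;=\;d+1\;=\;D.
\]
For the box dimension, an $\varepsilon$-covering of $\tilde\Psi^+\times[0,\tau]$ has cardinality comparable to $N(\tilde\Psi^+,\varepsilon)\cdot\lceil\tau/\varepsilon\rceil$, so $D_B(\Phi^+_{\Lambda_{\beta_c}})=D_B(\tilde\Psi^+)+1=(d+1)+1=D+1$, where bi-Lipschitz invariance of box dimension is used to transfer the estimate back to the torus chart. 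For (ii), Proposition~\ref{prop: lipschitz} supplies Lipschitz graphs $g_j\:\tilde\Omega_j\to\tilde\Psi^+$ whose complement has Hausdorff dimension at most $d-1$; the maps
\[
(\sigma,t)\;\longmapsto\;\hat\Theta\bigl(t,\sigma,g_j(\sigma)\bigr)
\]
are Lipschitz from subsets of $\R^d\times[0,\tau]\ssq\R^D$ into $\T^D\times\X$ and cover $\Phi^+_{\Lambda_{\beta_c}}$ outside a set of Hausdorff dimension at most $d=D-1$, whence $\mu_{\phi^+_{\Lambda_{\beta_c}}}$ (which is the normalised product of $\mu_{\tilde\psi^+}$ and Lebesgue on $[0,\tau]$ pushed forward by $\hat\Theta$) is $D$-rectifiable.

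For (iii), minimality of $\Lambda_{\beta_c}$ follows from the standard fact that the suspension flow of a minimal homeomorphism over a minimal base with constant return time is itself minimal; the closure identities $\Lambda_{\beta_c}=\overline{\Phi^{\pm}_{\Lambda_{\beta_c}}}$ transfer through $\hat\Theta$ from Theorem~\ref{t.main discrete}(iii). For (iv), any semi-continuous invariant graph $\phi$ of $\Xi_{\beta_c}$ restricts on $\Sigma$ to a semi-continuous invariant graph of $f_{\beta_c}$, which by Theorem~\ref{t.main discrete}(iv) must equal $\psi^+_{\tilde\Lambda_{\beta_c}}$; since $\{\sigma+t\rrho\:\sigma\in\Sigma,\,t\in[0,\tau]\}=\T^D$ and the cocycle property determines $\phi$ along each flow orbit from its value on $\Sigma$, this forces $\phi=\phi^+_{\Lambda_{\beta_c}}$. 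The main technical point is (i): one must exploit the regularity of the flow factor $[0,\tau]$ to obtain \emph{equality} (rather than only inequalities) in the product formulas of Theorem~\ref{thm: Hausdorff dimension product sets} and the box-dimension product estimate; everything else is a fairly direct transport of the discrete-time statements across the smooth diffeomorphism $\hat\Theta$.
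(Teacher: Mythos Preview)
Your proposal is correct and follows essentially the same suspension/Poincar\'e-section strategy as the paper (Section~\ref{sec: geometry}): both transport Theorem~\ref{t.main discrete} through the flow map $\Xi_{\beta_c}$ restricted to $[0,1/\rho_D]\times\Sigma\times\X$. There are two minor tactical differences worth noting. For the box dimension the paper avoids any product computation: it simply invokes item~\eqref{item: 3 geometric continuous} to get $\overline{\Phi^+_{\Lambda_{\beta_c}}}=\Lambda_{\beta_c}$, observes that $\Lambda_{\beta_c}$ has positive $\Leb_{\T^{D+1}}$-measure (since $\phi^-<\phi^+$ a.e.), and applies Lemma~\ref{lem: box dimension} directly. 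For the Hausdorff dimension and rectifiability the paper does \emph{not} apply the product inequalities of Theorem~\ref{thm: Hausdorff dimension product sets} to $\tilde\Psi^+\times[0,\tau]$; instead it proves the continuous-time analogue of Proposition~\ref{prop: lipschitz} by setting $\Omega_j=[\tilde\Omega_j,\tilde\Omega_j+\w]$ and using Theorem~\ref{thm: Hausdorff dimension product sets} only to bound $D_H(\Omega_\infty)\le d$, after which the remark following Theorem~\ref{t.main discrete} is invoked. Your route is slightly more streamlined for~(i); the paper's route makes the rectifiability statement~(ii) marginally more explicit.
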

\begin{rem}
Note that $D$-rectifiability of a measure $\mu$ implies that $\mu$
is exact dimensional with the point-wise dimension equal to $D$ \cite{AmbrosioKirchheim2000}.
As a result of this, several other dimensions of $\mu$ coincide
with $D$ \cite{Zindulka2002}. In
particular, this is true for the information dimension
\cite{Young1982Dimension}.
\end{rem}
\section{Prerequisites}\label{sec: continuous time preliminaries}
As already pointed out, the overall strategy of this article is to reduce particular skew product flows to 
qpf monotone interval maps in order to extend both Theorem~\ref{thm: existence of sna discrete time} and Theorem~\ref{t.main discrete} to qpf ode's.
Appropriate Poincar\'e sections (and their corresponding return maps) which are suitable for this reduction are introduced in the second paragraph of this section.

As the application of the discrete time results involves to deal with a number of $\mc C^2$-estimates, we 
carry out some straightforward computations to provide the derivatives of $(\beta,t,\theta,x)\mapsto\xi_\beta(t,\theta,x)$
in the next paragraph.

\subsection{Derivatives of the flow}
It is well known (see, e.g. \cite[Chapter~\rom{5}, Corollary~4.1]{Hartman}) that for $\hat F\in \P(\X)$,
the map $(\beta,t,\theta,x)\mapsto\xi_\beta(t,\theta,x)$ is $\mc C^2$ so that the task here is to differentiate \eqref{eq: fibre ode general} and express the solutions of
the resulting ode's (sometimes referred to as \emph{variational equations}) in terms of the (unknown) solution $\xi_\beta$ of \eqref{eq: fibre ode general}.
By differentiating (\ref{eq: fibre ode general}) with respect to $x$ and $\vartheta$, we get
\begin{align}
 \d_t \d_x \xi_\beta(t,\theta,x)&=
\d_x F_\beta\left(t\rrho+\theta, \xi_\beta(t,\theta,x)\right) \cdot  \d_x \xi_\beta(t,\theta,x), \label{eq: d x fibre ode}\\
\d_t \d_\vartheta \xi_\beta(t,\theta,x)&= \d_\vartheta F_\beta\left(t\rrho+\theta, \xi_\beta(t,\theta,x)\right)+
\d_x F_\beta\left(t\rrho+\theta, \xi_\beta(t,\theta,x)\right) \cdot  \d_\vartheta \xi_\beta(t,\theta,x). \label{eq: d theta fibre ode}
\end{align}
Further, note that since $\xi_\beta(0,\theta,x)=x$, we have 
$\d_x \xi_\beta(0,\theta,x)=1$ and
$\d_\vartheta \xi_\beta(0,\theta,x)=0$.
Hence,
\begin{align}
 \d_x \xi_\beta(t,\theta,x) =& \, \exp\left(\int_0^t\!
\d_x F_\beta\left(s\rrho+\theta, \xi_\beta(s,\theta,x)\right)\, ds\right) \label{eq: d x xi}
\end{align}
and 
\begin{align}
\label{eq: d theta xi}
\d_\vartheta \xi_\beta(t,\theta,x)
= 
\int_0^t \! \d_\vartheta F_\beta\left(s\rrho+\theta, \xi_\beta(s,\theta,x)\right)
\exp\left(\int_s^t\!
\d_x F_\beta\left(\tau\rrho+\theta, \xi_\beta(\tau,\theta,x)\right)\, d\tau\right)
\, ds .
\end{align}
The expression for
$\d_x \xi_\beta(t,\theta,x)$ immediately shows monotonicity of $\xi_\beta$ in $x$. However, observe that this already 
follows from the uniqueness of the solutions of \eqref{eq: fibre ode general}, of course.
We can differentiate \eqref{eq: d x xi} to get
\begin{align}
\label{eq: d2 x xi}
\d^2_x \xi_\beta(t,\theta,x) 
= &\, \d_x\xi_\beta(t,\theta,x)\cdot
\int_0^t\!
\d^2_x F_\beta\left(s\rrho+\theta, \xi_\beta(s,\theta,x)\right) \cdot \d_x \xi_\beta(s,\theta,x)\, ds
\end{align}
and similarly
\begin{align}
\label{eq: dx dvartheta xi}
\begin{split}
&\d_\vartheta \d_x \xi_\beta(t,\theta,x)\\
&= \d_x \xi_\beta(t,\theta,x)\cdot
\!\!\int_0^t\! 
\d_\vartheta\d_x F_\beta\left(s\rrho+\theta,\xi_\beta(s,\theta,x)\right)+
\d_x^2F_\beta\left(s\rrho+\theta,\xi_\beta(s,\theta,x)\right)\cdot \d_\vartheta\xi_\beta(s,\theta,x)\,ds.
\end{split}
\end{align}
For simplicity, instead of further differentiating \eqref{eq: d theta xi}  with respect to $\vartheta$,
we differentiate \eqref{eq: d theta fibre ode} and solve the resulting problem with initial condition $\d^2_\vartheta \xi_\beta(0,\theta,x)=0$
in order to obtain an expression for $\d^2_\vartheta \xi_\beta(t,\theta,x)$. 
Now,
\begin{align*}
 \d_t \d_\vartheta^2 \xi_\beta(t,\theta,x)&= \d_\vartheta^2 F_\beta\left(t\rrho+\theta, \xi_\beta(t,\theta,x)\right)+
2\d_\vartheta\d_x F_\beta\left(t\rrho+\theta, \xi_\beta(t,\theta,x)\right) \cdot  \d_\vartheta \xi_\beta(t,\theta,x)\\
&\phantom{=}+\d_x^2 F_\beta\left(t\rrho+\theta, \xi_\beta(t,\theta,x)\right) \cdot  \left(\d_\vartheta \xi_\beta(t,\theta,x)\right)^2\\
&\phantom{=}+\d_x F_\beta\left(t\rrho+\theta, \xi_\beta(t,\theta,x)\right) \cdot  \d_\vartheta^2 \xi_\beta(t,\theta,x).
\end{align*}
The solution is straightforwardly given by
\begin{align}
\nonumber
& \d^2_\vartheta \xi_\beta(t,\theta,x)=\\
\label{eq: d2 theta xi}
&\int_0^t
\left [\d^2_x F_\beta\left(\!s\rrho\!+\!\theta, \xi_\beta(s,\theta,x)\right) 
\left(\d_\vartheta \xi_\beta(s,\theta,x)\right)^2
+\d^2_\vartheta F_\beta\left(\!s\rrho\!+\!\theta, \xi_\beta(s,\theta,x)\right) \right.
\\\nonumber
& \left. \vphantom{\left(\d_\vartheta \xi_\beta(s,\theta,x)\right)^2} +
2
\d_x\d_\vartheta F_\beta\left(\!s\rrho\!+\!\theta, \xi_\beta(s,\theta,x)\right)
\d_\vartheta \xi_\beta(s,\theta,x)\right ]
\exp\left(\int_s^t\!
\d_x F_\beta\left(\tau\rrho+\theta, \xi_\beta(\tau,\theta,x)\right)\, d\tau\right)
\, ds.
\end{align}
\subsection{Reduction to a Poincar\'e map}\label{sec: poincare map}
Let us drop the index $\beta$ in this paragraph and set $d=D-1$.
 
Assume without loss of generality that $|\rrho_D|=\max_{j=1,\ldots,D}|\rrho_j|$.
Note that since $\rrho=\left(\rrho_1,\ldots,\rrho_D\right)\in \R^D$ is Diophantine of type $(\mathscr C,\eta)$ 
(see Definition~\ref{def: diophantine continuous}), we have that $\w=\w(\rho)=(\rrho_1/\rrho_D,\ldots,\rrho_{d}/\rrho_D)\in \T^d$ is 
Diophantine of type $({\mathscr C}',\eta')$ 
(see Definition~\ref{def: diophantine})
with $\eta'=\eta$ and $\mathscr C'$ proportional to $\mathscr C/\rrho_D$.

Before we can reduce the \emph{local} flow $\Xi$ from \eqref{eq: skew product flow preliminaries} to a skew product of
the form \eqref{eq: skew-product}, we need to make it a flow, that is, we need the set $U$ to equal $\R\times\T^D\times\X$ so that
any point in $\T^D\times\X$ has a full orbit.
To that end, recall that we are dealing with local bifurcations occurring in a section $\Gamma=\T^D\times[\gamma^-,\gamma^+]\ssq \T^D\times \X$ and assume, for simplicity,  that $[\gamma^-,\gamma^+]$ is in the interior of $\X$.
By changing the non-autonomous vector field $F$ outside of $\Gamma$, we obviously do not change anything about the considered bifurcation within $\Gamma$.
Hence, we may replace $F$ by $\tilde F=h \cdot F$, where $h\:\Theta\times\X\ni (\theta,x)\mapsto \tilde h(x) \in[0,1]$ is a smooth function with $\tilde h\!\!\restriction_{[\gamma^--\eps,\gamma^++\eps]}=1$ and 
$\tilde h\!\!\restriction_{\X\setminus[\gamma^--2\eps,\gamma^++2\eps]}=0$
for some $\eps>0$ with $[\gamma^--3\eps,\gamma^++3\eps]\ssq\X$.
With $\tilde F$, we actually have a flow since a given orbit either stays within
$[\gamma^--2\eps,\gamma^++2\eps]$ or is eventually constant so that every orbit is well-defined for all times.
In the following, we will not stress this detail.
However, the reader should always think of the modified vector field $\tilde F$ whenever full orbits are assumed for arbitrary initial conditions.

In this sense, consider the first return map to the Poincar\'e section $\T_D^d=\{\theta\in \T^{D}\:\theta_D=0\}$, that is, the map
\begin{align}\label{eq: first return map}
\begin{split}
\tilde \Xi \: \T_D^d\times\X &\to \T_D^d\times\X,\\
(\theta,x) &\mapsto \Xi(1/\rrho_D,\theta,x)=\left(\theta+1/\rho_D\cdot \rho,\tilde\xi_{\theta}(x)\right),
\end{split}
\end{align}
where $\tilde\xi_{\theta}(x)=\xi(1/\rrho_D,\theta,x)$. Note that
\eqref{eq: first return map} is of the form \eqref{eq: skew-product}. 
From now on, we identify $\T_D^d$ with $\T^d$ and thus consider $\T^d$ a subset of $\T^D$ (slightly abusing terminology). 
In a similar fashion, we may write $\theta+\w$ when $\theta \in\T^D$ and actually $\theta+1/\rrho_D \cdot \rrho$ is meant.

It is obvious that an invariant graph of the flow $\Xi$ yields an invariant graph for its first return map $\tilde \Xi$.
The following basic observation provides us with a converse.
\begin{prop}\label{prop: invariant graph for time-1 map is invariant for the flow}
Consider the flow $\Xi$ in (\ref{eq: skew product flow preliminaries}) with a non-autonomous $\mc C^1$-vector field $F$ and suppose there is an invariant graph $\tilde\phi\: \T^d\to X$ for the corresponding first return map $\tilde\Xi$.
Then there is a unique invariant graph $\phi$ for $\Xi$ with $\phi(\theta)=\tilde\phi(\theta)$ for each $\theta\in \T^d$ and $\phi$ is continuous if and only if $\tilde\phi$ is continuous. Further,
if ${\tilde\Phi}$ is relatively compact in $\T^d\times\X$, then
$\lam(\phi)=\rrho_D\cdot\lam(\tilde\phi)$.
\end{prop}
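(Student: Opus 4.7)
The plan is to construct $\phi$ explicitly by suspending $\tilde\phi$ with the flow $\xi$. Since $|\rrho_D|$ is maximal (so $\rrho_D\neq 0$), every $\theta\in\T^D$ admits a unique representation $\theta=\theta'+s\rrho$ with $\theta'\in\T^d\cong\T^d_D$ and $s\in[0,1/\rrho_D)$ (explicitly, $s=\theta_D/\rrho_D$). I define
\[
\phi(\theta)\ :=\ \xi(s,\theta',\tilde\phi(\theta')).
\]
The cocycle property \eqref{eq: cocycle property} together with the invariance relation $\xi(1/\rrho_D,\theta',\tilde\phi(\theta'))=\tilde\phi(\theta'+\w)$ immediately yields $\xi(t,\theta,\phi(\theta))=\phi(\theta+t\rrho)$ for every $t\in\R$, so $\phi$ is an invariant graph of $\Xi$. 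Uniqueness as an extension of $\tilde\phi$ follows because any other invariant graph $\phi'$ with $\phi'\!\!\restriction_{\T^d}=\tilde\phi$ satisfies $\phi'(\theta'+s\rrho)=\xi(s,\theta',\phi'(\theta'))=\xi(s,\theta',\tilde\phi(\theta'))=\phi(\theta'+s\rrho)$ by its own invariance.

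For the continuity claim, one direction is trivial, and for the converse it suffices to check continuity of $\phi$ across the section $\{\theta_D=0\}$ (the $\mc C^1$-regularity of $\xi$ handles the complement). As $\theta_D\to 0^+$, the parametrization gives $s\to 0$ and $\theta'\to\tilde\theta$, hence $\phi(\theta)\to\tilde\phi(\tilde\theta)$, where $\tilde\theta$ denotes the projection of $\theta$ to the first $d$ coordinates; as $\theta_D\to 1^-$, the parametrization gives $s\to 1/\rrho_D$ and $\theta'\to\tilde\theta-\w$, so $\phi(\theta)\to\xi(1/\rrho_D,\tilde\theta-\w,\tilde\phi(\tilde\theta-\w))=\tilde\phi(\tilde\theta)$ by $\tilde\Xi$-invariance. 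The two limits agree whenever $\tilde\phi$ is continuous.

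For the Lyapunov exponent I evaluate the defining formula at $t=1/\rrho_D$ and apply the change of variables $\theta=\theta'+s\rrho$ (whose Jacobian is $\rrho_D$, with $s$-range $[0,1/\rrho_D)$) to get
\[
\lam(\phi)\ =\ \rrho_D^2\int_{\T^d}\!\!\int_0^{1/\rrho_D}\!\log\bigl|\d_x\xi(1/\rrho_D,\theta'+s\rrho,\phi(\theta'+s\rrho))\bigr|\,ds\,d\theta'.
\]
Writing the two cocycle decompositions $\xi(s+1/\rrho_D,\theta',\cdot)=\xi(1/\rrho_D,\theta'+s\rrho,\xi(s,\theta',\cdot))=\xi(s,\theta'+\w,\xi(1/\rrho_D,\theta',\cdot))$, differentiating in $x$ and evaluating at $x=\tilde\phi(\theta')$ (for which $\xi(1/\rrho_D,\theta',\tilde\phi(\theta'))=\tilde\phi(\theta'+\w)$), I obtain the telescoping identity
\[
\log\bigl|\d_x\xi(1/\rrho_D,\theta'+s\rrho,\phi(\theta'+s\rrho))\bigr|=\log\bigl|\d_x\xi(1/\rrho_D,\theta',\tilde\phi(\theta'))\bigr|+A(s,\theta'+\w)-A(s,\theta'),
\]
where $A(s,\theta'):=\log|\d_x\xi(s,\theta',\tilde\phi(\theta'))|$. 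Upon integrating over $\theta'\in\T^d$, the last two terms cancel by translation invariance of Lebesgue measure; the first is independent of $s$, so the subsequent integration over $s\in[0,1/\rrho_D)$ together with the prefactor $\rrho_D^2$ combine to give $\lam(\phi)=\rrho_D\cdot\lam(\tilde\phi)$.

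The main technical obstacle is the bookkeeping in the Lyapunov calculation and the justification of its integrability, and this is precisely where the assumption of relative compactness of $\tilde\Phi$ in $\T^d\times\X$ enters: combined with the $\mc C^1$-regularity of $F$, it forces $|\d_xF|$ to be uniformly bounded along the flow-orbits of $\tilde\phi$ up to time $1/\rrho_D$, which in turn makes $A(s,\theta')$ and the other log-derivative terms uniformly bounded on $[0,1/\rrho_D]\times\T^d$. Hence Fubini and the cancellation step above are legitimate.
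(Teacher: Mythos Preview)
Your construction of $\phi$ and the treatment of uniqueness and continuity match the paper's proof (the paper is terser, calling these ``obvious'', but your argument is the same). For the Lyapunov identity you take a genuinely different route: you work directly with the multiplicative cocycle $\partial_x\xi$, obtaining the telescoping relation
\[
\log|\partial_x\xi(1/\rrho_D,\theta'+s\rrho,\phi(\theta'+s\rrho))|=\log|\partial_x\tilde\xi_{\theta'}(\tilde\phi(\theta'))|+A(s,\theta'+\omega)-A(s,\theta')
\]
and then cancel the coboundary by translation invariance on $\T^d$. The paper instead unpacks $\partial_x\xi$ via the variational formula \eqref{eq: d x xi}, reducing both $\lam(\tilde\phi)$ and $\lam(\phi)$ to the common quantity $\int_{\T^D}\partial_x F(\theta,\phi(\theta))\,d\theta$ (using translation invariance on $\T^D$ rather than on $\T^d$). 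Your approach is perhaps more robust---it never leaves the cocycle level and would apply verbatim in settings where the exponential representation \eqref{eq: d x xi} is unavailable---while the paper's route makes the role of the generator $\partial_x F$ explicit and avoids the auxiliary function $A$. Both arguments rely on the same compactness input to justify integrability, and both are correct.
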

\begin{proof}
For $\theta\in \T^D$, set $t_\theta=\theta_D/\rrho_D$. Then, $\phi\:\theta\mapsto
\xi\left(t_\theta,\theta-t_\theta\rrho,\tilde\phi(\theta-t_\theta\rrho)\right)$ is invariant under $\Xi$. The uniqueness and the assertion about the continuity are obvious. 

Now, note that if $\overline{\tilde\Phi}$ is compact in $\X$, then so is $\overline{\Phi}\ssq\Xi\left([0,1]\times\overline{\tilde\Phi}\right)$. 
As $F$ is $\mc C^1$, $\d_x F$ is thus bounded on $\overline{\Phi}$ and hence integrable. 
By means of \eqref{eq: d x xi}, we therefore have
\begin{align*}
\lam(\tilde\phi)&=\int_{\T^d}\!\log\left|\d_x \tilde\xi_\theta\left(\tilde\phi(\theta)\right)\right|\,d\theta= 
\int_{\T^d}\!\int_0^{1/\rrho_D}\!\d_x F_\beta\left(\theta+s\rrho,\xi_\beta(s,\theta,\tilde\phi(\theta)\right)\, ds \,d\theta\\
&=
\int_{\T^d}\!\int_0^{1/\rrho_D}\!\d_x F_\beta\left(\theta+s\rrho,\phi(\theta+s\rrho)\right)\, ds \,d\theta=
1/\rrho_D\cdot\int_{\T^D}\!\d_x F_\beta\left(\theta,\phi(\theta)\right)\, d\theta
\end{align*}
and hence
\begin{align*}
\lam(\phi)&=\rrho_D \cdot\int_{\T^D}\!\log\left|\d_x \xi\left(1/\rho_D,\theta,\phi(\theta)\right)\right|\,d\theta\\
&=
\rrho_D\cdot\int_{\T^1}\!\int_{\T^d}\int_0^{1/\rrho_D}\!\d_x F_\beta\left(\theta+s\rrho,\phi(\theta+s\rrho)\right)\, ds \,d(\theta_1,\ldots,\theta_d)\,d\theta_D
\\
&=
\int_{\T^1} \int_{\T^D}\!\d_x F_\beta\left(\theta,\phi(\theta)\right)\, d\theta \, d\theta_D=\rrho_D\cdot\lam(\tilde\phi). \qedhere
\end{align*}
\end{proof}
\section[The quasiperiodically driven logistic differential equation]{The quasiperiodically driven logistic differential equation\sectionmark{The driven logistic differential equation}}\label{sec: logistic ode}
\sectionmark{The driven logistic differential equation}
Let us consider a one-parameter family of skew product flows $\Xi_\beta$  
of the form (\ref{eq: skew product flow preliminaries}) with $\X=\R$ and
\begin{align}\label{eq: radial x-square family}
\tag{$\ast$} 
F_\beta(\theta,x)=-bx^2+b-\beta b/\left(1-b^{-1/2}\right)\cdot g(\theta),
\end{align}
where $b>1$ and $g\:\T^D\to [0,1]$ is $\mc C^2$ and assumes a unique non-degenerate global maximum at some $\overline \theta\in \T^D$. 
Without loss of generality, we may assume that $g\left(\overline\theta\right)=1$.
To reduce the technicalities of our investigation to a minimum, we assume further that $g(\theta)=h(|\theta-\overline\theta|)$, 
where $h$ is a non-increasing $\mc C^2$-bump-function $h\:\R_{\geq 0}\to [0,1]$ with $h'(0)=0$, $h'\!\!\restriction_{(0,\RR)}<0$ 
(for some $\RR>0$), $h''(0)<0$, $h(y)=0$ for all $y\geq\RR$, and $h(0)=1$.

It is not hard to see that ${(F_\beta)}_{\beta\in [0,1]}$ lies in $\P(\R)$ and satisfies $(\mathscr S\ref{axiom: S0})$--$(\mathscr S\ref{axiom: S3})$ with
$\gamma^+=1$ and $\gamma^-=-1$ 
(where $(\mathscr S\ref{axiom: S1})$ follows from Claim~\ref{claim: crossing is possible in continuous time} below), that is, \eqref{eq: radial x-square family} undergoes a saddle-node bifurcation in the sense of Theorem~\ref{thm: saddle-node}.
In fact, this is true for any section containing $\T^D\times[\gamma^-,\gamma^+]$.
Our goal is to show that there is $\mathscr R$ (independent of $b$) such that if $R\geq \mathscr R$ and $b$ is sufficiently large, then \eqref{eq: radial x-square family} undergoes a \emph{non-smooth} bifurcation.

By the previous section, we hence have to show that the first return maps ${(\tilde \xi_\beta)}_{\beta\in[0,1]}$ corresponding to \eqref{eq: radial x-square family} lie in the set $\mc U_\w(\X)$ of
Theorem~\ref{thm: existence of sna discrete time}.
Recall that we have not explicitly defined the set $\mc U_\w(\X)$ so far.
It is essentially given by a list of estimates, denoted by $(\mc A1)$--$(\mc A15)$, on the fibre maps and their first as well as second derivatives.
In the following, we will provide these estimates already adapted to the first return maps ${(\tilde \Xi_\beta)}_{\beta\in[0,1]}$ and concurrently prove that they are actually verified in the present situation.
Note that there are subtle differences to the presentation in \cite{fuhrmann2014}.
The reader interested in the details may consult \cite{phdfuhrmann2015}.

Let us introduce some notation.
Given $\theta$ and $\theta'$ in $\T^D$ such that there is $s\in[0,1/\rrho_D]$ with
$\theta'=\theta+s\rrho$, we denote by $[\theta,\theta']$ the line segment $\{\theta+\tau\rrho\:\tau\in[0,s]\}$.
Similarly, given $A,B\ssq \T^D$ such that for all $\theta\in A$ there exists  a unique $s(\theta)\in[0,1/\rrho_D]$ so 
that $B=\{\theta+s(\theta)\rrho\:\theta\in A\}$, we set $[A,B]=\bigcup_{\theta\in A}[\theta,\theta+s(\theta)\rrho]$.

We suppose there is $\ee>0$ much smaller than $1/\rrho_D$ (in fact, $\ee<\min\{ 1/18,1/(36\rrho_D)\}$ is sufficient) such that
$[\T^d,\T^d-\ee\rrho]\cap B_\RR\left(\overline \theta\right)=\emptyset$ where $B_\RR\left(\overline \theta\right)$ denotes the ball of radius $\RR$ around $\overline \theta$, that is, in one iteration, the time span before an orbit hits the bump is 
much bigger than the time after hitting the bump.
We may further assume there is a positive constant $\eee<\ee$ such that $[\T^d-\eee\rrho,\T^d]\cap B_\RR\left(\overline \theta\right)=\emptyset$.
By possibly shifting the $\theta_D=0$ section, both assumptions boil down to assuming that $R$ is small (independently of $b$).

To establish the existence of an attracting and a repelling invariant graph,
we need regions where the dynamics are contracting and expanding, respectively.
We will locate these regions in an interval of contraction $C=[1-c,1+c]$ (for some positive $c<1/4$) and an interval of expansion $E=[-1,-1+\exp(-b/(2\rrho_D))]$.\footnote{Observe that by choosing
$C$ to lie above $E$, we decided the repeller to be below the attractor.}
In the following, we restrict our analysis to the section $\Gamma=\T^d\times[-1,1+c]$.

Although in principle, we could consider the flows corresponding to \eqref{eq: radial x-square family} for all $\beta\in[0,1]$, we will show below that if $\beta$ is too close to $1$,
there are $\theta$ and $x$ such that there exist $t_-<t_+ \in \R$ with $\lim_{t\to t_{\pm}}\Xi_\beta(t,\theta,x)=\mp \infty$.
Such solutions clearly rule out the existence of an invariant graph in $\Gamma$. 
For that reason, setting
\begin{align*}
 \beta_-&=\min \{\beta \in [0,1]\: \exists \theta\in \T^d \text{ such that } \tilde\xi_{\beta,\theta}(1-c)\leq -1+\exp(-b/[2\rrho_D]) \},\\
\beta_+&=\max\{\beta \in [0,1]\:   \tilde\xi_{\beta,\theta}(1+c)\geq -1 \text{ for all } \theta\in \T^d \},
\end{align*}
all of the following assumptions are only supposed to hold for all $\beta \in[0,\beta_+]$ (if applicable).

Finally, we define the \emph{critical region}
\begin{align*}
\I_0=\left\{\theta\in \T^d\: [\theta,\theta+\w]\cap \overline{B_\RR\left(\overline \theta\right)}\neq \emptyset\right\}
\end{align*}
and introduce the following constants which will serve as bounds on our derivatives
\begin{align*}
\alpha_c^{-1}=\alpha_e=\exp\left[2b(1-c) (1/\rrho_D-\ee)-10b\ee\right]
\, \quad \text{and} \quad \,
\alpha_l^{-1}=\alpha_u=\exp[2b(1+c)/\rrho_D].
\end{align*}

With these definitions, we are now in a position to go through the assumptions that define the set $\mc U_\w(\X)$.
Let us first consider ${(\mathcal A}1)$--${(\mathcal A}8)$.
\begin{enumerate}[${(\mathcal A}1)$]
\item $0<\d_x \tilde\xi_{\beta,\theta}(x)<\alpha_c$ for $(\theta,x)\in \T^d\times C$;\label{axiom: 1c}
\item $\d_x \tilde\xi_{\beta,\theta}(x)>\alpha_e$ for $(\theta,x)\in (\T^d\setminus \mc I_{0})\times E\bigcap \tilde\Xi_\beta^{-1}(\T^d\times E)$;\label{axiom: 2c}
\item $\alpha_l<\d_x \tilde\xi_{\beta,\theta}(x)<\alpha_u$ for all $(\theta,x)\in \Gamma\cap \tilde\Xi_\beta^{-1}(\Gamma)$.
\label{axiom: 3c}
\suspend{enumerate}
Observe that the above assumptions justify the naming of the intervals $C$ and $E$.

The mechanism by which the SNA/SNR-pair is created in Theorem~\ref{thm: existence of sna discrete time} is essentially the following: 
First, the existence of a continuous attractor and a continuous repeller is guaranteed for small $\beta$.
Then, by increasing $\beta$, we move these two initial invariant graphs closer and closer to each other on a small set until they finally touch on a $\Leb_{\T^d}$-null set. This yields the desired discontinuity.
The existence of the initial invariant graphs is ensured by
\resume{enumerate}[{[${(\mathcal A}1)$]}]
\item $\tilde\xi_{\beta,\theta}(1+c)\leq 1+c$ and $\tilde\xi_{\beta,\theta}(-1)\leq -1$ \label{axiom: 4c}
\suspend{enumerate}
and the first part of ${(\mathcal A}\ref{axiom: 5c})$ below. 
In order to make the invariant graphs touch each other, we have to connect the regions of contraction and expansion more and more with growing $\beta$ which is why we assume a certain monotonicity
in the dependence of $\beta$ (see ${(\mathcal A\ref{axiom: 6c})}$).
The connection between $C$ and $E$, however, needs to be realised carefully in order to guarantee that the two graphs only touch on a measure zero set.
To that end, we only allow orbits starting within the critical region to pass from $C$ to $E$
\resume{enumerate}[{[${(\mathcal A}1)$]}]
\item $\tilde\xi_{\beta,\theta}\left(x\right)\in C$ for all $x\in [-1+\exp(-b/(2\rrho_D)),1+c]$ and $\theta \notin \I_{0}$. \label{axiom: 8c}
\suspend{enumerate}
The set which contains (at least) all $\theta$ for which ${\tilde\xi}_{\beta,\theta}(1-c)\leq -1+\exp(-b/(2\rrho_D))$ is denoted by $\mc J_{0,\beta}$ and is obviously a subset of $\I_{0}$.
We want it to satisfy
\resume{enumerate}[{[${(\mathcal A}1)$]}]
\item $\mc J_{0,\beta}$ is closed and convex and $\mc J_{0,\beta}\ssq \mc J_{0,\beta'}$ for each $\beta'\geq\beta$; \label{axiom: 10c}
\item $\d_\vartheta^2 \tilde\xi_{\beta,\theta}(x)>s$ for each $\vartheta \in {\mathbb S}^{d-1}$, $x\in C$ and all $\theta \in \mc J_{0,\beta}$.
\label{axiom: 9c}.
\item $\tilde\xi_{0,\theta}(1-c)\geq 1-c$ for all $\theta\in \T^d$ and 
$\tilde\xi_{\beta_+,\theta}(1+c) \leq -1$ for some $\theta \in \T^d$; \label{axiom: 5c}
\item $\tilde\xi_{(\cdot)}(\theta,\x)$ is non-increasing for fixed $(\theta,x)\in \Gamma$. \label{axiom: 6c}
\suspend{enumerate}

Before we come to prove $(\mathcal A\ref{axiom: 1c})$--$(\mathcal A\ref{axiom: 6c})$, we provide some simple observations.
From now on, given $\rrho$, we denote by $\theta_0 \in\T^d$ that point which passes through the maximum of $g$ in $\overline\theta$ within one time step, that is, $\overline \theta \in [\theta_0,\theta_0+\w]$.
\begin{prop}\label{prop: radial basic observations}
Suppose $F_\beta$ is given by \eqref{eq: radial x-square family}.
Then
\begin{enumerate}[(a)]
\item $\xi_{\beta}(t,\theta,x)<x$ if $|x|>1$ where $t>0$, $\theta \in \T^D$, 
and $\beta\in [0,1]$.
\label{label a prop radial basic observations}
 \item $\xi_{\beta}(t,\theta,x)\geq x$ if $|x|\leq1$ where $\theta \in \T^D$, $t\in[0,1/\rho_D]$ is such that $[\theta,\theta+t\rrho]\cap B_\RR\left(\overline\theta\right)=\emptyset$, and $\beta\in [0,1]$.
\label{label b prop radial basic observations}
\end{enumerate}
\end{prop}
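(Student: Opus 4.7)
The plan is to deduce both parts directly from the sign of the vector field $F_\beta$, combined with uniqueness of solutions to the ODE. Note first that for any $\beta\in[0,1]$ and $\theta\in\T^D$, the second summand $-\beta b/(1-b^{-1/2})\cdot g(\theta)$ is non-positive, since $b>1$ implies $1-b^{-1/2}>0$ and $g\geq 0$. Consequently,
\[
F_\beta(\theta,y)\ \leq\ b(1-y^2)\qquad\text{for all }(\theta,y)\in\T^D\times\R,
\]
with equality whenever $g(\theta)=0$, i.e., whenever $|\theta-\overline\theta|\geq R$.

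For part \eqref{label a prop radial basic observations}, the bound above gives $F_\beta(\theta',y)<0$ strictly for every $\theta'\in\T^D$ as soon as $|y|>1$. Suppose $x>1$ and fix $t>0$. Because $\partial_t\xi_\beta(0,\theta,x)=F_\beta(\theta,x)<0$, the solution $\xi_\beta(\cdot,\theta,x)$ is strictly decreasing at time zero. If one had $\xi_\beta(t_0,\theta,x)=x$ at some first $t_0>0$, then at $t_0$ the value $\xi_\beta(t_0,\theta,x)=x>1$ would force $\partial_t\xi_\beta(t_0,\theta,x)<0$, contradicting the fact that $\xi_\beta$ approaches $x$ from below. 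Hence $\xi_\beta(t,\theta,x)<x$ for all $t>0$. The case $x<-1$ is entirely analogous: the flow is strictly decreasing at $t=0$ and cannot return to $x$ since at any such return $|\xi_\beta|=|x|>1$ still forces $\partial_t\xi_\beta<0$.

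For part \eqref{label b prop radial basic observations}, the hypothesis $[\theta,\theta+t\rho]\cap B_R(\overline\theta)=\emptyset$ together with the definitions of $g$ and $h$ yields $g(\theta+s\rho)=0$ for all $s\in[0,t]$. Thus along this time interval the non-autonomous vector field reduces to
\[
F_\beta\bigl(\theta+s\rho,y\bigr)\ =\ b(1-y^2),
\]
which is $\geq 0$ on $[-1,1]$ and vanishes exactly at $y=\pm 1$. The constant curves $y\equiv 1$ and $y\equiv -1$ are therefore solutions of \eqref{eq: fibre ode general} on $[0,t]$, so by the uniqueness part of the Picard--Lindelöf theorem the solution $\xi_\beta(\cdot,\theta,x)$ starting from any $x\in[-1,1]$ stays inside $[-1,1]$ throughout $[0,t]$. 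On this interval the derivative $\partial_t\xi_\beta=b(1-\xi_\beta^2)$ is non-negative, so $\xi_\beta(\cdot,\theta,x)$ is monotonously non-decreasing and in particular $\xi_\beta(t,\theta,x)\geq x$.

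No step presents a genuine obstacle; the only subtlety is the boundary behaviour at $|x|=1$ in (b), which is handled by invoking uniqueness to prevent the solution from leaving $[-1,1]$ rather than by a direct sign argument.
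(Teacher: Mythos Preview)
Your proof is correct and follows essentially the same approach as the paper: both parts are deduced from the sign of $F_\beta$ (negative for $|x|>1$, non-negative for $|x|\le 1$ when $\theta\notin B_R(\overline\theta)$), with the paper giving only a two-line sketch while you spell out the details. Your treatment of the boundary case $|x|=1$ in (b) via uniqueness and the constant solutions $y\equiv\pm 1$ is a clean way to justify what the paper leaves implicit.
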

\begin{proof}
	\eqref{label a prop radial basic observations} follows easily from the fact that $F_\beta(\theta,x)< 0$ for arbitrary $\beta$, $\theta$, and $|x|>1$.
	 Likewise, \eqref{label b prop radial basic observations} follows from the fact that $F_\beta(\theta,x)\geq0$ for arbitrary $\beta$, $\theta\notin B_\RR\left(\overline\theta\right)$, and $|x|\leq1$.
\end{proof}
Now, let us consider $(\mc A \ref{axiom: 1c})$--$(\mc A \ref{axiom: 6c})$ in opposite order. 
$(\mc A \ref{axiom: 6c})$ follows immediately from the monotone dependence of \eqref{eq: radial x-square family} on $\beta$. 
The first part of $(\mc A \ref{axiom: 5c})$ is immediate.
The existence of $\beta_+\in(0,1)$ such that the second estimate of $(\mc A \ref{axiom: 5c})$ holds true follows from the next statement under the assumption of sufficiently large $b$.

It is convenient to introduce $U_\eps=\{\theta \in \T^D\: g(\theta)\geq1-\eps\}$ where $\eps>0$.
Clearly, $U_\eps$ is nothing but $B_{h^{-1}([1-\eps,1])}\left(\overline \theta\right)$.
\begin{claim}\label{claim: crossing is possible in continuous time}
Suppose $F_\beta$ is given by \eqref{eq: radial x-square family} and $b$ is sufficiently large.
Then there exists $\beta \in (0,1)$ such that 
$\xi_{\beta}(t,\theta_0,1+c)$ is well-defined for all $t\in[0,1/\rrho_D]$ and
$\tilde \xi_{\beta,\theta_0}(1+c)\leq-1$.
\end{claim}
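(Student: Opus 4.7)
The plan is to study the map $\beta\mapsto\Psi(\beta)\=\xi_\beta(1/\rrho_D,\theta_0,1+c)$, defined on those $\beta\in[0,1]$ for which the orbit exists throughout $[0,1/\rrho_D]$. Standard continuous dependence of ODE solutions on parameters shows $\Psi$ is continuous on its (open) domain, and $(\mathscr S\ref{axiom: S2})$ together with the comparison principle implies strict monotonic decrease. From the autonomous equation $\d_t\xi=-b(\xi-1)(\xi+1)$ governing the $\beta=0$ case one reads off $\Psi(0)\in(1,1+c)$, so $\Psi(0)>-1$. Set
\[
  \beta^\dagger\=\sup\{\beta\in[0,1]\:\xi_\beta(\cdot,\theta_0,1+c)\text{ is defined on }[0,1/\rrho_D]\}.
\]
Openness of the domain forces $\beta^\dagger$ not to lie in it, and a standard compactness argument rules out $\Psi$ staying bounded below on $[0,\beta^\dagger)$; hence $\Psi(\beta)\to-\infty$ as $\beta\nearrow\beta^\dagger$. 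Once we establish $\beta^\dagger<1$, the intermediate value theorem applied to the continuous monotone $\Psi\:[0,\beta^\dagger)\to(-\infty,1+c)$ yields some $\beta^*\in(0,\beta^\dagger)\subseteq(0,1)$ with $\Psi(\beta^*)=-1$, meeting the requirements of the claim.

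The core technical step---and the main obstacle---is proving $\beta^\dagger<1$, i.e.\ that for $b$ sufficiently large, the orbit $\xi_1(\cdot,\theta_0,1+c)$ blows up to $-\infty$ strictly before $t=1/\rrho_D$. Let $t^*$ be the unique time with $\theta_0+t^*\rrho=\overline\theta$; the section-placement assumptions on $\ee,\eee$ force $t^*\in[\eee,1/\rrho_D-\ee]$. Non-degeneracy ($h'(0)=0$, $h''(0)<0$) yields, by Taylor expansion, a constant $C>0$ with $g(\overline\theta+s\rrho)\geq 1-Cs^2|\rrho|^2$ for small $|s|$. Choosing $\delta\=b^{-1/4}/(|\rrho|\sqrt{2C})$, one obtains $g(\theta_0+t\rrho)\geq 1-b^{-1/2}/2$ on $[t^*-\delta,t^*+\delta]$, whence
\[
  F_1(\theta_0+t\rrho,x)\=-bx^2+b-\frac{b\,g(\theta_0+t\rrho)}{1-b^{-1/2}}\leq-\frac{b^{1/2}}{2(1-b^{-1/2})}.
\]
The \emph{a priori} bound $\xi_1\leq 1+c$ (from $F_1(\theta,1+c)\leq-b(2c+c^2)<0$) together with integration over $[t^*-\delta,t^*+\delta]$ yields
\[
  \xi_1(t^*+\delta,\theta_0,1+c)\leq(1+c)-\frac{b^{1/4}}{|\rrho|\sqrt{2C}(1-b^{-1/2})},
\]
which is below $-2$ for $b$ sufficiently large. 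Once $\xi_1\leq-2$, the inequality $\d_t\xi\leq-b\xi^2/2$ applies, and a standard Riccati comparison produces blow-up within additional time at most $1/b$; for $b$ large this is smaller than $1/\rrho_D-(t^*+\delta)\geq\ee-\delta>0$, so blow-up strictly precedes $1/\rrho_D$.

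This quantitative blow-up estimate is the only delicate piece of the proof; it crucially uses both the coefficient $b/(1-b^{-1/2})$ in the definition of $F_\beta$ (engineered so that the drift at the maximum of $g$ is of order $-\sqrt{b}$ rather than merely bounded) and the non-degenerate quadratic decay of $h$ near $0$. Everything else amounts to continuous dependence, comparison, and a straightforward application of the intermediate value theorem.
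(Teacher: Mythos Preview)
Your argument is correct in substance and reaches the same conclusion as the paper, but the route differs. The paper compares $\xi_\beta$ on the plateau $U_{b^{-1/2}/2}$ directly with the explicit solution $y_\beta(t)=-\sqrt{\alpha}\tan(b\sqrt{\alpha}\,t+\text{phase})$ of $\dot y=-by^2-b\alpha$ (where $\alpha=\beta/(2b^{1/2}-2)+\beta-1$), and observes that for suitable $\beta\in(0,1)$ this tangent crosses $-1$ within the plateau; an intermediate-value argument then yields the desired $\beta$. Your proof instead first pushes $\beta$ all the way to $1$, shows blow-up by a two-stage estimate (linear drift on the plateau to get below $-2$, then a Riccati comparison $\dot y=-by^2/2$), and only afterwards applies the intermediate-value theorem to $\Psi$. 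Your framework with $\beta^\dagger$ and the compactness argument $\Psi(\beta)\to-\infty$ is more explicit about why the orbit survives to time $1/\rrho_D$ at the chosen $\beta$; the paper's concluding sentence is terser on this point.

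One small slip: you have the roles of $\ee$ and $\eee$ reversed in locating $t^*$. The section-placement assumptions in the paper say that the bump $B_R(\overline\theta)$ is disjoint from $[\T^d,\T^d-\ee\rrho]$ and from $[\T^d-\eee\rrho,\T^d]$, which means the orbit from $\theta_0$ enters the bump only after time $1/\rrho_D-\ee$ and leaves it before time $1/\rrho_D-\eee$. Hence $t^*\in(1/\rrho_D-\ee,\,1/\rrho_D-\eee)$, and the remaining-time bound should read $1/\rrho_D-(t^*+\delta)\geq\eee-\delta$, not $\ee-\delta$. Since $\eee>0$ is a fixed constant independent of $b$ and $\delta\sim b^{-1/4}\to 0$, your blow-up estimate $1/b<\eee-\delta$ still holds for large $b$, so the argument goes through unchanged after this correction.
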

\begin{proof}
	Note that for $t$ with $\theta_0+t\rrho \in U_{b^{-1/2}/2}$, we have $\d_t \xi_{\beta}(t,\theta_0,1+c)= -b\xi_{\beta}(t,\theta_0,1+c)^2+ b-\beta b/(1-b^{-1/2}) g(\theta+t\rrho)\leq
	-b\xi_{\beta}(t,\theta_0,1+c)^2-b(\beta/(2b^{1/2}-2)+\beta-1)$. 
        Consider such $\beta$ for which
	 $\beta/(2b^{1/2}-2)+\beta-1>0$.
	Observe that
	\begin{align*} y_\beta(t)=-\sqrt{\beta/(2b^{1/2}-2)+\beta-1}\tan\left(b\sqrt{\beta/(2b^{1/2}-2)+\beta-1}(t-t_0)+\alpha\right)
	\end{align*}
	 with $\alpha=\arctan\left(-(1+c)/\sqrt{\beta/(2b^{1/2}-2)+\beta-1}\right)$ solves 
\[\d_t y_\beta(t)=-b y_\beta(t)^2-b(\beta/(2b^{1/2}-2)+\beta-1)\]
	  with $y_\beta(t_0)=1+c$.
	  Thus, $y_\beta(t)$ is
	an upper bound for $\xi_{\beta}(t,\theta_0,1+c)$ for all $t\in[t_0,t_1]$, where $[t_0,t_1]$ is set to be the maximal interval with  
	$[\theta_0+t_0\rrho,\theta_0+t_1\rrho]\ssq U_{b^{-1/2}/2}$.
	Note that $|t_1-t_0|>b^{-1/2}$ for big enough $b$ since $g$ assumes a maximum in $\overline \theta$.
	Now, for large enough $b$, there is $\beta\in(0,1)$ such that $y_\beta(b^{-1/2}+t_0)<-1$ which proves that the image of $[0,1)\ni\beta\mapsto \xi_\beta(t_1,\theta_0,1+c)$ contains $[-1,1]$. Proposition~\ref{prop: radial basic observations}\eqref{label a prop radial basic observations} and the continuous dependence of $\tilde \xi_{\beta,\theta_0}(1+c)$ on $\beta$ hence yield the statement.
\end{proof}
$(\mc A\ref{axiom: 10c})$ and $(\mc A\ref{axiom: 9c})$ are treated in Lemma~\ref{lem: nondegenerate bump}. For sufficiently large $b$, $(\mc A\ref{axiom: 8c})$ is a consequence of the following statement.
\begin{claim}
Suppose $F_\beta$ is given by \eqref{eq: radial x-square family}, $\theta\notin \I_0$ and $b$ is sufficiently large. Then $\tilde\xi_{\beta,\theta}(-1+\exp[-b/(2\rrho_D)])>1-c$.
\end{claim}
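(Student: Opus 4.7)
The crux of the argument is the observation that, by the very definition of $\I_0$, the condition $\theta\notin\I_0$ is equivalent to saying that the orbit segment $\{\theta+s\rrho:s\in[0,1/\rrho_D]\}$ avoids the support $\overline{B_\RR(\overline\theta)}$ of the forcing bump. Hence $g(\theta+s\rrho)=0$ for every such $s$, and the fibre equation associated with \eqref{eq: radial x-square family} reduces on this time interval to the \emph{autonomous} Riccati-type ODE $\d_t\xi=b(1-\xi^2)$; in particular, the parameter $\beta$ drops out entirely, so it suffices to estimate the solution of this single autonomous equation.

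Since $|x_0|<1$ for $x_0=-1+\exp(-b/(2\rrho_D))$, the explicit solution is $\xi(t)=\tanh\!\bigl(bt+\artanh(x_0)\bigr)$. With $\eps=\exp(-b/(2\rrho_D))$ one computes
\[
\artanh(-1+\eps)\;=\;\tfrac12\log\frac{\eps}{2-\eps}\;=\;-\frac{b}{4\rrho_D}-\tfrac12\log(2-\eps),
\]
so that
\[
\tilde\xi_{\beta,\theta}(x_0)\;=\;\xi(1/\rrho_D)\;=\;\tanh\!\left(\frac{3b}{4\rrho_D}-\tfrac12\log(2-\eps)\right).
\]
As $b\to\infty$ the argument of $\tanh$ diverges to $+\infty$, and the right-hand side converges to $1$. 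Choosing $b$ large enough that $3b/(4\rrho_D)-\tfrac12\log 2\geq\artanh(1-c)$, a condition depending only on the fixed constants $c$ and $\rrho_D$, then yields $\tilde\xi_{\beta,\theta}(x_0)>1-c$.

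The underlying bookkeeping is the budget calculation $b/\rrho_D+\artanh(x_0)\sim 3b/(4\rrho_D)$: since $\artanh(x_0)$ is only of order $-b/(4\rrho_D)$, three quarters of the available flow-time remain to drive the trajectory from the bottom of $E$ through $C$. This is also the reason why the threshold $\exp(-b/(2\rrho_D))$ is built into the definition of $E$---any substantially slower decay in the initial distance to $-1$ would consume too much of the flow-time in the $\artanh$ singularity and the argument would fail. I do not foresee any genuine obstacle here: once the autonomous reduction is justified by $\theta\notin\I_0$, the whole claim is a one-line evaluation of $\tanh\circ\artanh$.
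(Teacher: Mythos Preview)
Your proof is correct and follows essentially the same approach as the paper: both reduce to the autonomous equation $\dot y = b(1-y^2)$ via the definition of $\I_0$, solve it explicitly as $\tanh(bt+\alpha)$, and observe that $\alpha=\artanh(x_0)$ is only of order $-b/(4\rrho_D)$ so that the argument of $\tanh$ at $t=1/\rrho_D$ diverges. The paper merely bounds $|\alpha|\leq b/(3\rrho_D)$ (obtaining the cruder $\tanh(2b/(3\rrho_D))$), whereas you compute $\alpha$ exactly and get the sharper $3b/(4\rrho_D)$ in the argument; either way the conclusion is immediate.
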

\begin{proof}
Note that as $\theta\notin \I_0$, we have that $\xi_\beta(t,\theta,-1+\exp(-b/(2\rrho_D)))$
equals $y(t)$ for $t\in[0,1/\rrho_D]$, where $y$ is the solution of the initial value problem
\begin{align*}
 \dot y =-b y^2+b, \quad y(0)=-1+\exp(-b/(2\rrho_D)).
\end{align*}
 Now, $y(t)=\tanh(b\cdot t+\alpha)$, where $|\alpha| =|\!\artanh(-1+\exp[-b/(2\rrho_D)])|\leq b/(3\rrho_D)$. Hence, $y(1/\rrho_D)\geq\tanh(2b/(3\rrho_D))>1-c$ for large enough $b$. 
\end{proof}
$(\mc A \ref{axiom: 4c})$ is an immediate consequence of Proposition~\ref{prop: radial basic observations} \eqref{label a prop radial basic observations}. 
Hence, apart from $(\mc A\ref{axiom: 10c})$ and $(\mc A\ref{axiom: 9c})$, we are left with $(\mc A \ref{axiom: 1c})$--$(\mc A \ref{axiom: 3c})$ which follow from the next assertion.
\begin{prop}\label{prop: expansion in E contraction in C}
Suppose $F_\beta$ is given by \eqref{eq: radial x-square family} and $b$ is sufficiently large. Then
\begin{enumerate}[(a)]
 \item $\d_x \tilde\xi_{\beta,\theta}(x)\leq\exp(-2b(1-c) (1/\rrho_D-\ee)+4b\ee)$ and $\tilde\xi_{\beta,\theta}(x)>-2$ for $(\theta,x)\in \T^d\times C$; \label{label: claim a}
\item $\d_x \tilde\xi_{\beta,\theta}(x)\geq\exp(2b(1-\exp[-b/(2\rrho_D)])/\rrho_D)$ for $(\theta,x)\in (\T^d\setminus \I_0)\times E\cap {\tilde\Xi}^{-1}_\beta (\T^d\times E)$; 
\label{label: claim b}
\item  $\exp(-2b(1+c)/\rrho_D) <\d_x \tilde\xi_{\beta,\theta}(x)\leq\exp(2b/\rrho_D)$ for 	all $(\theta,x)\in \Gamma\cap {\tilde\Xi}^{-1}_\beta (\Gamma)$. \label{label: claim c}
\end{enumerate}
\end{prop}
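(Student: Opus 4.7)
The key simplification is that for the family $F_\beta(\theta,x) = -bx^2 + b - \beta b g(\theta)/(1-b^{-1/2})$, we have $\partial_x F_\beta = -2bx$, so that equation~\eqref{eq: d x xi} specialises to
\[
\partial_x \tilde\xi_{\beta,\theta}(x) = \exp\!\left(-2b \int_0^{1/\rrho_D}\!\xi_\beta(s,\theta,x)\,ds\right).
\]
Every assertion in (a)--(c) thus reduces to a one-sided bound on the integral $I_\beta(\theta,x) := \int_0^{1/\rrho_D}\!\xi_\beta(s,\theta,x)\,ds$, and the plan is to establish each such bound by controlling where the orbit $\xi_\beta(\cdot,\theta,x)$ can travel during one return time.

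For part~(c), I would first show $\xi_\beta(s,\theta,x) \in [-1,1+c]$ for every $s \in [0,1/\rrho_D]$ whenever $(\theta,x),\tilde\Xi_\beta(\theta,x) \in \Gamma$. The upper bound follows because $F_\beta(\theta,x) \leq -bx^2 + b \leq 0$ for $x \geq 1$, so $\xi$ cannot cross $1+c$ from below; the lower bound follows by contraposition from Proposition~\ref{prop: radial basic observations}\eqref{label a prop radial basic observations}, since an excursion into $(-\infty,-1)$ would propagate monotonically and contradict $\tilde\xi_\beta(\theta,x) \geq -1$. Integrating gives $I_\beta \in [-1/\rrho_D,(1+c)/\rrho_D]$, which is precisely~(c) after exponentiation.

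Part~(b) is almost free once one notices that the hypothesis $\theta \notin \mathcal I_0$ means $g \equiv 0$ along the entire segment $[\theta,\theta+\w]$, so that $\xi_\beta$ solves the autonomous Riccati equation $\dot\xi = -b\xi^2+b = b(1-\xi)(1+\xi)$. This flow is strictly increasing on $(-1,1)$, so the constraint that both endpoints of the orbit lie in $E = [-1,-1+\exp(-b/(2\rrho_D))]$ forces $\xi_\beta(s,\theta,x) \in E$ for all $s \in [0,1/\rrho_D]$; hence $I_\beta \leq (-1+\exp(-b/(2\rrho_D)))/\rrho_D$, giving the required expansion estimate.

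For part~(a) I would split $[0,1/\rrho_D] = [0,1/\rrho_D-\ee] \cup [1/\rrho_D-\ee,1/\rrho_D]$. On the first interval the standing assumption $[\T^d,\T^d-\ee\rrho] \cap B_\RR(\overline\theta) = \emptyset$ forces $g \equiv 0$, so again $\dot\xi = -b\xi^2+b$; a case distinction on $x \in [1-c,1]$ versus $x \in [1,1+c]$ then yields $\xi \geq 1-c$, since in the first case $\dot\xi \geq 0$ and in the second case $F_\beta(\theta,1+c) \leq 0$ keeps $\xi \in [1,1+c]$. Together with $\xi > -2$ on the residual interval (which is simultaneously the second assertion of (a)), this gives $I_\beta \geq (1-c)(1/\rrho_D-\ee) - 2\ee$, which is the needed bound after exponentiation. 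The hard part will be the pointwise lower bound $\xi_\beta(s,\theta,x) > -2$ on $[1/\rrho_D-\ee,1/\rrho_D]$: a crude Riccati comparison with $\dot y = -by^2 - bM$, $M = \beta/(1-b^{-1/2})$, has its lower solution diverge in time of order $1/b$, which is shorter than $\ee$ for large $b$, so such a pointwise estimate is insufficient by itself. Instead one must exploit the constraint $\beta \leq \beta_+$, which forces $\xi_\beta(\cdot,\theta,1+c) \geq -1$ throughout by Proposition~\ref{prop: radial basic observations}\eqref{label a prop radial basic observations}, and combine it with the monotonicity of $\xi_\beta$ in $x$ and a quantitative estimate on the spread $\xi_\beta(\cdot,\theta,1+c) - \xi_\beta(\cdot,\theta,1-c)$ (controlled through its linear variational equation and the localisation of the bump via $h''(0) < 0$) to propagate an admissible lower bound down to orbits starting at $1-c$.
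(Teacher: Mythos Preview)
Your arguments for parts (b) and (c) match the paper's essentially verbatim: trap the trajectory in the relevant interval (using Proposition~\ref{prop: radial basic observations} in each direction) and read off the bound from the integral formula $\d_x\tilde\xi_{\beta,\theta}(x)=\exp\bigl(-2b\int_0^{1/\rrho_D}\xi_\beta(s,\theta,x)\,ds\bigr)$.

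For part (a) you have identified the right ingredients---the constraint $\beta\leq\beta_+$ forcing $\xi_\beta(s,\theta,1+c)\geq-1$ for all $s$, together with a spread estimate to propagate this down to $x=1-c$---and this is precisely the paper's idea. What you leave unresolved is a circularity: the spread
\[
\xi_\beta(s,\theta,1+c)-\xi_\beta(s,\theta,1-c)=2c\,\d_x\xi_\beta(s,\theta,y)
=2c\,\exp\!\left(-2b\!\int_0^s\xi_\beta(\tau,\theta,y)\,d\tau\right)
\]
can only be made small \emph{once you already know} $\xi_\beta(\tau,\theta,y)\geq-2$ on the bump interval, which is exactly what you are trying to prove. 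The paper breaks this loop by a short contradiction: assume $\tilde\xi_{\beta,\theta'}(1-c)=-2$; then Proposition~\ref{prop: radial basic observations}\eqref{label a prop radial basic observations} guarantees $\xi_\beta(s,\theta',x)\geq-2$ for all earlier $s$ and all $x\in C$, the spread estimate becomes legitimate, and one obtains $\tilde\xi_{\beta,\theta'}(1+c)\leq-2+2c\,\exp\bigl(-2b(1-c)(1/\rrho_D-\ee)+4b\ee\bigr)<-1$, contradicting the definition of $\beta_+$. (Equivalently, one can phrase this as a continuity argument in $s$.) Your invocation of ``the localisation of the bump via $h''(0)<0$'' is a red herring here: nothing about the shape of $h$ beyond its support enters this proposition; that non-degeneracy hypothesis only becomes relevant later, in Lemma~\ref{lem: nondegenerate bump}.
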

\begin{proof}
Note that due to Proposition~\ref{prop: radial basic observations}\eqref{label a prop radial basic observations}, we have that $(\theta,x)\in\Gamma\cap {\tilde\Xi}^{-1}_\beta (\Gamma)$ necessarily implies $\xi_\beta(t,\theta,x)\in [-1,1+c)$ for all $t\in(0,1/\rho_D]$.

Now, \eqref{label: claim c} follows from equation \eqref{eq: d x xi} since we have $2b\geq\d_xF_\beta>-2b(1+c)$ on $\T^D\times[-1,1+c)$.
Similarly, we obviously get item \eqref{label: claim b} as long as $\xi_\beta(t,\theta,x)\in E$ for all $t\in[0,1/\rrho_D]$ which necessarily holds 
for $(\theta,x)\in (\T^d\setminus \I_0)\times E\bigcap {\tilde\Xi}^{-1}_\beta (\T^d\times E)$ due to Proposition~\ref{prop: radial basic observations} \eqref{label b prop radial basic observations}.

It remains to consider \eqref{label: claim a}.
Note that for all $x\in C$, all $\beta \in [0,1]$, each $\theta\in \T^d$, and $t\in[0,1/\rrho_D-\ee]$ we have $\xi_\beta(t,\theta,x)\geq \xi_\beta(t,\theta,1-c)\geq1-c$.
Suppose for a contradiction there were $\theta'\in \T^d$ and  
$\beta\in[0,\beta_+]$ such that $\tilde \xi_{\beta,\theta'}(1-c)=-2$. 
Note that in this case $\xi_{\beta}(t,\theta',1-c)\geq-2$ holds necessarily for all $t\in[0,1/\rrho_D]$ because
of Proposition~\ref{prop: radial basic observations}\eqref{label a prop radial basic observations}.
Thus, \eqref{eq: d x xi} yields 
\begin{align}\label{eq: upper bound contraction rate}
\begin{split}
\d_x \tilde\xi_{\beta,\theta'}(x)&=\exp\left(\int_0^{1/\rrho_D}\!
\d_x F_\beta(s\rrho+\theta', \xi_\beta(s,\theta',x))\, ds\right)\\
&=
\exp\left(-2b\int_{0}^{1/\rrho_D-\ee}\!
 \xi_\beta(s,\theta',x)\, ds-2b\int_{1/\rrho_D-\ee}^{1/\rrho_D}\!
 \xi_\beta(s,\theta',x)\, ds\right)\\
&\leq \exp(-2b (1-c) (1/\rrho_D-\ee)+4b\ee)
\end{split}
\end{align}
for all $x\in[1-c,1+c]$. 
Hence in this case, we had $\tilde \xi_{\beta_+,\theta'}(1+c)\leq \tilde \xi_{\beta,\theta'}(1+c)\leq-2+2c\cdot \exp(-2b (1-c)(1/\rrho_D-\ee)+4b\ee)<-1$ (where the last inequality holds for large enough $b$ if we assume $\ee<1/(36\rrho_D)$)
contradicting the definition of $\beta_+$. 
Thus, we have $\tilde \xi_{\beta,\theta}(x)>-2$ for all $\theta\in \T^d$, $x\in C$ and $\beta\in[0,\beta_+]$ and hence \eqref{eq: upper bound contraction rate} yields
an upper bound for $\d_x \tilde\xi_{\beta,\theta}(x)$ with $(\theta,x)\in\T^d\times C$.
\end{proof}
The core part of this article is to show that there is $\mc J_{0,\beta}$ and $s>0$ such that $(\mc A \ref{axiom: 10c})$ and $(\mc A \ref{axiom: 9c})$ hold.
The respective proof is given with the next lemma which sheds light on the mechanism by which the sensitive interplay of contraction and expansion is realised.
In short words, the problem is to seize control over the second derivatives of solutions of an ode by means of its right-hand side.
\begin{lem}\label{lem: nondegenerate bump}
Suppose $F_\beta$ is given by \eqref{eq: radial x-square family}, $R$ is small enough, $b$ is sufficiently large, 
and $\beta\in[\beta_-,\beta_+]$.

Then there is $\mc J_{0,\beta}\ssq\mc I_{0}$ such that $\d^2_\vartheta \tilde\xi_{\beta,\theta}(x)>\exp(b\eee/4)$ for arbitrary $x\in C$, $\vartheta \in \mathbb S^{d-1}$, and $\theta\in \mc J_{0,\beta}$.
Further, $\mc J_{0,\beta}$ contains all $\theta\in\T^d$ with $\tilde\xi_{\beta,\theta}(1-c)\leq-1+\exp(-b/(2\rrho_D))$ and $(\mc A \ref{axiom: 10c})$ is satisfied.
\end{lem}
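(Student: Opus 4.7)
The plan is to specialize the variational formula \eqref{eq: d2 theta xi} to the vector field \eqref{eq: radial x-square family} and extract from the resulting integrand a positive lower bound that dominates a competing negative contribution. The crucial structural observation is that $F_\beta(\theta,x)=-bx^2+b-\beta b/(1-b^{-1/2})g(\theta)$ factors into an $x$-dependent and a $\theta$-dependent summand, so $\d_x\d_\vartheta F_\beta\equiv 0$; the cross term in \eqref{eq: d2 theta xi} drops out, and we may decompose
\[
\d_\vartheta^2 \tilde\xi_{\beta,\theta}(x)=I_1(\theta,x)+I_2(\theta,x),
\]
where $I_1$ collects the $\d_x^2 F_\beta\cdot(\d_\vartheta\xi_\beta)^2=-2b(\d_\vartheta\xi_\beta)^2$ contribution, which is manifestly non-positive, and $I_2$ collects the $\d_\vartheta^2 F_\beta=-\beta b/(1-b^{-1/2})\d_\vartheta^2 g$ contribution.

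The bulk of the work is to establish $I_2$ as the dominant, positive term. For $\theta$ in a suitable subset of $\I_0$, the orbit segment $s\mapsto\theta+s\rrho$ traverses a small neighborhood $U_\eps$ of $\overline\theta$ (for some $\eps$ depending only on $h$) on which the non-degenerate maximum of $h$ at $0$---recall $h'(0)=0$ and $h''(0)<0$---forces $\d_\vartheta^2 g\leq-\kappa<0$ uniformly in $\vartheta\in\mathbb S^{d-1}$, with $\kappa=\kappa(h)>0$. Hence the integrand of $I_2$ becomes positive and of order $b$ on a time-interval whose length is bounded below independently of $b$. The remaining factor $\exp\!\bigl(\int_s^{1/\rrho_D}\d_x F_\beta\,d\tau\bigr)=\exp\!\bigl(-2b\int_s^{1/\rrho_D}\xi_\beta\,d\tau\bigr)$ supplies the exponential amplification: by the defining property of $\mc J_{0,\beta}$, the orbit starting at $x\in C$ is driven down to values close to $-1$ during the bump passage and, thanks to the placement assumption $[\T^d-\eee\rrho,\T^d]\cap B_\RR(\overline\theta)=\emptyset$, remains strongly negative throughout the cooling window of length $\eee$ preceding the return to the Poincar\'e section. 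A lower bound of the form $I_2\geq c\cdot \exp(b\eee/2)$ with $c>0$ independent of $b$ is the expected outcome.

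The principal technical obstacle is to show $|I_1|\ll I_2$. For this, \eqref{eq: d theta xi} gives $|\d_\vartheta\xi_\beta(s,\theta,x)|\leq\|\d_\vartheta F_\beta\|_\infty\cdot\int_0^s\exp(\cdots)\,d\tau$, and since $|\d_\vartheta F_\beta|\leq\beta b\|h'\|_\infty/(1-b^{-1/2})$ while the exponential factor inside $I_1$ is essentially the same as that inside $I_2$, the ratio $|I_1|/I_2$ scales roughly like $b\|h'\|_\infty^2/\kappa$. Prescribing $\RR$ sufficiently small, as the statement permits, forces $\|h'\|_\infty^2$ to be arbitrarily small compared with $\kappa$---on a short bump $h$ is essentially parabolic, so $h'$ is controlled by $h''$---and the desired domination follows. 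Summing $I_1+I_2$ and retaining a safety factor then yields $\d_\vartheta^2\tilde\xi_{\beta,\theta}(x)>\exp(b\eee/4)$ for all sufficiently large $b$, uniformly in the relevant $\theta$, $x\in C$, and $\vartheta\in\mathbb S^{d-1}$.

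To verify the set-theoretic conditions in $(\mc A \ref{axiom: 10c})$, I would define $\mc J_{0,\beta}$ as the closed convex hull, within $\I_0$, of $\{\theta\in\T^d\: \tilde\xi_{\beta,\theta}(1-c)\leq-1+\exp(-b/(2\rrho_D))\}$. Monotonicity $\mc J_{0,\beta}\ssq\mc J_{0,\beta'}$ for $\beta'\geq\beta$ is immediate from $\d_\beta F_\beta\leq 0$ (axiom $(\mathscr S\ref{axiom: S2})$), which makes $\beta\mapsto\tilde\xi_{\beta,\theta}(1-c)$ non-increasing. Closedness is automatic by continuity of $(\beta,\theta)\mapsto\tilde\xi_{\beta,\theta}(1-c)$. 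Convexity is the geometric reflection of the strict bound $\d_\vartheta^2\tilde\xi_{\beta,\theta}(1-c)>0$ that we have just proved; to avoid circularity, this bound must first be established on a slightly enlarged neighborhood of the sublevel set (obtained by replacing $\kappa$ by $\kappa/2$ and tightening $\eps$), so that the sublevel set indeed lies in a region where convexity of $\theta\mapsto\tilde\xi_{\beta,\theta}(1-c)$ is already known.
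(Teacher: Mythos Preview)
Your decomposition $\d_\vartheta^2\tilde\xi_{\beta,\theta}(x)=I_1+I_2$ and the identification of $I_2$ as the source of positivity are both correct starting points, and the paper proceeds the same way in spirit. The gap is in your control of $I_1$.

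First, the scaling claim is backwards: $h$ is a fixed bump with $h(0)=1$ and $h(R)=0$, so shrinking $R$ forces $\|h'\|_\infty\gtrsim 1/R$ to grow, not shrink. Even if one rescales so that $\|h'\|_\infty^2/\kappa$ stays bounded, your own heuristic $|I_1|/I_2\sim b\|h'\|_\infty^2/\kappa$ still carries the stray factor $b\to\infty$. More seriously, $\vartheta\in\mathbb S^{d-1}$ lies in the tangent space of the Poincar\'e section $\{\theta_D=0\}$, which is \emph{not} orthogonal to $\rho$; hence $\d_\vartheta g(\theta+s\rho)$ picks up the full radial derivative of $h$ and is $O(1)$ throughout the bump passage. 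Since the exponential weight $\exp\bigl(\int_s^{1/\rho_D}\d_xF_\beta\,d\tau\bigr)$ can be as large as $\exp(4b\ee)$ (cf.~\eqref{eq: upper bound exp int}), $\d_\vartheta\xi_\beta$ is itself exponentially large in $b$, and $I_1$ dominates $I_2$ rather than the other way around.

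The paper's remedy is a geometric change of variables that you are missing. One writes $\vartheta=\Delta\cdot|\vartheta+\d_\vartheta t_1(\theta)\rho|- \d_\vartheta t_1(\theta)\,\rho$ with $\Delta\perp\rho$, so that differentiation in $\vartheta$ becomes differentiation in $\Delta$ plus a time-shift (chain rule through $t_1(\theta)$). On the thin tube $\tilde T_\tau$ of radius $r_b=\exp(-9b\ee)$ around the central orbit, \eqref{eq: d Delta g} gives $|\d_\Delta g|\lesssim r_b$, which is exponentially small and cancels the amplification in \eqref{eq: upper bound exp int}; this is what yields \eqref{eq: d vartheta' xi} and makes the $(\d_\Delta\xi_\beta)^2$ contribution negligible. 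The time-shift terms are handled separately and contribute only polynomially in $b$. Without this perpendicular decomposition the estimate does not close.

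Your construction of $\mc J_{0,\beta}$ via the convex hull also runs into the circularity you flag and do not resolve. The paper avoids it entirely: because $g(\theta)=h(|\theta-\overline\theta|)$ is radially symmetric, the auxiliary function $u$ on the disk $T_{t_2}$ satisfies $u(\theta)=\hat u(|\theta-(\theta_0+t_2\rho)|)$ for a monotone $\hat u$ (see \eqref{eq: bump over T t2}), so the sublevel set defining $\mc J_{0,\beta}$ is a round ball and hence convex \emph{a priori}, without invoking the second-derivative bound.
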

For later use, we provide some crude and straightforward estimates in the following auxiliary statement.
We denote by $\mathbf{1}_{A}$ the characteristic function of a set $A\ssq\T^D$, that is, $\mathbf{1}_{A}=1$ on $A$ and $\mathbf{1}_{A}=0$ on
$\T^D\setminus A$.
\begin{claim}\label{claim: int exp (int F(xi)) is small}
For $(\theta,x) \in \T^d\times C$, $\beta\in[0,\beta_+]$, $t_1\in[0,1/\rrho_D-\ee]$, and $t\in[t_1,1/\rrho_D]$, we have under the assumption of sufficiently large $b$ that
\begin{align}\label{eq: upper bound exp int}
 &\int_0^{t-t_1}\!
\exp\left(\int_s^{t-t_1}\!
\d_x F_\beta\left((\tau+t_1)\rrho+\theta,\xi_\beta(\tau+t_1,\theta,x)\right)\, d\tau\right)
\, ds \leq \exp(5b\ee).
\end{align}
Further, suppose ${\tilde\xi}_{\beta,\theta}(x)\leq-3/4$ and $t\geq1/\rrho_D-\eee/2$. There is $\RR_0<\RR$ such that for sufficiently large $b$
\begin{align}\label{eq: lower bound exp int}
&\int_{0}^{t-t_1}\!
\mathbf{1}_{B_{\RR_0}\left(\overline\theta\right)}((s+t_1)\rrho+\theta)
\exp\left(\int_s^{t-t_1}\!
\d_x F_\beta\left((\tau+t_1)\rrho+\theta,\xi_\beta(\tau+t_1,\theta,x)\right)\, d\tau\right)
\, ds
\geq \exp(b\eee/2).
\end{align}
Finally, if $0\leq t_1\leq 1/\rrho_D-5\ee$ and $t\in[t_1,1/\rrho_D]$, then
we have for all $(\theta,x) \in \T^d\times C$ and sufficiently large $b$ that
\begin{align}
 \label{eq: dx xi <= 1}
\exp\left(\int_0^{t-t_1}\!
\d_x F_\beta\left((\tau+t_1)\rrho+\theta,\xi_\beta(\tau+t_1,\theta,x)\right)\, d\tau\right) \leq 1.
\end{align}

\end{claim}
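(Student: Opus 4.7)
All three estimates rest on the identity $\d_x F_\beta(\theta,x) = -2bx$, which turns each inner integrand $\exp\bigl(\int_s^{t-t_1}\d_x F_\beta(\ldots)\,d\tau\bigr)$ into $\exp\bigl(-2b\int_s^{t-t_1}\xi_\beta(\tau+t_1,\theta,x)\,d\tau\bigr)$. Two structural facts will drive the bounds: the uniform estimate $\xi_\beta > -2$ from Proposition~\ref{prop: expansion in E contraction in C}, and the geometric setup, according to which the bump $B_\RR(\overline\theta)$ is traversed only inside the time window $[1/\rrho_D-\ee,\,1/\rrho_D-\eee]$, so that for $x\in C$ the orbit $\xi_\beta(\cdot,\theta,x)$ stays above $1-c$ on the pre-bump slab $[0,1/\rrho_D-\ee]$ (because $F_\beta\geq 0$ there whenever $\xi\in[-1,1]$) and admits at most a window of length $\ee$ on which it can be negative.

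\textbf{Upper bounds.} For \eqref{eq: upper bound exp int}, I split the inner range $[s,t-t_1]$ into the pre-bump piece, where $\xi_\beta\geq 1-c$ and $-2b\xi_\beta\leq -2b(1-c)<0$, and the remainder of length $\leq\ee$, where $-2b\xi_\beta\leq 4b$ by $\xi_\beta>-2$. The inner integral is therefore $\leq 4b\ee$, so the outer integrand is $\leq\exp(4b\ee)$, and an outer $s$-range of length $\leq 1/\rrho_D$ yields the bound $\exp(5b\ee)$ for $b$ large enough that $1/\rrho_D\leq\exp(b\ee)$. For \eqref{eq: dx xi <= 1} it suffices to prove $\int_{t_1}^t\xi_\beta(u,\theta,x)\,du\geq 0$: the hypothesis $t_1\leq 1/\rrho_D-5\ee$ ensures that the pre-bump piece of $[t_1,t]$ has length $\geq \min(t-t_1,4\ee)$, so either the whole integration lies in the pre-bump slab (with integrand $\geq 1-c>0$) or the positive contribution $(1-c)\cdot 4\ee\geq 3\ee$ dominates the worst-case negative contribution $\geq -2\ee$ from the bump-influenced region of length $\leq\ee$.

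\textbf{Lower bound, and the main obstacle.} The key observation for \eqref{eq: lower bound exp int} is that the terminal condition $\tilde\xi_{\beta,\theta}(x)\leq-3/4$ forces $\xi_\beta\leq -3/4$ throughout the entire post-bump slab $[1/\rrho_D-\eee,\,1/\rrho_D]$. Indeed, outside the bump $\xi_\beta$ satisfies the Riccati equation $\dot\xi=b(1-\xi^2)$, whose explicit solution shows that if $\xi_\beta$ at bump exit exceeded $-1$ by more than $O\bigl(e^{-2b\eee}\bigr)$, then for large $b$ the orbit would rise well above $-3/4$ by time $1/\rrho_D$, contradicting the hypothesis. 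Hence $\xi_\beta$ enters the post-bump slab essentially at $-1$ and, either monotonically decreasing from below or monotonically increasing from just above toward a value $\leq -3/4$, remains $\leq -3/4$ on the whole slab. Now choose $\RR_0<\RR$ so that once $\theta+\tau\rrho$ enters $B_{\RR_0}(\overline\theta)$, the strong negative forcing of $F_\beta$ near $\overline\theta$ has already driven $\xi_\beta$ below $0$; existence of such an $\RR_0$ follows by continuity of $F_\beta$ together with the non-degenerate maximum of $g$ at $\overline\theta$. For any $s$ with $\theta+(s+t_1)\rrho\in B_{\RR_0}(\overline\theta)$, the inner integral collects at least $2b\cdot(3/4)\cdot(\eee/2)=3b\eee/4$ from the sub-interval where $\xi_\beta\leq -3/4$ (of length $\geq \eee/2$ by $t\geq 1/\rrho_D-\eee/2$). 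Since the $s$-measure of $\{\theta+(s+t_1)\rrho\in B_{\RR_0}(\overline\theta)\}$ is a positive constant of order $\RR_0/|\rrho|$ independent of $b$, the total integral is $\geq(\RR_0/|\rrho|)\exp(3b\eee/4)\geq\exp(b\eee/2)$ for $b$ large. The main obstacle is precisely converting the terminal inequality $\tilde\xi_{\beta,\theta}(x)\leq-3/4$ into the uniform post-bump estimate $\xi_\beta\leq -3/4$: the explicit Riccati analysis above is what makes this possible, and once it is in hand, the remaining pieces slot in routinely.
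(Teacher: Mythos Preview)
Your arguments for \eqref{eq: upper bound exp int} and \eqref{eq: dx xi <= 1} are correct and coincide with the paper's: both use that $\xi_\beta\geq 1-c$ on the pre-bump interval and $\xi_\beta>-2$ globally, then compare the lengths $4\ee$ and $\ee$.

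For \eqref{eq: lower bound exp int}, however, there is a real gap in your choice of $R_0$. You want $R_0$ so that $\xi_\beta<0$ throughout $B_{R_0}(\overline\theta)$, but this is impossible for any $R_0$ bounded away from zero. Indeed, $\xi_\beta$ can only cross $0$ downward where $F_\beta(\cdot,0)=b-\beta b\,g/(1-b^{-1/2})<0$, i.e.\ where $g>(1-b^{-1/2})/\beta$, which confines the zero-crossing to a ball of radius $O(b^{-1/4})$ around $\overline\theta$. Hence on the \emph{entry} side of any fixed $B_{R_0}$ one still has $\xi_\beta>0$, and for such $s$ the inner integral $\int_s^{t-t_1}(-2b\xi_\beta)\,d\tau$ picks up a negative contribution of order $-bR_0$ that you never bound. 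Your appeal to ``continuity of $F_\beta$ together with the non-degenerate maximum'' does not produce the stated $R_0$.

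The paper sidesteps this entirely by a barrier argument at the level $-3/4$ rather than at $0$: one chooses $\tilde R<R$ with $h(\tilde R)<7/16$, so that $F_\beta(\theta,-3/4)=b\cdot 7/16-\beta b\,g(\theta)/(1-b^{-1/2})\geq 0$ for all $\theta\notin B_{\tilde R}(\overline\theta)$ and all $\beta$; such $\tilde R$ is fixed independently of $b$. Then $-3/4$ cannot be crossed downward outside $B_{\tilde R}$, so the terminal condition $\tilde\xi_{\beta,\theta}(x)\leq -3/4$ forces $\xi_\beta\leq -3/4$ for \emph{all} times after exit from $B_{\tilde R}$---not just on the post-bump slab. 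Setting $R_0=(R+\tilde R)/2$ and restricting $s$ to the exit annulus $B_{R_0}\setminus B_{\tilde R}$ (whose $s$-measure is $\geq (R_0-\tilde R)/\rho_D$), the inner integrand satisfies $\xi_\beta\leq -3/4$ on the whole range $[s,t-t_1]$, and the bound follows directly. Your Riccati analysis of the post-bump slab is correct but unnecessary once this barrier is in place; the missing idea is to work with the level $-3/4$, where a $b$-independent $\tilde R$ exists, rather than with the level $0$, where it does not.
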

\begin{proof}[Proof of the claim.]
The relations can be seen in a similar fashion as \eqref{eq: upper bound contraction rate}.
In particular, we make use of the fact that $\xi_\beta(\tau+t_1,\theta,x)\geq-2$ for all $\tau\in[0,1/\rrho_D-t_1]\supseteq[0,t-t_1]$ 
and $\xi_\beta(\tau+t_1,\theta,x)\geq 1-c$ for $\tau\leq 1/\rrho_D-\ee-t_1$.
For $x\in C$, this implies
\begin{align*}
 \exp\left(\int_s^{t-t_1}\!\d_x F_\beta\left((\tau+t_1)\rrho+\theta,\xi_\beta(\tau+t_1,\theta,x)\right)\, d\tau\right)
=\exp\left(-2b\int_s^{t-t_1}\! \xi_\beta(\tau+t_1,\theta,x)\,d\tau\right)
\leq
 \exp(4b\ee) 
\end{align*}
such that
\begin{align*}
 \int_0^{t-t_1}\!
\exp\left(\int_s^{t-t_1}\!
\d_x F_\beta\left((\tau+t_1)\rrho+\theta,\xi_\beta(\tau+t_1,\theta,x)\right)\, d\tau\right)
\, ds \leq 1/\rrho_D\cdot \exp(4b\ee), 
\end{align*}
which is smaller than $\exp(5b\ee)$ for big enough $b$.

For the second inequality, note that there is $0<\tilde \RR<\RR$ such that for big enough $b$ we have $F_\beta(\theta,-3/4)\geq0$ for all $\theta\notin B_{\tilde \RR}\left(\overline\theta\right)$ and all $\beta$. 
Hence, for all $\theta$ and $x$ as in the assumptions,
we necessarily have that $\xi_{\beta}(\tau,\theta,x)\leq-3/4$ 
for all 
$\tau\in[0,1/\rrho_D]$ with $[\theta+\tau\rrho,\theta+\w]\cap B_{\tilde\RR}\left(\overline\theta\right)=\emptyset$. 
Set $\RR_0=(\RR+\tilde\RR)/2$.
Then,
\begin{align*}
& \int_{0}^{t-t_1}\!
\mathbf{1}_{B_{\RR_0}\left(\overline\theta\right)}((s+t_1)\rrho+\theta)
\exp\left(\int_s^{t-t_1}\!
\d_x F_\beta\left((\tau+t_1)\rrho+\theta,\xi_\beta(\tau+t_1,\theta,x)\right)\, d\tau\right)
\, ds 
\\
& \geq\int_{0}^{t-t_1}\!
\mathbf{1}_{B_{\RR_0}\left(\overline\theta\right)\setminus B_{\tilde\RR}\left(\overline\theta\right)}((s+t_1)\rrho+\theta)
\exp\left(\int_s^{t-t_1}\!
\d_x F_\beta\left((\tau+t_1)\rrho+\theta,\xi_\beta(\tau+t_1,\theta,x)\right)\, d\tau\right)
\, ds 
\\
& \geq \left(\RR_0-\tilde\RR\right)/\rrho_D\cdot
\exp\left(-2b \int_{1/\rrho_D-\delta_2-t_1}^{t-t_1}\!
\xi_\beta(\tau+t_1,\theta,x)\, d\tau\right)
\geq 
\left(\RR_0-\tilde\RR\right)/\rrho_D\cdot
\exp\left(3/4\cdot b\eee\right)
\end{align*}
which is clearly bigger than $\exp(b\eee/2)$ for large enough $b$.

For the last relation, note that since $t_1\leq 1/\rrho_D-5\ee$, we have for $t\geq1/\rrho_D-\ee$ that
\begin{align*}
&\int_0^{t-t_1}\!
\d_x F_\beta\left((\tau+t_1)\rrho+\theta,\xi_\beta(\tau+t_1,\theta,x)\right)\, d\tau
\\
&=
\int_0^{1/\rrho_D-\ee-t_1}\!
\d_x F_\beta\left((\tau+t_1)\rrho+\theta,\xi_\beta(\tau+t_1,\theta,x)\right)\, d\tau
\\
&\phantom{\leq}+
\int_{1/\rrho_D-\ee-t_1}^{t-t_1}\!
\d_x F_\beta\left((\tau+t_1)\rrho+\theta,\xi_\beta(\tau+t_1,\theta,x)\right)\, d\tau\leq
-8b\ee\cdot(1-c)+4b\ee<0
\end{align*}
since $c<1/4$. Note that if $t<1/\rrho_D-\ee$, then \eqref{eq: dx xi <= 1} is obvious.
\end{proof}
\begin{proof}[Proof of Lemma~\ref{lem: nondegenerate bump}]
Let us fix some notation.
For the rest of this proof, $\vartheta$ is always assumed to be some element of $\R^d$ (the tangent space of $\T^d$ at any $\theta\in \T^d$) with $|\vartheta|=1$.
In contrast, $\Delta$ and $\Delta'$ always denote elements of $\mathbb S^d\ssq \R^D$ in the orthogonal complement of $\rrho$ in the following.
Set $T_\tau=\{\theta_0+\tau\rrho+\varepsilon\Delta\:\Delta \perp \rrho,\ |\Delta|=1 \text{ and }|\varepsilon|\leq\RR\}$ where $\tau\in(0,1/\rrho_D)$, that is,
$T_\tau$ is a $d$-dimensional disk of radius $\RR$ orthogonal to $\rrho$, centred at $\theta_0+\tau\rrho$. 
Similarly, set $\tilde T_\tau=\{\theta_0+\tau\rrho+\varepsilon\Delta\:\Delta \perp \rrho,\ |\Delta|=1 \text{ and }|\varepsilon|\leq\rr\}$ where
$\rr=\exp(-9b\ee)$. 
We denote by $P\theta$ the \emph{orthogonal projection} of $\theta\in\T^D$ onto $[\theta_0,\theta_0+\w]$ so that $\theta=P\theta+\eps \Delta$,
where $|\eps|$ is minimal
(with $\Delta\perp \rrho$ and $|\Delta|=1$ as above).\footnote{In the following, we only consider $P\theta$ for $\theta$ close to $[\theta_0,\theta_0+\w]$ 
so that we do not have
to worry about the well-definition of $P$.}
Set $t_1=1/(4\rrho_D)$ and note that--if $\RR$ is small enough--$T_{t_1}\cap \T^d=\emptyset$ and $[\I_0,T_{t_1}]\cap
B_\RR\left(\overline\theta\right)=\emptyset$.
Let $t_2$ be such that $T_{t_2}$ has a positive distance to $B_\RR\left(\overline \theta\right)$ and such that $T_{t_2}\cap \T^d=\emptyset$ 
and $[\I_0,T_{t_2}]\cap B_\RR\left(\overline \theta\right)=B_\RR\left(\overline \theta\right)$. 
Again, we might have to assume small enough $\RR$ in order for such $t_2$ to exist.
Finally, $t_3>t_2$ with $\tilde T_{t_3}\cap \T^d=\emptyset$ will be chosen to be close 
to $1/\rrho_D$ so that
within one iteration, orbits starting in $\tilde{\mc I}_0=\{\theta\in \T^d\: [\theta,\theta+\w]\cap {\tilde T}_{t_1}\neq \emptyset\}$ enter 
and leave the bump between $\tilde T_{t_1}$ and $\tilde T_{t_3}$ while the remaining time between $\tilde T_{t_3}$ and $\T^d$ will be negligibly short.
We let $t_i(\theta)\in[0,1/\rrho_D]$ be such that $\theta+t_i(\theta)\rrho \in T_{t_i}$ for $i=1,2$ and $\theta\in \I_0$. 
Likewise for $\theta\in \tilde \I_0$, we denote by $t_3(\theta)\in[0,1/\rrho_D]$ that time for which
$\theta+t_3(\theta)\rrho \in \tilde T_{t_3}$.
By considering small enough $\RR$, we may assume without loss of generality that $t_2(\theta)>1/\rrho_D-\eee/2$ for each $\theta\in \tilde \I_0$.
Note that the $t_i$'s are (restrictions of) affine linear maps whose derivatives are given by a constant matrix whose 
norm we denote by $\kappa$ for the rest of this proof (note that obviously $\textrm{d}t_1(\theta)=\textrm{d}t_2(\theta)=\textrm{d}t_3(\theta)$, 
where $\textrm{d}$ denotes the total derivative). 

The Hessian of $g(\theta) =h(|\theta-\overline\theta|)$ is easily seen to be
\begin{align*}
\textrm{d}^2 g(\theta)
=\textrm{d}\left(\frac{h'(|\theta-\overline\theta|)}{|\theta-\overline\theta|}(\theta-\overline\theta)\right)
=\left(\frac{h''(|\theta-\overline\theta|)}{|\theta-\overline\theta|^2}-
\frac{h'(|\theta-\overline\theta|)}{|\theta-\overline\theta|^3}\right)(\theta-\overline\theta)\cdot(\theta-\overline\theta)^\top+\frac{h'(|\theta-\overline\theta|)}{|\theta-\overline\theta|} I_D,
\end{align*}
where $I_D$ denotes the $D$-dimensional unit matrix. 
Hence for $\theta=P\theta+\eps \Delta'$, we have 
\begin{align}
\label{eq: d Delta g}
\d_\Delta g(\theta)&=\eps  \cdot \frac{h'(|\theta-\overline\theta|)}{|\theta-\overline\theta|} \langle \Delta',\Delta \rangle
\quad \text{and}\\
\label{eq: d 2 Delta g}
\d^2_\Delta g(\theta)&=\eps^2\cdot \left(\frac{h''(|\theta-\overline\theta|)}{|\theta-\overline\theta|^2}-
\frac{h'(|\theta-\overline\theta|)}{|\theta-\overline\theta|^3}\right){\langle \Delta',\Delta \rangle}^2+\frac{h'(|\theta-\overline\theta|)}{|\theta-\overline\theta|},
\end{align}
where $\langle\cdot,\cdot\rangle$ denotes the standard inner product in $\R^D$.

Having in mind \eqref{eq: d2 theta xi}, we see that in order to show the positivity of the second derivatives of $\xi_\beta$ 
with respect to $\vartheta \in \mathbb S^{d-1}$, we need  small enough upper bounds on the respective first derivatives in order to ensure that
the leading term under the integral is the one containing $\d^2_\vartheta F_\beta$.
To that end, we divide the iteration of an orbit starting at $(\theta,x)\in\tilde\I_0\times C$ into three time intervals
(see Figure~\ref{fig: subdivision of an iteration}).
\begin{figure}
\centering 
\includegraphics[scale=0.6]{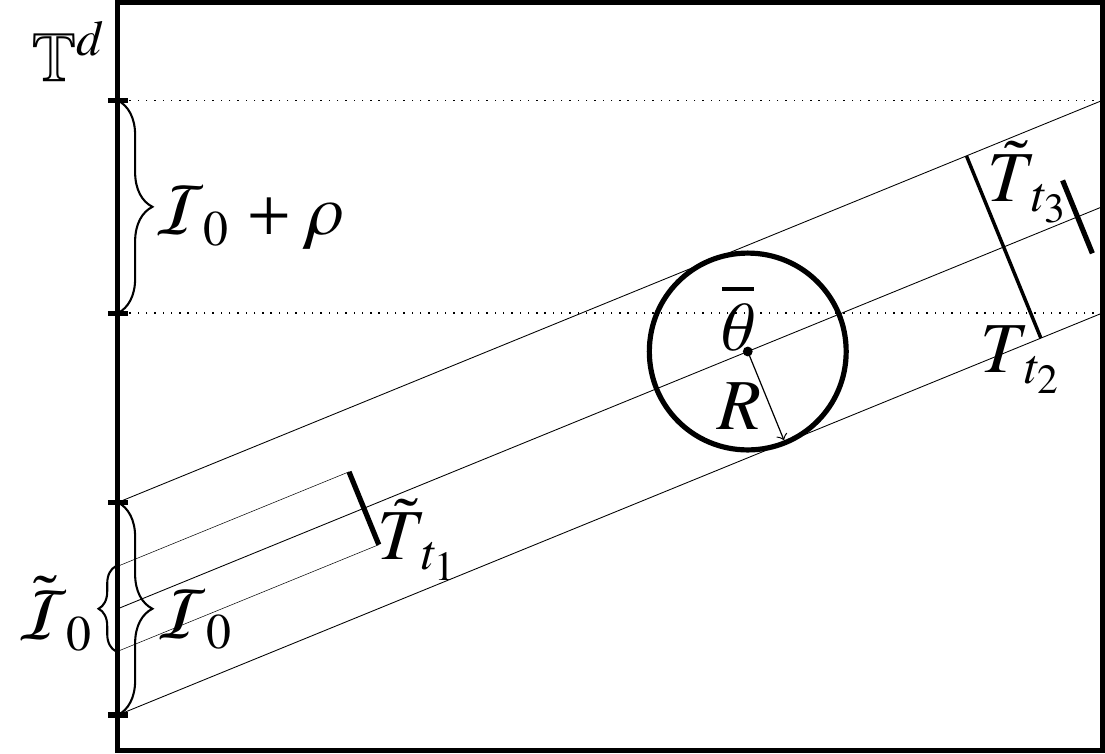}
\caption{The base space for $D=2$. 
We subdivide one iteration into three subsequent iterations: first, from $\tilde \I_0\ssq\T^d$ to $\tilde T_{t_1}$. 
Then, further to $\tilde T_{t_3}$. 
Finally, from $\tilde T_{t_3}$ to $\tilde\I_0+\w\ssq \T^d$. 
If $\theta_0+\tau\rrho\in B_\RR\left(\overline\theta\right)$, the disks $\tilde T_\tau$ are sections of the tangents of the level sets of $g$ 
at $\theta_0+\tau\rrho$.} 
\label{fig: subdivision of an iteration}
\end{figure}
Within the first interval, $[0,t_1(\theta)]$, variation with respect to $\theta$ only occurs due to the $\theta$-dependence of $t_1(\theta)$
which turns out to be negligible. 
The last time interval, $[t_3(\theta),1/\rrho_D]$, will turn out to be negligible as we can assume its length to be small.
For the intermediate time interval $[t_1(\theta),t_3(\theta)]$, the crucial point is that by the choice of the sets $\tilde T_\tau$ 
perpendicular to $\rrho$ and hence parallel to the level sets of $g$ at the point $\theta_0+\tau\rrho$, the derivatives with respect to $\vartheta$
become derivatives with respect to some $\Delta\perp\rrho$.
By \eqref{eq: d Delta g}, this implies that in a distance $\eps=\rr$ of $\theta_0+\tau\rrho$ (where $\tau\in[t_1(\theta),t_3(\theta)]$),
these derivatives are exponentially small in $b$ (recall that $\rr=\exp[-9b\delta_1]$).

While the first derivatives with respect to $\vartheta$ are thus negligible, we will show in Claim~\ref{claim: second derivatives} 
that $(\d^2_\Delta\xi_\beta)(\tau-t_1,\theta+t_1(\theta)\rrho,\xi_\beta(t_1(\theta),\theta,x))$ is bounded away from $0$ for each $\tau\in[t_2,t_3]$,
provided $\xi_\beta(1/\rrho_D,\theta,x)$ is not too far from $E$. 
In conclusion, we will show that $\d^2_\vartheta\xi_\beta(1/\rrho_D,\theta,x)$ is bounded away from $0$.
Together with the next claim, this will finish the proof of Lemma~\ref{lem: nondegenerate bump}.
\begin{claim}\label{claim: u(theta)}
Under the assumption of Lemma~\ref{lem: nondegenerate bump}, 
suppose there is $c_0>0$ (independent of $b$) such that $({\d^2_\Delta\xi_\beta})(t_2-t_1,\theta+t_1(\theta)\rrho,\xi_\beta(t_1(\theta),\theta,x))>c_0$ 
for all $\theta\in \tilde \I_0$ and $x\in C$ with ${\tilde\xi}_{\beta,\theta}(x)\leq-3/4$.

Then there is a closed and convex set $\mc J_{0,\beta}\ssq{\tilde \I}_{0}$  such that $\tilde\xi_{\beta,\theta}(1-c)>-1+\exp(-b/(2\rrho_D))$ if $\theta\notin\mc J_{0,\beta}$. Further, ${\tilde\xi}_{\beta,\theta}(x)\leq-3/4$ for each $\theta\in \mc J_{0,\beta}$ and $x\in C$, and $\mc J_{0,\beta}\ssq\mc J_{0,\beta'}$ for $\beta\leq \beta'\in[\beta_-,\beta_+]$.
\end{claim}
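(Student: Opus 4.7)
I would set
\[
\mc J_{0,\beta}\=\left\{\theta\in\tilde \I_0\: \tilde\xi_{\beta,\theta}(1+c)\leq-3/4\right\}
\]
and verify the five required properties in turn. Three are essentially immediate: the pointwise upper bound ``$\tilde\xi_{\beta,\theta}(x)\leq-3/4$ for $\theta\in\mc J_{0,\beta}$ and $x\in C$'' follows from monotonicity of $x\mapsto\tilde\xi_{\beta,\theta}(x)$ guaranteed by $(\mc A\ref{axiom: 1c})$; the $\beta$-monotonicity $\mc J_{0,\beta}\ssq\mc J_{0,\beta'}$ is immediate from $(\mc A\ref{axiom: 6c})$; closedness follows from continuity of $\theta\mapsto\tilde\xi_{\beta,\theta}(1+c)$ together with closedness of $\tilde \I_0$; and the inclusion $\mc J_{0,\beta}\ssq\tilde \I_0$ is built into the definition.

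\textbf{Gap-closing.} The remaining conclusion $\tilde\xi_{\beta,\theta}(1-c)>-1+\exp(-b/(2\rrho_D))$ for $\theta\notin\mc J_{0,\beta}$ says that the image of the contraction region $C$ under $\tilde\xi_{\beta,\theta}$ cannot land in the gap $(-1+\exp(-b/(2\rrho_D)),-3/4]$. For $\theta\in\tilde \I_0\setminus\mc J_{0,\beta}$ we have $\tilde\xi_{\beta,\theta}(1+c)>-3/4$ by definition, so the mean value theorem combined with the contraction bound of Proposition~\ref{prop: expansion in E contraction in C}(a) yields
\[
\tilde\xi_{\beta,\theta}(1-c)\geq\tilde\xi_{\beta,\theta}(1+c)-2c\alpha_c>-3/4-2c\alpha_c,
\]
which exceeds $-1+\exp(-b/(2\rrho_D))$ for sufficiently large $b$, since $\alpha_c$ decays exponentially under the standing assumption $\ee<1/(36\rrho_D)$. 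For $\theta\notin\tilde \I_0$, one distinguishes two cases: if $\theta\notin\I_0$, the Claim preceding Proposition~\ref{prop: expansion in E contraction in C} together with monotonicity of $\tilde\xi_{\beta,\theta}$ in $x$ gives $\tilde\xi_{\beta,\theta}(1-c)\geq\tilde\xi_{\beta,\theta}(-1+\exp(-b/(2\rrho_D)))>1-c$; if $\theta\in\I_0\setminus\tilde \I_0$, the orbit $\theta+\tau\rrho$ stays at perpendicular distance $\geq\rr=\exp(-9b\ee)$ from the orbit through $\overline\theta$, hence $g\leq h(\rr)\leq 1-c_1\rr^2$ along the orbit (using $h''(0)<0$), and a direct ode-comparison argument in the spirit of the proof of $(\mc A\ref{axiom: 8c})$ excludes the gap.

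\textbf{Main obstacle: convexity.} The crucial property is the convexity of $\mc J_{0,\beta}$. Writing $u_\beta(\theta)=\tilde\xi_{\beta,\theta}(1+c)$, I would prove that $\d_\vartheta^2 u_\beta(\theta)>0$ for every $\vartheta\in\mathbb S^{d-1}$ and every $\theta$ in a neighbourhood of $\mc J_{0,\beta}$ in $\tilde \I_0$, which by strict convexity forces the sublevel set $\mc J_{0,\beta}=\{u_\beta\leq-3/4\}$ to be convex. Starting from \eqref{eq: d2 theta xi} at $t=1/\rrho_D$ and $x=1+c$, I would split the time integration into the three windows $[0,t_1(\theta)]$, $[t_1(\theta),t_2(\theta)]$, and $[t_2(\theta),1/\rrho_D]$. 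On the first window $\theta+s\rrho$ has not yet reached $B_\RR(\overline\theta)$, so $\d_\vartheta F_\beta$ and $\d_\vartheta^2 F_\beta$ both vanish and the only $\vartheta$-dependence enters via the affine-linear functions $t_i$ of fixed derivative norm $\kappa$; on the third window, the length $1/\rrho_D-t_3$ can be made arbitrarily small by choice of $t_3$. The middle window carries the dominant contribution: after decomposing $\vartheta\in\R^d\hookrightarrow\R^D$ as $\vartheta=\Delta+\lambda\rrho$ with $\Delta\perp\rrho$, the cocycle identity
\[
\xi_\beta(\tau,\theta,1+c)=\xi_\beta(\tau-t_1(\theta),\theta+t_1(\theta)\rrho,\xi_\beta(t_1(\theta),\theta,1+c))
\]
turns the inner second derivative at time $t_2$ into $\d_\Delta^2\xi_\beta(t_2-t_1,\cdot,\cdot)$, which by hypothesis exceeds $c_0$, while the accompanying exponential weight is controlled by Claim~\ref{claim: int exp (int F(xi)) is small}. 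The hardest technical point will be to prevent the mixed term $2\d_x\d_\vartheta F_\beta\cdot\d_\vartheta\xi_\beta$ and the quadratic term $\d_x^2 F_\beta\cdot(\d_\vartheta\xi_\beta)^2$ in \eqref{eq: d2 theta xi} from overwhelming the favourable $\d_\vartheta^2 F_\beta$ contribution; this requires an exponentially small (in $b$) bound on $\d_\vartheta\xi_\beta$ on the high-forcing portion of the orbit, obtained by exploiting the flatness of $g$ near $\overline\theta$—captured quantitatively by \eqref{eq: d Delta g} and the small radius $\rr$ of the cross-section $\tilde T_\tau$.
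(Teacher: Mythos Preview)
Your ``easy'' parts (closedness, $\beta$-monotonicity, the bound $\tilde\xi_{\beta,\theta}(x)\leq-3/4$ on $\mc J_{0,\beta}$, and the gap-closing for $\theta\in\tilde\I_0\setminus\mc J_{0,\beta}$ and $\theta\notin\I_0$) are fine. The two substantive steps, however, both have real problems.

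\textbf{Convexity.} Your plan is to show $\d_\vartheta^2 u_\beta>0$ on a neighbourhood of $\mc J_{0,\beta}$ and conclude that the sublevel set is convex. Two objections. First, the hypothesis only gives the $c_0$-bound where $\tilde\xi_{\beta,\theta}(x)\leq-3/4$; with your definition this is \emph{exactly} $\mc J_{0,\beta}$ for $x=1+c$, not a neighbourhood. Second, and more fundamentally, positive Hessian on a sublevel set does not force that sublevel set to be convex: take $u(x)=(x^2-1)^2$, whose sublevel set $\{u\leq 1/4\}$ is a union of two intervals while $u''=12x^2-4>0$ there. You would need strict convexity of $u_\beta$ on a convex superset (for instance all of $\tilde\I_0$), and nothing in the hypothesis provides that. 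Moreover, the passage from the hypothesised $\Delta$-derivative at time $t_2-t_1$ to the full $\vartheta$-derivative of $\tilde\xi$ at time $1/\rrho_D$ is precisely the content of the remainder of Lemma~\ref{lem: nondegenerate bump} \emph{after} this claim---so you are effectively proposing to import that machinery backwards into the claim, which both duplicates work and requires the conclusion $\tilde\xi_{\beta,\theta}(x)\leq-3/4$ already. The paper avoids all of this: it works on the cross-section $T_{t_2}$ with the auxiliary function $u(\theta)=\xi_\beta(t_2(\theta'),\theta',1)$ and observes, purely from the rotational symmetry $g(\cdot)=h(|\cdot-\overline\theta|)$, that $u(\theta)=\hat u(|\theta-(\theta_0+t_2\rrho)|)$ for a non-decreasing $\hat u$. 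Sublevel sets of such a radial function are disks, and their affine pull-backs to $\T^d$ are ellipsoids---so convexity is free, no second derivatives needed.

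\textbf{Gap-closing on $\I_0\setminus\tilde\I_0$.} The comparison ``$g\leq h(\rr)\leq 1-c_1\rr^2$'' with $\rr=\exp(-9b\ee)$ is far too weak for a direct ode-comparison in the spirit of $(\mc A\ref{axiom: 8c})$: the weakening of the forcing term is of order $b\,\rr^2$, which is exponentially small in $b$, while the reference orbit through $\theta_0$ at $\beta\in[\beta_-,\beta_+]$ can still reach well below $-1+\exp(-b/(2\rrho_D))$. This is exactly where the paper spends the $c_0$-hypothesis (not on convexity): using the radial function $u$ it gets $u(\theta)\geq u(\theta_0+t_2\rrho)+c_0\rr^2\geq -1-\tfrac12\exp(-b/\rrho_D)+c_0\exp(-18b\ee)$ on the boundary of $\tilde T_{t_2}$, and the standing assumption $\ee<1/(36\rrho_D)$ is tuned precisely so that $c_0\exp(-18b\ee)\gg\exp(-b/(2\rrho_D))$. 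That is how $\mc J_{0,\beta}\ssq\tilde\I_0$ is obtained, and your ode-comparison sketch does not replace it.
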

\begin{proof}[Proof of the claim]
For the rest of this proof, given $\theta\in T_{t_2}$, we denote by $\theta'$ that point in $\I_0$ for which 
$\theta=\theta'+t_2(\theta')\rrho$.

The map $u\:T_{t_2}\ni\theta\mapsto \xi_\beta(t_2(\theta'),\theta',1)$--we keep the dependence of $u$ on $\beta$
implicit--assumes its minimum in $\theta_0+t_2\rrho$ and moreover satisfies 
\begin{align}\label{eq: bump over T t2}
u(\theta)=\hat u(|\theta-(\theta_0+t_2\rrho)|),
\end{align} 
where $\hat u\:[0,\RR]\to\X$ is some non-decreasing function. This can be seen as follows:
First, we see that $\xi_\beta(t_1(\theta'),\theta',1)=1$ for each $\theta'\in\I_0$ since $F_\beta(\theta'+\tau\rrho,1)=0$
for all $\tau\in[0,t_1(\theta')]$ by definition of $t_1$. 
Hence, $u(\theta)=\xi_\beta(t_2-t_1,\theta-(t_2-t_1)\rrho,1)$.
Now note that for $\tau\in[0,t_2-t_1]$, we have
\begin{align*}
|\theta-(t_2-t_1)\rrho+\tau\rrho-\overline\theta|^2
&=|\theta-(t_2-t_1)\rrho+\tau\rrho-(\theta_0+(t_1+\tau)\rrho)|^2+
|\theta_0+(t_1+\tau)\rrho-\overline\theta|^2\\
&=|\theta-(\theta_0+t_2\rrho)|^2+
|\theta_0+(t_1+\tau)\rrho-\overline\theta|^2.
\end{align*}
Since $g(\cdot)=h(|(\cdot)-\overline\theta|)$, we therefore have
that there is $\hat F\: T_{t_2}\times [0,t_2-t_1]\times \X\to\R$ with $F_\beta(\theta-(t_2-t_1)\rrho+\tau\rrho,x)=\hat F(|\theta-(\theta_0+t_2\rrho)|,\tau,x)$, 
where $\hat F$ is non-decreasing in the first coordinate.
This proves \eqref{eq: bump over T t2}.

Set
\begin{align*}
 \mc J_{0,\beta}=\left\{\theta'\in { \I}_0\: u(\theta)\leq -1+\exp(-b/(2\rrho_D))+1/2 \cdot\exp(-b/\rrho_D)\right\}.
\end{align*}
Obviously, $\mc J_{0,\beta}$ is closed and $\mc J_{0,\beta}\ssq \mc J_{0,\beta'}$ for $\beta'\geq\beta$.
The convexity of $\mc J_{0,\beta}$ follows from \eqref{eq: bump over T t2}.
It hence remains to show that for sufficiently large $b$ we have $\mc J_{0,\beta}\ssq{\tilde \I}_{0}$, $\tilde\xi_{\beta,\theta}(1-c)>-1+\exp(-b/(2\rho_D))$ for $\theta'\notin\mc J_{0,\beta}$, and ${\tilde\xi}_{\beta,\theta'}(x)\leq-3/4$ for all $\theta'\in\mc J_{0,\beta}$ and $x\in C$.

First, we show $\tilde\xi_{\beta,\theta}(1-c)>-1+\exp(-b/(2\rho_D))$ for $\theta'\notin\mc J_{0,\beta}$.
Obviously, 
\[\xi_\beta(1/\rrho_D,\theta',1-c)\leq-1+\exp[-b/(2\rrho_D)]\]
if and only if
\[\xi_\beta(t_2(\theta'),\theta',1-c)\leq\xi_\beta^-(1/\rrho_D-t_2(\theta'),\theta'+\w,-1+\exp[-b/(2\rrho_D)])\] where $\xi_\beta^-(t,\theta',x)=\xi_\beta(-t,\theta',x)$.
Similarly to Proposition~\ref{prop: expansion in E contraction in C} \eqref{label: claim a}, we get for each $x\in C$ that  $\d_x\xi_\beta(t_2(\theta'),\theta',x)\leq
 \exp(-2b(1-c)(1/\rrho_D-\ee)+4b\ee)$, which is smaller than $\exp(-b/\rrho_D)$, since $\ee<1/(36\rrho_D)$ and $c<1/4$.
Therefore, 
\begin{align}\label{eq: width of Xi(C)}
|\xi_\beta(t_2(\theta'),\theta',1+c)-\xi_\beta(t_2(\theta'),\theta',1-c)|\leq 2c \exp(-b/\rrho_D) \leq 1/2 \exp(-b/\rrho_D). 
\end{align}
In particular, this implies
$|u(\theta)-\xi_\beta(t_2(\theta'),\theta',1-c)|< 1/2 \cdot\exp(-b/\rrho_D)$.
As further $\xi_\beta^-(1/\rrho_D-t_2(\theta'),\theta'+\w,-1+\exp[-b/(2\rrho_D)])\leq-1+\exp[-b/(2\rrho_D)]$ 
(due to Proposition~\ref{prop: radial basic observations} \eqref{label b prop radial basic observations}), this yields that
$\xi_\beta(t_2(\theta'),\theta',1-c)>\xi_\beta^-(1/\rrho_D-t_2(\theta'),\theta'+\w,-1+\exp[-b/(2\rrho_D)])$ if 
\begin{align}\label{eq: u > -1+e}
u(\theta)>-1+\exp(-b/(2\rrho_D))+1/2 \exp(-b/\rrho_D). 
\end{align}
Hence, $\tilde\xi_{\beta,\theta}(1-c)>-1+\exp(-b/(2\rho_D))$ for $\theta'\notin\mc J_{0,\beta}$.

Given $\theta'\in{\I}_0$ with $u(\theta)=\xi_\beta(t_2(\theta'),\theta',1)\geq-1$,
there is $y\in[-1,u(\theta)]$ such that
\begin{align*}
\tilde\xi_{\beta,\theta'}(1)
&=\xi_\beta(1/\rho_D-t_2(\theta'),\theta,-1)+
\d_x\xi_\beta(1/\rho_D-t_2(\theta'),\theta,y)\cdot|-1-u(\theta)|\\
&\leq -1+\exp(b\delta_2)\cdot|-1-u(\theta)|
\end{align*}
where we used \eqref{eq: d x xi} (recall that $t_2(\theta)>1/\rho_D-\delta_2/2$) and the fact that $\xi_\beta(1/\rho_D-t_2(\theta'),\theta,-1)=-1$.
Thus, ${\tilde\xi}_{\beta,\theta'}(1)>-7/8$ necessarily means 
$u(\theta)\geq-1+1/8 \exp(-b\delta_2)$ which is bigger than
the right-hand side of \eqref{eq: u > -1+e}
for large enough $b$ as $\eee<\ee\leq1/(36\rho_D)$.
Hence, ${\tilde\xi}_{\beta,\theta'}(x)\leq-3/4$ for all $\theta'\in\mc J_{0,\beta}$ and $x\in C$.

We are left to show that $\mc J_{0,\beta}\ssq{\tilde \I}_{0,\beta}$,
which is equivalent to showing that \eqref{eq: u > -1+e} holds for each $\theta\in T_{t_2}\setminus\tilde T_{t_2}$.
By the above, we may assume without loss of generality that
${\tilde\xi}_{\beta,\theta'}(1)\leq-3/4$ for all $\theta'\in{\tilde\I}_0$ so that $({\d^2_\Delta\xi_\beta})(t_2-t_1,\theta'+t_1(\theta')\rrho,1)>c_0$ by the hypothesis of this claim.
Note that by definition of $\beta_+$ and due to Proposition~\ref{prop: radial basic observations} \eqref{label a prop radial basic observations}, it follows from \eqref{eq: width of Xi(C)} that
$u(\theta)\geq u(\theta_0+t_2\rrho)\geq -1-1/2 \exp(-b/\rrho_D)$. 
Hence, for $\theta$ on the boundary of $\tilde T_{t_2}$, we get by means of the lower bound $c_0$ on the second derivatives that
\begin{align*}
 u(\theta)&\geq u(\theta_0+t_2\rrho)+ c_0 \cdot |\theta-(\theta_0+t_2\rrho)|^2\geq
-1-1/2 \exp(-b/\rrho_D)+c_0 r_b^2\\
&=-1-1/2 \exp(-b/\rrho_D)+c_0 \exp(-18b\ee)\\
&>
-1+\exp[-b/(2\rrho_D)]+1/2 \exp(-b/\rrho_D)
\end{align*}
for large enough $b$ as $\ee<1/(36\rrho_D)$.
By means of \eqref{eq: bump over T t2}, this proves \eqref{eq: u > -1+e} for all $\theta\in T_{t_2}\setminus\tilde T_{t_2}$.
\end{proof}
It remains to compute upper bounds on the first derivatives $\d_\vartheta\xi_\beta$ 
and lower bounds for the second derivatives $\d^2_\vartheta\xi_\beta$.
For $\theta\in \tilde \I_0$ and $x \in C$, we have
\begin{align}\label{eq: d vartheta xi with vartheta in T d}
\begin{split} 
\left|\d_\vartheta \xi_\beta(t_1(\theta),\theta,x)\right|
&\leq 
\left|(\d_\vartheta \xi_\beta)(t_1(\theta),\theta,x)\right|+\left|\d_t \xi_\beta( t_1(\theta),\theta,x)\cdot  \d_\vartheta t_1(\theta)\right|
\\&
\leq\kappa (1+c)^2 b.
\end{split}
\end{align}
This is due to the fact that $\left(\d_\vartheta \xi_\beta\right)(t_1(\theta),\theta,x)=0$ (see \eqref{eq: d theta xi} 
and recall that $[\tilde \I_0,\tilde T_{t_1}]\cap B_\RR(\overline\theta)=\emptyset$) and 
because $\xi_\beta( t_1(\theta),\theta,x)\in C$ for all $(\theta,x)\in \T^d\times C$ such that
\begin{align}\label{eq: d t xi beta( t1(theta),theta,x)} 
|\d_t \xi_\beta( t_1(\theta),\theta,x)|&=|F_\beta(\theta+t_1(\theta)\rrho,\xi_\beta[t_1(\theta),\theta,x])|\leq (1+c)^2 b.
\end{align}

For $t\in[t_1,t_3]$, $\theta\in\tilde\I_0$ and $x\in C$, we further have
\begin{align}\label{eq: d vartheta' xi}
\begin{split}
&\left|\left(\d_{\Delta} \xi_\beta\right)\left(t-t_1,\theta+ t_1(\theta)\rrho,\xi_\beta(t_1(\theta),\theta,x)\right)\right|\\
&\leq
\int_{0}^{t-t_1} \left|(\d_{\Delta} F_\beta)\left(\theta+[s+t_1(\theta)]\rrho, \xi_\beta[s+t_1(\theta),\theta,x]\right)\right|\\
&\phantom{\leq}\cdot \exp\left(\int_s^{t-t_1}\!
(\d_x F_\beta)\left(\theta+[\tau+t_1(\theta)]\rrho, \xi_\beta[\tau+t_1(\theta),\theta,x]\right)\, d\tau\right)
\ ds\\
&\leq \iota \cdot h''(0) \cdot b/(1-b^{-1/2}) \rr
\\
&\phantom{\leq}\cdot \int_0^{t-t_1}
\exp\left(\int_s^{t-t_1}\!
(\d_x F_\beta)\left(\theta+[\tau+t_1(\theta)]\rrho, \xi_\beta[\tau+t_1(\theta),\theta,x]\right)\, d\tau\right)
\, ds\\
&\leq\ \iota\cdot h''(0)\cdot b/(1-b^{-1/2}) \exp(5b\ee) \rr\leq
\exp(6b\ee) \rr 
\end{split}
\end{align}
for sufficiently large $b$, where we used 
\eqref{eq: d Delta g} in the second step (with $\rrr$ such that $|h'(y)/y|\leq \rrr| h''(0)|$ for all $y\geq0$) and
\eqref{eq: upper bound exp int} in the second to the last step. 
Observe that \eqref{eq: d vartheta' xi} is an upper bound on $|(\d_{\Delta} \xi_\beta )(t-t_1,\theta+ t_1(\theta)\rrho,x )|$ for 
\emph{all} $\Delta\perp\rrho$ of length $1$.

Now, the derivative of the map
$\tilde\I_0\times C \ni(\theta,x)\mapsto\xi_\beta\left(t-t_1+t_1(\theta),\theta,x\right)$
in direction of an arbitrary $\vartheta\in \R^d$ with $|\vartheta|=1$ is given by
\begin{align}\label{eq: d vartheta xi on the interesting section}
\begin{split}
 &\d_\vartheta \xi_\beta\left(t-t_1+t_1(\theta),\theta,x\right)=
 \d_\vartheta \xi_\beta\left(t-t_1,\theta+ t_1(\theta)\rrho,\xi_\beta( t_1(\theta),\theta,x)\right)\\
&=
\left(\textrm{d}_{\theta} \xi_\beta\right)\left(t-t_1,\theta+ t_1(\theta)\rrho,\xi_\beta( t_1(\theta),\theta,x)\right)\cdot
(\vartheta+\d_\vartheta t_1(\theta)\rrho)
\\
&\phantom{=}+
 \left(\d_x \xi_\beta\right)\left(t-t_1,\theta+t_1(\theta)\rrho,\xi_\beta( 
t_1(\theta),\theta,x)\right)\cdot 
\d_\vartheta \xi_\beta(t_1(\theta),\theta,x)
\\
&=
|\vartheta+\d_\vartheta t_1(\theta)\rrho| \cdot \left(\d_{\Delta} \xi_\beta\right)\left(t-t_1,\theta+t_1(\theta)\rrho,\xi_\beta(t_1(\theta),\theta,x)\right)
\\&\phantom{=}+
 \left(\d_x 
\xi_\beta\right)\left(t-t_1,\theta+t_1(\theta)\rrho,\xi_\beta(t_1(\theta),\theta,x)\right)\cdot \d_\vartheta \xi_\beta(t_1(\theta),\theta,x)
\end{split}
\end{align}
where $(\textrm{d}_\theta \xi_\beta)(t,\theta,x)$ denotes the total derivative of the map $\theta\mapsto\xi_\beta(t,\theta,x)$ 
(for fixed $t$ and $x$) and $\Delta= (\vartheta+\d_\vartheta t_1(\theta)\rrho)/|\vartheta+\d_\vartheta t_1(\theta)\rrho|$ is indeed orthogonal to $\rrho$.
Note that due to \eqref{eq: dx xi <= 1}, $(\d_x \xi_\beta)(t-t_1,\theta+t_1(\theta)\rrho,\xi_\beta[t_1(\theta),\theta,x])\leq1$ for all $t\in [t_1,1/\rrho_D]$
since $t_1(\theta)\leq t_1+\RR<t_1+\ee<1/\rrho_D-5\ee$ (recall that $t_1=1/(4\rrho_D)$).
By means of \eqref{eq: d vartheta xi with vartheta in T d} and \eqref{eq: d vartheta' xi}, we hence have
\begin{align}\label{eq: d vartheta xi on the interesting section estimate}
 \left|\d_\vartheta \xi_\beta\left(t-t_1+t_1(\theta),\theta,x\right)\right|\leq (1+\kappa|\rrho|) \exp(6b\ee) \rr+\kappa (1+c)^2 b.
\end{align}

We thus have upper bounds on the first derivatives of $\xi_\beta$ with respect to $\Delta$ and $\vartheta$.
We proceed with the second derivatives.
\begin{claim}\label{claim: second derivatives}
$({\d^2_\Delta\xi_\beta})(t-t_1,\theta+t_1(\theta)\rrho,\xi_\beta(t_1(\theta),\theta,x))>\exp(b\eee/2)$ 
 for all $\theta\in \tilde \I_0$, $t\in[t_2,t_3]$, and $x\in C$ with ${\tilde\xi}_{\beta,\theta}(x)\leq-3/4$.
\end{claim}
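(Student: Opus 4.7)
The plan is to apply the general variational formula \eqref{eq: d2 theta xi} at the base point $\theta^* := \theta + t_1(\theta)\rrho$ with initial value $x^* := \xi_\beta(t_1(\theta),\theta,x)$, in the direction $\vartheta = \Delta$ perpendicular to $\rrho$. The crucial structural observation is that the vector field \eqref{eq: radial x-square family} splits additively in $x$ and $\theta$, so that the mixed derivative $\d_x\d_\Delta F_\beta$ vanishes identically, $\d_x^2 F_\beta\equiv -2b$, and $\d_\Delta^2 F_\beta=-\beta b/(1-b^{-1/2})\,\d_\Delta^2 g$. Consequently, the integrand in \eqref{eq: d2 theta xi} collapses to the clean sum
\[
-2b\,(\d_\Delta \xi_\beta(s,\theta^*,x^*))^2 \;-\; \frac{\beta b}{1-b^{-1/2}}\,\d_\Delta^2 g(\theta^* + s\rrho),
\]
and I will control its two summands separately.

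For the non-positive first summand, I invoke the already-established upper bound \eqref{eq: d vartheta' xi}, giving $|\d_\Delta \xi_\beta(s,\theta^*,x^*)|\leq\exp(6b\ee)\rr$ for all $s\in[0,t-t_1]$, whence $(\d_\Delta \xi_\beta)^2\leq\exp(-6b\ee)$ since $\rr=\exp(-9b\ee)$. Combined with \eqref{eq: upper bound exp int}, this summand contributes at least $-2b\exp(-b\ee)$ to $\d_\Delta^2 \xi_\beta$, which is exponentially negligible. For the positive second summand, I exploit the non-degeneracy of the bump at $\overline\theta$: computing the Hessian of $g(\theta)=h(|\theta-\overline\theta|)$ at $\overline\theta$ gives $\textrm{d}^2 g(\overline\theta)=h''(0)\,I_D$, and since $h''(0)<0$, continuity of the Hessian allows me to shrink $\RR_0$ (keeping $\RR_0>\tilde\RR$, which is compatible with the auxiliary claim) so that $\d_\Delta^2 g(\theta)\leq h''(0)/2<0$ uniformly for $\theta\in B_{\RR_0}(\overline\theta)$ and all unit $\Delta\perp\rrho$. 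With $\beta\geq\beta_-$ bounded away from zero for large $b$ (which is implicit in Claim~\ref{claim: crossing is possible in continuous time}, where one sees $\beta_-\to 1$), this yields a pointwise lower bound $-\beta b/(1-b^{-1/2})\,\d_\Delta^2 g(\theta^*+s\rrho)\geq c_1 b$ on the set $\{\theta^*+s\rrho\in B_{\RR_0}(\overline\theta)\}$ for some constant $c_1>0$ independent of $b$.

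Since the hypotheses $\tilde\xi_{\beta,\theta}(x)\leq -3/4$ and $t\geq t_2>1/\rrho_D-\eee/2$ are in force, the auxiliary lower bound \eqref{eq: lower bound exp int} applies and, combined with the previous pointwise estimate, produces a positive contribution of size at least $c_1 b\exp(b\eee/2)$. Adding the negligible negative part yields
\[
\d_\Delta^2 \xi_\beta(t-t_1,\theta^*,x^*)\;\geq\; c_1 b\exp(b\eee/2)-2b\exp(-b\ee),
\]
which comfortably exceeds $\exp(b\eee/2)$ for all sufficiently large $b$. The main obstacle in this plan is the uniform strict concavity statement for $g$ on $B_{\RR_0}(\overline\theta)$, which is where the non-degeneracy assumption $h''(0)<0$ enters essentially; everything else amounts to careful bookkeeping that matches the hypotheses of \eqref{eq: upper bound exp int} and \eqref{eq: lower bound exp int} to the present geometric setting, together with the already-proven first-derivative bound \eqref{eq: d vartheta' xi}.
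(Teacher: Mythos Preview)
Your overall architecture matches the paper's: you correctly specialize \eqref{eq: d2 theta xi} to the situation where the mixed term vanishes, bound the $(\d_\Delta\xi_\beta)^2$ contribution using \eqref{eq: d vartheta' xi} and \eqref{eq: upper bound exp int}, and extract the dominant positive contribution from $-\d_\Delta^2 g$ via \eqref{eq: lower bound exp int}. However, your treatment of $\d_\Delta^2 g$ has two genuine gaps.

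\textbf{First gap: the uniform concavity on $B_{\RR_0}(\overline\theta)$.} Your argument ``continuity of the Hessian allows me to shrink $\RR_0$ (keeping $\RR_0>\tilde\RR$)'' does not go through in general. The radius $\tilde\RR$ is determined solely by where $F_\beta(\theta,-3/4)$ changes sign, hence roughly by $h^{-1}(7/16)$, and can be a fixed positive number depending only on $h$. There is no reason the Hessian of $g$ stays close to $h''(0)I_D$ on a ball of that size: one can build $h$ with $h''(0)$ very negative but $h'$ (and $h''$) nearly zero on most of $(0,\tilde\RR)$, so that $\d_\Delta^2 g$ is close to zero there and certainly not $\leq h''(0)/2$. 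What saves the day---and this is precisely what the paper exploits---is that you only need the bound along the orbit $\{\theta^*+s\rrho\}$, which lies in $[\tilde\I_0,\tilde\I_0+\w]$ and hence at orthogonal distance at most $\rr=\exp(-9b\ee)$ from the line $[\theta_0,\theta_0+\w]$. Plugging $|\eps|\leq\rr$ into \eqref{eq: d 2 Delta g} gives $\d_\Delta^2 g(\theta^*+s\rrho)\leq \rr^2\cdot(\text{bounded})+h'(|\theta^*+s\rrho-\overline\theta|)/|\theta^*+s\rrho-\overline\theta|$, and the last term is bounded above by a negative constant $-\gamma_1$ on the compact set $\overline{B_{\RR_0}(\overline\theta)}$ because $h'\!\!\restriction_{(0,\RR)}<0$. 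For large $b$ the $\rr^2$ term is negligible and one obtains $\d_\Delta^2 g<-\gamma_1$ along the orbit in $B_{\RR_0}$, with no need to shrink $\RR_0$.

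\textbf{Second gap: the annular contribution.} When you pass from the full integral $\int_0^{t-t_1}(-\beta b/(1-b^{-1/2}))\d_\Delta^2 g(\theta^*+s\rrho)\exp(\ldots)\,ds$ to the restriction over $\{\theta^*+s\rrho\in B_{\RR_0}(\overline\theta)\}$, you silently drop the piece over $B_\RR(\overline\theta)\setminus B_{\RR_0}(\overline\theta)$. That piece is not automatically non-negative: near $\d B_\RR(\overline\theta)$ one has $h'(|\theta-\overline\theta|)/|\theta-\overline\theta|\to 0$, so the $\eps^2$ term in \eqref{eq: d 2 Delta g} can make $\d_\Delta^2 g$ slightly positive, giving a negative contribution to your integral. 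The paper handles this by observing that on the orbit in the annulus, $\d_\Delta^2 g\leq\gamma_2\rr^2$ for a constant $\gamma_2$ (the $|\theta-\overline\theta|$ in the denominators is bounded below by $\RR_0$), whence the annular contribution is at worst of order $b\,\rr^2\exp(5b\ee)=b\exp(-13b\ee)$, which is indeed negligible---but this has to be said.

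Both gaps are easily closed once you replace ``continuity near $\overline\theta$'' by the explicit formula \eqref{eq: d 2 Delta g} together with the smallness of the orthogonal distance $\rr$ along the orbit; this is exactly the mechanism the paper uses.
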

\begin{proof}[Proof of the claim]
As $h'\!\!\restriction_{(0,\RR)}<0$ and $\d_\Delta^2 g\left(\overline\theta\right)<0$, we see by means of
\eqref{eq: d 2 Delta g} that there is $\gamma_1>0$ such that for sufficiently large $b$ we have
$\d^2_\Delta g<-\gamma_1$ on $B_{\RR_0}\left(\overline\theta\right)\cap[\tilde \I_0,\tilde\I_0+\w]$, where 
$\RR_0>0$ is as in Claim~\ref{claim: int exp (int F(xi)) is small}.
Let 
\[
\gamma_2=\max_{\theta\in\T^D\setminus B_{\RR_0}\left(\overline\theta\right)} h''(|\theta-\overline\theta|)/|\theta-\overline\theta|^2-h'(|\theta-\overline\theta|)/|\theta-\overline\theta|^3\geq0
\]
and observe--again by means of \eqref{eq: d 2 Delta g}--that
$\d^2_\Delta g\leq\gamma_2\rr^2$ on
$\left(B_{\RR}\left(\overline\theta\right)\setminus B_{\RR_0}\left(\overline\theta\right)\right)
\cap[\tilde \I_0,\tilde\I_0+\w]$.

For $\theta$, $x$, and $t$ as in the hypothesis, we thus have
\begin{align}
\nonumber
& \int_0^{t-t_1}\!
(\d_\Delta^2 g)(\theta+[s+t_1(\theta)]\rrho)
\exp\left(\int_s^{t-t_1}\!
(\d_x F_\beta)\left(\theta+[\tau+t_1(\theta)]\rrho, \xi_\beta[\tau+t_1(\theta),\theta,x]\right)\, d\tau\right)
\, ds\\
\begin{split}\label{eq: upper bound on d 2 g}
&\leq\int_0^{t-t_1}\!
\left(\gamma_2 \rr^2 \mathbf{1}_{B_\RR\left(\overline\theta\right)\setminus B_{\RR_0}\left(\overline\theta\right)}(\theta+[s+t_1(\theta)]\rrho)-\gamma_1\mathbf{1}_{B_{\RR_0}\left(\overline\theta\right)}(\theta+[s+t_1(\theta)]\rrho)\right)
\\
&\phantom{=}\, \cdot\exp\left(\int_s^{t-t_1}\!
(\d_x F_\beta)\left(\theta+[\tau+t_1(\theta)]\rrho, \xi_\beta[\tau+t_1(\theta),\theta,x]\right)\, d\tau\right)
\, ds
\end{split}
\\
\nonumber
&\leq \gamma_2 \rr^2 \exp(5b\delta_1)-\gamma_1\exp(b\delta_2/2)\leq -\gamma_3\exp(b\delta_2/2)
\end{align}
for some $\gamma_3>0$, where we used \eqref{eq: upper bound exp int} and 
\eqref{eq: lower bound exp int} in the second to last step (recall that $\rr=\exp(-9b\ee)$).

Now, plugging \eqref{eq: d vartheta' xi} and \eqref{eq: upper bound exp int} into \eqref{eq: d2 theta xi} 
(observe that the term with the mixed derivatives of $F_\beta$ vanishes for \eqref{eq: radial x-square family}) yields for each $t\in[t_2,t_3]$ that
\begin{align*}
&\left|\left(\d_{\Delta}^2 \xi_\beta\right)\left(t-t_1,\theta+t_1(\theta)\rrho,\xi_\beta(t_1(\theta),\theta,x)\right)\right|
\\ 
&=\left|\int_0^{t-t_1}\!
\left[(\d^2_x F_\beta)(\theta+[s+t_1(\theta)]\rrho, 
\xi_\beta[s+t_1(\theta),\theta,x]) 
\left((\d_{\Delta} \xi_\beta)[s,\theta+t_1(\theta)\rho,\xi_\beta(t_1(\theta),\theta,x)]\right)^2
\right.\right.
\\
&\phantom{=}
\left.\vphantom{\left(\xi_\beta[s+t_1(\theta),\theta,x]\right)^2}
+(\d^2_{\Delta} F_\beta)\left(\theta+[s+t_1(\theta)]\rrho, \xi_\beta[s+t_1(\theta),\theta,x]\right)
\right]
\\
&\phantom{=}
\left.\left.\vphantom{\left(\xi_\beta[s,\theta+t_1(\theta)\rrho,x]\right)^2}\right.
\cdot
\exp\left(\int_s^{t-t_1}\!
(\d_x F_\beta)\left(\theta+[\tau+t_1(\theta)]\rrho, \xi_\beta[\tau+t_1(\theta),\theta,x]\right)\, d\tau\right)
\, ds \right |\\
&\geq 
-2 b \exp(17b\ee) \rr^2-\beta b/(1-b^{-1/2})
\\
&\phantom{=}
\cdot\!\int_0^{t-t_1}\!\!\!\!
(\d_\Delta^2 g)(\theta+[s+t_1(\theta)]\rrho)
\exp\left(\int_s^{t-t_1}\!\!\!
(\d_x F_\beta)\left(\theta+[\tau+t_1(\theta)]\rrho, \xi_\beta[\tau+t_1(\theta),\theta,x]\right)\, d\tau\right)
\, ds
\\
&\geq -2 b \exp(17b\ee) \rr^2+\gamma_3 \beta b/(1-b^{-1/2})\exp(b \eee/2)
\end{align*}
which is bigger than $\exp(b\eee/2)$ for sufficiently large $b$, where
 we used \eqref{eq: upper bound on d 2 g} in the last step.
\end{proof}
Thus, the assumptions of Claim~\ref{claim: u(theta)} are met and it remains to show that
$\d_\vartheta^2\tilde\xi_{\beta,\theta}(x)>\exp(b\delta_2/4)$ for $x\in C$, $\vartheta\in\mathbb S^{d-1}$, and $\theta\in\mc J_{0,\beta}$.
Plugging \eqref{eq: dx xi <= 1} into \eqref{eq: d2 x xi}, yields
\begin{align*}
\left|\left(\d_x^2 \xi_\beta\right)\left(t_3-t_1,\theta+ t_1(\theta)\rrho,\xi_\beta[t_1(\theta),\theta,x]\right)\right|\leq 2b/\rrho_D.
\end{align*}
Analogously, with 
\eqref{eq: dx dvartheta xi} and \eqref{eq: d vartheta' xi} we get
\begin{align*}
\left|\left(\d_{\Delta} \d_x \xi_\beta\right)\left(t_3-t_1,\theta+ t_1(\theta)\rrho,\xi_\beta[t_1(\theta),\theta,x]\right)\right|\leq 2b/\rrho_D\cdot \exp(6b\ee) \rr.
\end{align*}
Finally, note that
\begin{align}\label{eq: d2 vartheta xi with vartheta in T d}
 \d_\vartheta^2 \xi_\beta(t_1(\theta),\theta,x)
 =
\left(\d^2_\vartheta \xi_\beta\right)(t_1(\theta),\theta,x)+
2\left(\d_t\d_\vartheta \xi_\beta\right)(t_1(\theta),\theta,x)\cdot  \d_\vartheta t_1(\theta)
+\d^2_t \xi_\beta( t_1(\theta),\theta,x)\cdot  (\d_\vartheta t_1(\theta))^2,
\end{align}
where we used the fact that $\d^2_\vartheta t_1(\theta)=0$. By means of \eqref{eq: d theta xi}, we have that $\d_\vartheta \xi_\beta(\tau,\theta,x)=0$ for all $\tau \in [0,1/\rrho_D-\ee]$ so that both
$\left(\d^2_\vartheta \xi_\beta\right)(t_1(\theta),\theta,x)$ and
$\left(\d_t\d_\vartheta \xi_\beta\right)(t_1(\theta),\theta,x)\cdot  \d_\vartheta t_1(\theta)$ vanish.
Further,
\begin{align}\label{eq: d 2 t xi (t i (theta),...)}
\begin{split}
 \d^2_t \xi_\beta(t_1(\theta),\theta,x)&
=
\d_x F_\beta(\theta+t_1(\theta)\rrho,\xi_\beta(t_1(\theta),\theta,x))
\d_t\xi_\beta(t_1(\theta),\theta,x)
\\
&= -2b\cdot\xi_\beta(t_1(\theta),\theta,x)\cdot \d_t\xi_\beta(t_1(\theta),\theta,x)
\end{split}
\end{align}
where we used that $\textrm{d}_\theta F_{\beta}(\theta+t\rrho,x)=0$
for all $t\in[0,1/\rrho_D-\ee]$ in the first step.
Since $\xi_\beta(t_1(\theta),\theta,x)\leq1+c$ and due to \eqref{eq: d t xi beta( t1(theta),theta,x)}, we hence get
\begin{align}
 |\d_\vartheta^2 \xi_\beta(t_1(\theta),\theta,x)| \leq2 \kappa^2 (1+c)^3 b^2.
\end{align}

We are now in a position to derive a lower bound on the second derivative of 
$\tilde \I_0\times C \ni(\theta,x) \mapsto\xi_\beta(t_3(\theta),\theta,x)=\xi_\beta(t_3-t_1,\theta+t_1(\theta)\rrho,\xi_\beta[t_1(\theta),\theta,x])$ in direction of $\vartheta$. From \eqref{eq: d vartheta xi on the interesting section}, we get
\begin{align*}
 &\d_\vartheta^2 \xi_\beta\left(t_3(\theta),\theta,x\right)
\\
&=
|\vartheta+\d_\vartheta t_1(\theta)\rrho|^2 \cdot \left(\d_{\Delta}^2 \xi_\beta\right)\left(t_3-t_1,\theta+t_1(\theta)\rrho,\xi_\beta(t_1(\theta),\theta,x)\right)
\\
&\phantom{=} +2|\vartheta+\d_\vartheta t_1(\theta)\rrho|\left(\d_{\Delta}\d_x \xi_\beta\right)\left(t_3-t_1,\theta+t_1(\theta)\rrho,\xi_\beta( t_1(\theta),\theta,x)\right)\cdot 
\d_\vartheta \xi_\beta(t_1(\theta),\theta,x)
\\
&\phantom{=} +\left(\d_x^2 \xi_\beta\right)\left(t_3-t_1,\theta+ t_1(\theta)\rrho,\xi_\beta(t_1(\theta),\theta,x)\right)\cdot 
\left(\d_\vartheta \xi_\beta(t_1(\theta),\theta,x)\right)^2
\\
&\phantom{=} +
 \left(\d_x \xi_\beta\right)\left(t_3-t_1,\theta+t_1(\theta)\rrho,\xi_\beta( 
t_1(\theta),\theta,x)\right)\cdot 
\d^2_\vartheta \xi_\beta(t_1(\theta),\theta,x).
\end{align*}
By the above computations and in particular from Claim~\ref{claim: second derivatives}, we see that for large enough $b$ the leading term is the one
containing
$\left(\d_{\Delta}^2 \xi_\beta\right)\left(t_3-t_1,\theta+ t_1(\theta)\rrho,\xi_\beta(t_1(\theta),\theta,x)\right)$.
This yields
\begin{align}\label{eq: lower bound d vartheta 2 at t2}
 \left|\d_\vartheta^2 \xi_\beta\left(t_3(\theta),\theta,x\right)\right|\geq 
\exp(b\eee/3)
\end{align}
for large enough $b$.
Now, let us consider the derivatives $\d_\vartheta^2 \tilde \xi_{\beta,\theta}(x)$. 
Analogously to \eqref{eq: d vartheta xi on the interesting section}, we get
\begin{align*}
 \d_\vartheta \tilde \xi_{\beta,\theta}(x)&= \d_\vartheta \xi_{\beta}\left(1/\rrho_D-t_3(\theta),\theta+t_3(\theta)\rrho,\xi_\beta(t_3(\theta),\theta,x)\right)\\
&=
-\d_t \xi_{\beta}\left(1/\rrho_D-t_3(\theta),\theta+t_3(\theta)\rrho,\xi_\beta(t_3(\theta),\theta,x)\right)\cdot
\d_\vartheta t_3(\theta)
\\
&\phantom{=}+
|\vartheta+\d_\vartheta t_3(\theta)\rrho| \cdot \left(\d_{\Delta} \xi_\beta\right)\left(1/\rrho_D-t_3(\theta),\theta+t_3(\theta)\rrho,\xi_\beta(t_3(\theta),\theta,x)\right)
\\&\phantom{=}+
 \left(\d_x 
\xi_\beta\right)\left(1/\rrho_D-t_3(\theta),\theta+t_3(\theta)\rrho,\xi_\beta(t_3(\theta),\theta,x)\right)\cdot \d_\vartheta \xi_\beta(t_3(\theta),\theta,x)
\\&=
-\d_t \xi_{\beta}\left(1/\rrho_D-t_3(\theta),\theta+t_3(\theta)\rrho,\xi_\beta(t_3(\theta),\theta,x)\right)\cdot
\d_\vartheta t_3(\theta)
\\&\phantom{=}+
 \left(\d_x 
\xi_\beta\right)\left(1/\rrho_D-t_3(\theta),\theta+t_3(\theta)\rrho,\xi_\beta(t_3(\theta),\theta,x)\right)\cdot \d_\vartheta \xi_\beta(t_3(\theta),\theta,x),
\end{align*}
where we used that $F_\beta(\theta+t_3(\theta)\rrho+\tau,\cdot)=0$ for all $\tau\in[0,1/\rrho_D-t_3(\theta)]$ and $\theta\in\tilde \I_0$ in the last step.
By differentiating this expression once more, we straightforwardly obtain
\begin{align*}
\d_\vartheta^2 \tilde \xi_{\beta,\theta}(x)&=
\d_t^2 \xi_{\beta}\left(1/\rrho_D-t_3(\theta),\theta+t_3(\theta)\rrho,\xi_\beta(t_3(\theta),\theta,x)\right)\cdot
(\d_\vartheta t_3(\theta))^2\\
&\phantom{=}-
2(\d_t\d_x \xi_{\beta})\left(1/\rrho_D-t_3(\theta),\theta+t_3(\theta)\rrho,\xi_\beta(t_3(\theta),\theta,x)\right)\cdot
\d_\vartheta t_3(\theta)\cdot\d_\vartheta \xi_\beta(t_3(\theta),\theta,x)
\\&\phantom{=}+
 \left(\d_x^2 
\xi_\beta\right)\left(1/\rrho_D-t_3(\theta),\theta+t_3(\theta)\rrho,\xi_\beta(t_3(\theta),\theta,x)\right)\cdot \left(\d_\vartheta \xi_\beta(t_3(\theta),\theta,x)\right)^2
\\&\phantom{=}+
 \left(\d_x 
\xi_\beta\right)\left(1/\rrho_D-t_3(\theta),\theta+t_3(\theta)\rrho,\xi_\beta(t_3(\theta),\theta,x)\right)\cdot \d_\vartheta^2 \xi_\beta(t_3(\theta),\theta,x).                                      
\end{align*}
Let us discuss why 
$ (\d_x 
\xi_\beta)(1/\rrho_D-t_3(\theta),\theta+t_3(\theta)\rrho,\xi_\beta[t_3(\theta),\theta,x])\cdot \d_\vartheta^2 \xi_\beta(t_3(\theta),\theta,x)$ is the leading term. 
To that end, note that since $\xi_\beta(\tau,\theta+t_3(\theta)\rrho,\xi_\beta[t_3(\theta),\theta,x])<0$ for all $\tau\in[0,1/\rrho_D-t_3(\theta)]$ and $\theta\in \mc J_{0,\beta}$, we have $ (\d_x \xi_\beta)(1/\rrho_D-t_3(\theta),\theta+t_3(\theta)\rrho,\xi_\beta(t_3(\theta),\theta,x))\geq1$.
Together with \eqref{eq: lower bound d vartheta 2 at t2}, this eventually finishes the proof if we can show that the remaining terms are indeed negligible.

By an analogous computation as in \eqref{eq: d 2 t xi (t i (theta),...)}, we see
\begin{align*}
 \left|\d_t^2 \xi_{\beta}\left(1/\rrho_D-t_3(\theta),\theta+t_3(\theta)\rrho,\xi_\beta(t_3(\theta),\theta,x)\right)\right|&\leq 2b\cdot|\xi_\beta(1/\rrho_D,\theta,x)\cdot \d_t\xi_\beta(1/\rrho_D,\theta,x)|\\
&\leq 16b^2,
\end{align*}
where we used that
$\xi_{\beta}(1/\rrho_D,\theta,x)\geq-2$ (see Proposition~\ref{prop: expansion in E contraction in C} \eqref{label: claim a}) in the last step.
Further,
\begin{align*}
\left(\d_x 
\xi_\beta\right)\left(1/\rrho_D-t_3(\theta),\theta+t_3(\theta)\rrho,\xi_\beta(t_3(\theta),\theta,x)\right)\leq \exp(4b(1/\rrho_D-t_3))
\end{align*}
so that by putting $t_3$ close enough to $1/\rrho_D$ (which is possible if we assume large enough $b$) we get small enough upper bounds on $(\d_t\d_x \xi_{\beta})(1/\rrho_D-t_3(\theta),\theta+t_3(\theta)\rrho,\xi_\beta(t_3(\theta),\theta,x))$
(see \eqref{eq: d x fibre ode}) as well as
$(\d^2_x \xi_\beta)(1/\rrho_D-t_3(\theta),\theta+t_3(\theta)\rrho,
\xi_\beta(t_3(\theta),\theta,x))\cdot (\d_\vartheta \xi_\beta(t_3(\theta),\theta,x))^2$
(see \eqref{eq: d2 x xi} and \eqref{eq: d vartheta xi on the interesting section estimate}).
\end{proof}
There are six more assumptions on ${\tilde\Xi}_{\beta}$ to be considered.
These basically boil down to some weak upper bounds on further derivatives of the first return maps and their inverses.

Let $S>0$ be such that
\resume{enumerate}[{[${(\mathcal A}1)$]}]
\item $\left|\d_\vartheta {\tilde\xi}_{\beta,\theta}(x)\right|<S$
for all $\vartheta \in \mathbb S^{d-1}$ and $(\theta,x)\in\Gamma\cap {\tilde\Xi}_\beta^{-1}(\Gamma)$;
\label{axiom: 11c}
\item $\left|\d_\vartheta^2 {\tilde\xi}_{\beta,\theta}(x)\right|<S^2$
for all $\vartheta \in \mathbb S^{d-1}$ and $(\theta,x)\in\Gamma\cap {\tilde\Xi}_\beta^{-1}(\Gamma)$;
\label{axiom: 12c}
\item $ \left|\d_\vartheta\d_x {\tilde\xi}_{\beta,\theta}(x)\right|<
\begin{cases}
S \alpha_c & \text{for } (\theta,x) \in \T^d\times C \\
S\alpha_u^2 & \text{for } (\theta,x)\in\Gamma\cap {\tilde\Xi}_\beta^{-1}(\Gamma)
\end{cases}$ for each $\vartheta \in \mathbb S^{d-1}$. \label{axiom: 13c}
\suspend{enumerate}
Equations \eqref{eq: d theta xi}, \eqref{eq: d2 theta xi} and \eqref{eq: dx dvartheta xi} yield that a possible choice to ensure ${(\mathcal A}\ref{axiom: 11c})$--${(\mathcal A}\ref{axiom: 13c})$ for $\tilde\xi_{\beta,\theta}$ is to set $S=\exp(9b\ee)$. In case of
${(\mathcal A}\ref{axiom: 11c})$, this can be seen from
\begin{align*}
& \left|\d_\vartheta \tilde\xi_{\beta,\theta}(x)\right|\\
&\stackrel{\eqref{eq: d theta xi}}{\leq}
\int_0^{1/\rrho_D} \left|(\d_\vartheta F_\beta)\left(s\rrho+\theta, \xi_\beta(s,\theta,x)\right)\right|
\exp\left(\int_s^{1/\rrho_D}\!
(\d_x F_\beta)\left(\tau\rrho+\theta, \xi_\beta(\tau,\theta,x)\right)\, d\tau\right)
\, ds\\
&\stackrel{\phantom{\eqref{eq: d theta xi}}}{\leq} \ee b/(1-b^{-1/2})\cdot \max_{\theta\in\T^D}|\d_\vartheta g(\theta)|\exp\left(2b\ee\right),
\end{align*}
 where we used that $\d_\vartheta F_\beta$ vanishes for $s<1/\rrho_D-\ee$ and that $\xi_\beta(\tau,\theta,x)\geq-1$ for each $\tau\in[0,1/\rrho_D]$ and $(\theta,x)\in{\tilde\Xi}_\beta^{-1}(\Gamma)$ due to Proposition~\ref{prop: radial basic observations} (\ref{label a prop radial basic observations}).
 However, for big enough $b$, this expression is certainly smaller than $\exp(9b\ee)$. 
${(\mathcal A}\ref{axiom: 12c})$ and ${(\mathcal A}\ref{axiom: 13c})$ can be seen in a similar fashion.
Finally, we need that
\resume{enumerate}[{[${(\mathcal A}1)$]}]
\item $\left|\d_x^2 {\tilde\xi}_{\beta,\theta}(x)\right|<
\begin{cases}
\alpha_c & \text{for } (\theta,x)\in \T^d\times C \\
\alpha_u^2 & \text{for } (\theta,x)\in \Gamma\cap {\tilde\Xi}_\beta^{-1}(\Gamma),
\end{cases}$ \label{axiom: 14c}
\suspend{enumerate}
which is true due to, in particular, \eqref{eq: d2 x xi} and Proposition~\ref{prop: expansion in E contraction in C}.

There are two more assumptions left which deal with the inverse of ${\tilde\xi}_{\beta,\theta}$.
\resume{enumerate}[{[${(\mathcal A}1)$]}]
\item $\left|\d_x^2 {\tilde\xi}_{\beta,\theta}^{-1}(x)\right|< \alpha_e^{-1}$
for each
$\theta\notin \I_{0}+\w$ and $x\in E$;\label{axiom: 15c}
\item $\left|\d_\vartheta\d_x {\tilde\xi}_{\beta,\theta}^{-1}(x)\right|< S \alpha_e^{-1}$
for each $\theta\notin \I_{0}+\w, x\in E$ and $\vartheta \in \mathbb S^{d-1}$.\label{axiom: 16c}
\end{enumerate}
Observe that $\tilde \xi^{-1}_{\beta,\theta}=\tilde {\xi^-}_{\beta,\theta}$ (see \eqref{eq: xi inverse} and \eqref{eq: inverse non-autonomous vectorfield}). 
Hence, we can derive the desired estimates for $\tilde \xi^-_{\beta,\theta}$ by means of \eqref{eq: d2 x xi} and \eqref{eq: dx dvartheta xi} if we replace $F_\beta$ by $F_\beta^-$ and $\rrho$ by $\rrho^-=-\rrho$.
Under the assumption of $x\in E$ and $\theta\notin \I_{0,\beta}+\rrho$, we have
that $\xi^-_{\beta}(t,\theta,x)\in E$ for all $t\in[0,1/\rrho_D]$ and hence
$\d_x\xi^-_{\beta}(t,\theta,x)\leq \exp(-2b(1-\exp[-b/(2\rrho_D)])\cdot t)$. Thus, 
${(\mathcal A}\ref{axiom: 15c})$ follows immediately for large enough $b$.
${(\mathcal A}\ref{axiom: 16c})$ follows directly from the fact that $\d_\vartheta F^-_\beta(t\rrho^-+\theta,x)=0$ and hence $\d_\vartheta\xi^-_\beta(t,\theta,x)=0$ for 
$\theta\notin \I_{0,\beta}+\rrho$ and $t\in[0,1/\rrho_D]$.

\subsection{Occurrence of a non-smooth bifurcation}
We are now in a position to recast Theorem~\ref{thm: existence of sna discrete time} by spelling out the definition of $\mc U_\w(\X)$
in a way adapted to the first return maps ${\tilde\Xi}_{\beta}$ corresponding to \eqref{eq: radial x-square family}.

Given $\alpha,p>1$ and $K\in\N$, set $q=1-1/K$ and
\begin{align*}
 \nu=s-\kappa\left(\alpha,q\right)S^2\alpha^{-(2q^2/p-5(1-q^2)p)},
\end{align*}
where $\kappa(\alpha,q)$ is decreasing\footnote{Here, we only need the decreasing behaviour of the constant $\kappa$. 
The interested reader is referred to \cite[{Lemma~4.2.13}]{phdfuhrmann2015} for further details.}
in both $\alpha$ and $q$ and $s$ is the lower bound in $(\mc A\ref{axiom: 9c})$.
Our reformulation of Theorem~\ref{thm: existence of sna discrete time} reads as follows.
\begin{thm}[{cf. \cite[Theorem~4.18]{fuhrmann2014} and \cite[Theorem~4.2.15]{phdfuhrmann2015}}]\label{thm: sink-source orbit refined}
Suppose $\w\in \T^d$ is Diophantine of type $(\mathscr C',\eta)$, $\X\ssq\R$ is some 
non-degenerate interval and ${(\tilde\xi_\beta)}_{\beta\in [0,1]}$ lies in $\mc P_\w(\X)$ and satisfies $(\A\ref{axiom: 1c})$--$(\A\ref{axiom: 16c})$.
Let there be $p\geq \sqrt{2}$ and $\alpha>1$ with
\begin{align*}
 \alpha_c^{-1}=\alpha_e \geq \alpha^{2/p}, \qquad \alpha_l^{-1}=\alpha_u \leq \alpha^p.
\end{align*}
Further, assume $3|\I_0|<\mathscr C' (2KM)^{-\eta}$ for some integers $M$ not smaller than $2$ and $K$ such that $2q^2/p-5(1-q^2)p>0$ and assume 
$\nu>0$ as well as $\alpha>\alpha_0$, where $\alpha_0=\alpha_0(\nu,K,M,p,|C|,|E|,\eta,\mathscr C')$.
Then there is $\beta_c \in [0,1]$ 
such that $\tilde\xi_{\beta_c}$ has an SNA and an SNR.
\end{thm}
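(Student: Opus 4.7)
The plan is to reduce the statement to the original result \cite[Theorem~4.18]{fuhrmann2014} by matching our list $(\mc A\ref{axiom: 1c})$--$(\mc A\ref{axiom: 16c})$ with the assumptions defining $\mc U_\w(\X)$ there. The hypothesis ${(\tilde \xi_\beta)}_{\beta\in[0,1]}\in \mc P_\w(\X)$ puts us in the setting of qpf monotone interval maps from Theorem~\ref{thm: existence of sna discrete time}. The quantitative assumptions on $\alpha_c,\alpha_e,\alpha_l,\alpha_u,S,s$ are precisely what is needed to invoke the sink-source argument of \cite{fuhrmann2014}. Hence I would first identify the data $(p,\alpha,K,M,\nu)$ here with the parameters of the cited theorem, note that the Diophantine condition on $\w$ together with $3|\mc I_0|<\mathscr C'(2KM)^{-\eta}$ is the slow-recurrence condition used there, and invoke the theorem verbatim.

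The mechanism driving the proof in \cite{fuhrmann2014} (which I would only sketch here) is the following. Conditions $(\mc A\ref{axiom: 4c})$ and the first half of $(\mc A\ref{axiom: 5c})$ guarantee that $\tilde\xi_0$ has two continuous invariant graphs in $\Gamma$, one contained in $\T^d\times C$ (attracting, by $(\mc A\ref{axiom: 1c})$) and one in $\T^d\times E$ (repelling, by $(\mc A\ref{axiom: 2c})$). By $(\mc A\ref{axiom: 6c})$ and the second half of $(\mc A\ref{axiom: 5c})$, these two graphs approach each other monotonically as $\beta$ grows, and the intermediate-value principle of Theorem~\ref{thm: saddle-node} produces a critical parameter $\beta_c$. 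The question is whether the collision is smooth or non-smooth. Non-smoothness is forced by the sink-source construction: using the Diophantine returns of the base rotation, one builds a positive-measure set of base points on which the orbit visits the critical region $\mc I_0$ only after accumulating strong contraction through $C$ (controlled by $\alpha_c$) and a positive-measure complementary set on which it accumulates strong expansion through $E$ (controlled by $\alpha_e$). The bounds $\alpha_e\geq\alpha^{2/p}$, $\alpha_u\leq\alpha^p$ and the smallness of $|\mc I_0|$ with respect to $(KM)^{-\eta}$ make these estimates compatible.

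The most delicate point, and the one where $\nu>0$ and the derivative bounds $(\mc A\ref{axiom: 11c})$--$(\mc A\ref{axiom: 14c})$ together with the inverse-map bounds $(\mc A\ref{axiom: 15c})$--$(\mc A\ref{axiom: 16c})$ enter, is to control how the two invariant graphs meet. The quadratic lower bound $(\mc A\ref{axiom: 9c})$ with coefficient $s$ on the critical set $\mc J_{0,\beta}$ competes with the distortion coming from the contraction/expansion gap; the quantity $\nu=s-\kappa(\alpha,q)S^2\alpha^{-(2q^2/p-5(1-q^2)p)}$ expresses exactly that the concavity of the fibre maps over the critical region dominates this distortion. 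Positivity of $\nu$ then guarantees that on a residual $\Leb$-null set the graphs coincide, while on a full-measure set they remain separated, giving an SNA--SNR pair at $\beta_c$.

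The main obstacle is thus not the existence of $\beta_c$, which is given by Theorem~\ref{thm: saddle-node}, but the quantitative interplay between the second-order bound $s$, the distortion constant $S$, the hyperbolicity rates, and the Diophantine constant $\mathscr C'$. Since all these relations are already compiled into the single condition $\nu>0$ together with the smallness of $|\mc I_0|$ and the lower bound $\alpha>\alpha_0$, the proof reduces to checking, line by line, that our formulation of $(\mc A\ref{axiom: 1c})$--$(\mc A\ref{axiom: 16c})$ (adapted to the Poincar\'e map $\tilde\xi_\beta$) is equivalent to the original list in \cite[Theorem~4.18]{fuhrmann2014}. This is routine but bookkeeping-heavy and the reader is referred to \cite{phdfuhrmann2015} for the detailed translation.
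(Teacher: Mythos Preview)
Your proposal is correct and matches the paper's treatment: this theorem is not proved in the paper at all but is stated as a reformulation of \cite[Theorem~4.18]{fuhrmann2014} (with details deferred to \cite{phdfuhrmann2015}), and your reduction-by-identification-of-hypotheses is precisely that. If anything, your sketch of the sink-source mechanism and the role of $\nu$ goes beyond what the paper itself supplies, which is simply the citation.
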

\begin{rem}
 Here, $|C|$ and $|E|$ denote the length of the intervals of contraction and expansion, respectively. 
It is important to mention that $\alpha_0$ can be chosen to be non-increasing in $\nu$ and non-decreasing in $|C|$ and $|E|$ for fixed $K$, $M$, $p$, $\eta$ and $\mathscr C'$.
\end{rem}

We want to show that ${({\tilde\Xi}_{\beta})}_{\beta\in[0,1]}$ verifies the hypothesis of Theorem~\ref{thm: sink-source orbit refined} if $R\geq \mathscr R$ (for some 
$\mathscr R=\mathscr R(|\rho|,\mathscr C,\eta)$) and $b$ is large enough.
It is straightforward to see that ${({\tilde\Xi}_{\beta})}_{\beta\in[0,1]}\in\mc P_\w(\R)$ and that $\w$ is Diophantine (cf. Section~\ref{sec: poincare map}).
Now, assume that $c$ and $\ee$ are small enough\footnote{In the case of $\ee$, this essentially amounts 
to assuming small enough $\RR$.} so that
\[2b(1-c) (1/\rrho_D-\ee)-10b\ee>b(1+c)/\rrho_D.\]
Then, setting $\alpha=\exp(b(1+c)/\rrho_D)$ and $p=2$ ensures 
$\alpha_c^{-1}=\alpha_e \geq \alpha^{2/p}$ and $\alpha_l^{-1}=\alpha_u = \alpha^p$.
We have just seen in this section that
$(\mc A\ref{axiom: 1c})$--$(\mc A\ref{axiom: 16c})$
are verified by ${({\tilde\Xi}_{\beta})}_{\beta\in[0,1]}$.
In fact, observe that $(\mc A\ref{axiom: 1c})$--$(\mc A\ref{axiom: 16c})$ still hold when we set the lower bound of the expanding interval $E$ to be $-1-\eps$ (for some sufficiently small $\eps=\eps(b)>0$) instead of $-1$.
Note further that we can choose $\alpha$ as big as we need by assuming large enough $b$.\footnote{Which amounts to assuming big $K$ and hence, small $\RR$.}

In Theorem~\ref{thm: sink-source orbit refined}, we moreover assume that
\begin{align}\label{eq: I 0 is small in continuous time}
3|\I_0|<\mathscr C' (2KM)^{-\eta} 
\end{align}
for some positive integers $M$ not smaller than $2$ and $K$ such that $q=1-1/K$ satisfies $2q^2/p-5(1-q^2)p=q^2-10(1-q^2)>0$.
Observe that--given some $M\in\N_{\geq 2}$ and such $K$--\eqref{eq: I 0 is small in continuous time} holds true under the assumption of small enough $\RR$ (independent of $b$).

Finally, we need
\begin{align*}
 \nu=s-\kappa\left(\alpha,q\right)S^2\alpha^{-(q^2-10(1-q^2))}
\end{align*}
to be positive.
Now, with $S$ as above and $s>\exp(b\eee/4)$ (cf. Lemma~\ref{lem: nondegenerate bump}), we get
\begin{align*}
 \nu> 
\exp(b\eee/4)-\kappa\left(\exp(b(1+c)/\rrho_D),q\right)
\exp\left(-b(1+c)[q^2-10(1-q^2)]/\rrho_D+18b\ee\right),
\end{align*}
which is positive (for sufficiently large $b$) and increasing in $b$ as long as $q$ is close to $1$ and hence, as long as $\RR$ is small.
Altogether, this shows: for big enough $R$ (independent of $b$ and $h$) and big enough $b$, ${({\tilde\Xi}_{\beta})}_{\beta\in[0,1]}$ verifies the assumptions of Theorem~\ref{thm: sink-source orbit refined}.

Let us fix a Diophantine $\rho\in\R^D$ and only consider families of flows $\hat{\Xi}$ driven by $(t,\theta)\mapsto t\cdot\rho+\theta$ in the following.
We define $\U_\rho(\X)$ to be the set of all $\hat F\in\P(\X)$ which generate families $\hat{\Xi}$ with $\hat{\tilde \Xi}\in \mc U_\w(\X)$, that is,
$\hat{\tilde\Xi}$ verifies the assumptions of Theorem~\ref{thm: sink-source orbit refined}.
From the above, we see that there exists $\hat F\in \U_\rho(\X)$ such that any $\mc C^2$-small perturbation of $\hat{\tilde \Xi}$ still lies in $\mc U_\w(\X)$.
Since $\mc C^2$-small changes of $\hat F$ (recall that we actually consider the modified
vector field [cf. Section~\ref{sec: poincare map}]) result in $\mc C^2$-small changes of $\hat{\tilde \Xi}$ \cite[\S12 Satz~\rom{6}]{walter1976}, this proves that 
$\mc C^2$-small changes of $\hat F\in\U_\rho(\X)$ still lie in $\U_\rho(\X)$.
In other words, Theorem~\ref{thm: main existence continuous time} holds true for $\X=\R$.
In fact, a straightforward adaption of \eqref{eq: radial x-square family} immediately yields 
Theorem~\ref{thm: main existence continuous time} for arbitrary non-degenerate intervals $\X\ssq\R$.

\subsection{Geometry of the invariant graphs}\label{sec: geometry}
To close the discussion of the continuous time case, let us see how Theorem~\ref{t.main discrete} extends to Theorem~\ref{t.main}.
We denote the boundary graphs of the maximal invariant set
$\tilde\Lambda_{\beta_c}$ of $\tilde \Xi_{\beta_c}$ by $\psi^\pm_{\tilde\Lambda_{\beta_c}}$ and those of the maximal invariant set $\Lambda_{\beta_c}$ of $\Xi_{\beta_c}$ by $\phi^\pm_{\Lambda_{\beta_c}}$.
Notice that 
\begin{align}\label{eq: relation invariant graphs discrete and cont time}
\Lambda_{\beta_c}=\Xi_{\beta_c}\left([0,1/\rho_D]\times\tilde\Lambda_{\beta_c}\right)
 \qquad \text{and} \qquad 
\Phi^\pm_{\Lambda_{\beta_c}}=\Xi_{\beta_c}\left([0,1/\rho_D]\times\Psi^\pm_{\tilde\Lambda_{\beta_c}}\right).
 \end{align}
We restrict to $\phi^+_{\Lambda_{\beta_c}}$ since $\phi^-_{\Lambda_{\beta_c}}$ can be dealt with similarly.
The uniqueness of the semi-continuous representatives of $\phi^+_{\Lambda_{\beta_c}}$
and item \eqref{item: 3 geometric continuous} are immediate.
Since $\phi^+_{\Lambda_{\beta_c}}$ and $\phi^-_{\Lambda_{\beta_c}}$ are $\textrm{Leb}_{\T^D}$-almost surely distinct, 
Lemma~\ref{lem: box dimension} gives
$D_B(\Phi^+_{\Lambda_{\beta_c}})=D_B\left(\overline{\Phi^+_{\Lambda_{\beta_c}}}\right)=D+1$.\footnote{We may view $\overline{\Phi^+_{\Lambda_{\beta_c}}}$ as a subset of $\R^{D+1}$.}

For the remaining properties, note that the remark in Section~\ref{sec: skew product maps} implies that
we just have to show that a statement similar to Proposition~\ref{prop: lipschitz} holds true in the continuous time case. 
Let $\tilde\Omega_j$ be as in Proposition~\ref{prop: lipschitz} and
set $\Omega_j=[\tilde\Omega_j,\tilde\Omega_j+\w]$ where $j\in\N$. Observe that 
\[\phi^+_{\Lambda_{\beta_c}}\!\!\restriction_{\Omega_j}\:\Omega_j\ni((\theta,0) +\theta_D/\rho_D\cdot \rho)\mapsto \Xi_{\beta_c}\left(\theta_D/\rho_D,(\theta,0),\psi^+_{\tilde\Lambda_{\beta_c}}(\theta)\right)\]
is Lipschitz continuous.
Finally, consider $\Omega_\infty=\T^D\setminus\bigcup_{j\in\N} \Omega_j\ssq [\tilde\Omega_\infty,\tilde\Omega_\infty+\w]$.
Due to Lemma~\ref{lem: lipschitz image hausdorff dimension}, Theorem~\ref{thm: Hausdorff dimension product sets}, and Proposition~\ref{prop: lipschitz}
\[D_H([\tilde\Omega_\infty,\tilde\Omega_\infty+\w])= D_H(\tilde\Omega_\infty\times[0,1/\rho_D]) \leq d.\]

Due to the monotonicity of the Hausdorff dimension, we hence get $D_H(\Omega_\infty)\leq D_H([\tilde\Omega_\infty,\tilde\Omega_\infty+\w])\leq d$ and thus
we have an analogue to Proposition~\ref{prop: lipschitz}.
This finishes the proof of Theorem~\ref{t.main}.

\bibliography{Literaturnachweis_SNA}{}
\bibliographystyle{plain}
\end{document}